\documentclass[12pt]{article}

\usepackage{color}
\usepackage{mathrsfs}

\usepackage{amssymb, amsthm, amsfonts, amsxtra, amsmath}
\usepackage{latexsym}
\usepackage{mathabx}
\usepackage{bbm} 

\usepackage[all]{xy}
\usepackage{graphics}
\usepackage{latexsym}
\usepackage{makeidx}
\usepackage{rotating}

\oddsidemargin -9mm\textwidth 17truecm
\topmargin -9mm \textheight 22truecm
\theoremstyle{plain}

\newtheorem{Theorem}{Theorem}[section]
\newtheorem{Lem}[Theorem]{Lemma}
\newtheorem{Prop}[Theorem]{Proposition}
\newtheorem{Cor}[Theorem]{Corollary}
\theoremstyle{definition}
\newtheorem{Def}[Theorem]{Definition}
\newtheorem{Not}[Theorem]{Notation}
\newtheorem{Exa}[Theorem]{Example}
\newtheorem{Exas}[Theorem]{Examples}
\newtheorem{Rem}[Theorem]{Remark}
\newtheorem{Rems}[Theorem]{Remarks}

\date{}

\DeclareMathOperator{\Mor}{Mor}
\DeclareMathOperator{\End}{End}
\DeclareMathOperator{\Hom}{Hom}
\DeclareMathOperator{\Rep}{Rep}

\DeclareMathOperator{\Fun}{Fun}
\DeclareMathOperator{\alg}{Alg}
\DeclareMathOperator{\comp}{Comp}
\DeclareMathOperator{\cC}{C}
\DeclareMathOperator{\PW}{P}

\DeclareMathOperator{\triv}{triv}
\DeclareMathOperator{\id}{id}

\newcommand{\SU}{\mathit{SU}} 
\newcommand{\G}{\mathbb{G}}
\newcommand{\X}{\mathbb{X}}
\newcommand{\Y}{\mathbb{Y}}
\newcommand{\C}{\mathbb{C}}

\newcommand{\aA}{\mathscr{A}}

\newcommand{\Hsp}{\mathscr{H}}
\newcommand{\ncl}{\mathrm{n\mhyph cl}}
\newcommand{\Hf}{\cat{H}_\textrm{f}}
\newcommand{\cat}[1]{\mathcal{#1}}
\newcommand{\Hmod}[1]{\mathcal{#1}}
\newcommand{\Co}[1]{C(#1)}
\newcommand{\CoL}[1]{C(#1)} 
\mathchardef\mhyph="2D

\newcommand{\norm}[1]{\left \| #1 \right \|}
\newcommand{\ConPos}{(P)} 

\newcommand{\Circt}{\mathop{\ooalign{$\ovoid$\cr\hidewidth\raise-.05ex\hbox{$\scriptstyle\mathsf T\mkern3.5mu$}\cr}}} 
\newcommand{\smCirct}{\mathop{\ooalign{$\scriptstyle\ovoid$\cr\hidewidth\raise-.05ex\hbox{$\scriptscriptstyle\mathsf T\mkern2.8mu$}\cr}}}  

\usepackage{tensor}
\newcommand{\lu}[2]{\tensor[^#1]{#2}{}}
\newcommand{\ld}[2]{\tensor[_#1]{#2}{}}

\usepackage{parskip} 
\makeatletter 
\def\thm@space@setup{%
  \thm@preskip=\parskip \thm@postskip=0pt
}
\makeatother

\usepackage{caption} 
\usepackage{subcaption} 

\numberwithin{equation}{section}

\newcommand{\texorpdfstring}[2]{#1}

\begin{document}

\hyphenation{Wo-ro-no-wicz}

\title{Tannaka--Kre\u{\i}n duality for compact quantum homogeneous spaces. I. General theory}
\author{K. De Commer\\
\small Department of Mathematics, University of Cergy-Pontoise,\\
\small UMR CNRS 8088, F-95000 Cergy-Pontoise, France\\
\small e-mail: Kenny.De-Commer@u-cergy.fr \\
\and M. Yamashita\\
\small Department of Mathematics, Ochanomizu University\\
\small Otsuka 2-1-1, Bunkyo, 112-8610, Tokyo, Japan\\
\small e-mail: yamashita.makoto@ocha.ac.jp}
\maketitle

\begin{abstract}
\noindent An ergodic action of a compact quantum group $\G$ on an operator algebra $A$ can be interpreted as a quantum homogeneous space for $\G$. Such an action gives rise to the category of finite equivariant Hilbert modules over $A$, which has a module structure over the tensor category $\Rep(\G)$ of finite-dimensional representations of $\G$.  We show that there is a one-to-one correspondence between the quantum $\G$-homogeneous spaces up to equivariant Morita equivalence, and indecomposable module $\cC^*$-categories over $\Rep(\G)$ up to natural equivalence. This gives a global approach to the duality theory for ergodic actions as developed by C. Pinzari and J. Roberts.
\end{abstract}

\emph{Keywords}: compact quantum groups; $\cC^*$-algebras; Hilbert modules; ergodic actions; module categories

AMS 2010 \emph{Mathematics subject classification}: 17B37; 20G42; 46L08

\section*{Introduction}

In the study of compact group actions on topological spaces, homogeneous spaces play a key r\^{o}le as fundamental building blocks. Ever since the foundational works of I.~Gelfand and M.~Neumark, the notion of unital $\cC^*$-algebras is known to be a rich generalization of compact topological spaces, and one frequently interprets them as function algebras on (compact) `quantum spaces'. In this more general noncommutative framework, a generally accepted notion of `compact quantum homogeneous space' for a compact group is that of a continuous ergodic action of the group on a unital $\cC^*$-algebra, that is, an action for which the scalars are the only invariant elements.

In the same way as compact topological spaces are generalized to unital $\cC^*$-algebras, S.L.~Woronowicz~\cite{Wor1,Wor2} generalized the notion of compact topological groups to that of \emph{compact quantum groups}. His axiom system for compact quantum groups is a very simple and natural one involving the coproduct homomorphism dualizing the product map of groups.  The resulting theory turns out to be strikingly rich, but at the same time as structured as the classical one.  As in the classical case, we have the Haar measure, the Peter--Weyl theory and the Tannaka--Kre\u{\i}n duality (\cite{Wor2,Wor4,Joy1}).

One may also formulate the notion of actions of compact quantum groups on quantum spaces, in a way which respects the Gelfand--Neumark duality when applied to the continuous map $G \times X \rightarrow X$ defining a classical group action. In this framework there is also a natural candidate for the `quantum homogeneous spaces' over compact quantum groups, by using the formalism of ergodic (co)actions~\cite{Pod1,Boc1}. In this paper, we aim to characterize such quantum homogeneous spaces in the spirit of the Tannaka--Kre\u{\i}n duality.

Such a duality theory for ergodic actions was already developed in~\cite{Pin1}, where the notion of quasi-tensor functor, a special kind of isometrically lax functor, was used. For practical purposes however, the lack of a strong tensor structure on such a functor makes it difficult to let algebra run its course in computations, due to the appearance of extraneous projections as stumbling blocks. Taking a cue from the theory of fusion categories, we rather formulate a duality theory in terms of \emph{module $\cC^*$-categories} over the tensor $\cC^*$-category of finite-dimensional representations of $\G$. Indeed, module categories over fusion categories are known to correspond to a good generalized notion of subgroup/homogeneous space (see A. Ocneanu's pioneering work in the subfactor context~\cite{Ocn1}, and more recent developments in the purely algebraic framework~\cite{And1,Ost1,Gel1}).

Module $\cC^*$-categories can equivalently, and more concretely, be described in terms of \emph{tensor functors into a category of bi-graded Hilbert spaces}. This formulation then makes at the same time the connection with the `fiber functor theory' from~\cite{BDV1}, which corresponds to non-graded Hilbert spaces and ergodic actions of \emph{full quantum multiplicity}, and with the theory of~\cite{Pin1}, which corresponds to considering one particular component of such a graded tensor functor. In the purely algebraic setting, such bi-graded tensor functors also lead to the construction of weak Hopf algebras, i.e.~quantum groupoids~\cite{Har1,Hay1,Eti1}, and Hopf--Galois actions~\cite{Ulb1,Ulb2,Sch1}. The relation with ergodic actions comes by means of a crossed product construction and a Morita theory for quantum groupoids, but we will not further go in to this in this paper. We also mention that a different kind of Tannaka-Kre\u{\i}n duality for classical homogeneous spaces was developed in \cite{Sug1}, and for actions on finite quantum spaces in~\cite{Ban1,Ban2} within the framework of planar algebras.

Here is a short summary of the contents of the paper. The first two sections will cover preliminaries and fix notations. They are meant as an aid for readers who are not familiar with the methodology. In the \emph{first section}, we will recall the basic concepts concerning compact quantum groups and quantum homogeneous spaces. In the \emph{second section}, we introduce the necessary prerequisites concerning $\cC^*$-categories, tensor $\cC^*$-categories and module $\cC^*$-categories.  Then, in the next five sections, we prove our main results. In the \emph{third section}, we explain how quantum homogeneous spaces lead to indecomposable module $\cC^*$-categories. In the \emph{fourth section}, we briefly expand on the algebraic content of a general compact quantum group action, so that in the \emph{fifth section}, we can concentrate on the essential part of the reconstruction of a quantum homogeneous space from an indecomposable module $\cC^*$-category. In the short \emph{sixth section} we show that this establishes essentially an equivalence between the two notions. In the \emph{seventh section}, we give further comments on the functoriality of this correspondence. In the \emph{appendix}, we explain the link between module $\cC^*$-categories and bi-graded tensor functors. It is mainly meant to provide details for, as well as to generalize, the remark which appears in the proof of Theorem 2.5 of~\cite{Eti1}.

In the accompanying paper~\cite{DeC3}, we apply the results of the present paper to the case of the compact quantum group $\SU_q(2)$.

\paragraph{Conventions} To have consistency when working with Hilbert $\cC^*$-modules, we will always take the inner product $\langle \xi, \eta \rangle$ of a Hilbert space to be linear in $\eta$ and antilinear in $\xi$.  When $\xi$ and $\eta$ are vectors in a Hilbert space $\Hsp$, we write $\omega_{\xi,\eta}$ for the functional $T \mapsto \langle \xi, T \eta\rangle$ on $B(\Hsp)$. When $A$ and $B$ are $\cC^*$-algebras, $A \otimes B$ denotes their minimal tensor product unless otherwise stated.

\section{Compact quantum groups and related structures}
\label{SecQG}

\subsection{Compact quantum groups}
\label{SubSecQG3}

\begin{Def}[\cite{Wor2}] A \emph{compact quantum group $\G$} consists of a unital $\cC^*$-algebra $\Co{\G}$ and a faithful unital $^*$-homomorphism $\Delta\colon \Co{\G}\rightarrow \Co{\G}\otimes \Co{\G}$ satisfying the coassociativity condition $(\Delta \otimes \id) \circ \Delta = (\id \otimes \Delta) \circ \Delta$ and the cancelation condition
\[
\lbrack \Delta(\Co{\G})(1\otimes \Co{\G})\rbrack^{\mathrm{\ncl}} = \Co{\G}\otimes \Co{\G}= \lbrack \Delta(\Co{\G})(\Co{\G}\otimes 1)\rbrack^{\mathrm{\ncl}},
\] where $\ncl$ means taking the norm-closed linear span.
\end{Def}

We recall from~\cite{Wor2} that any compact quantum group admits a unique positive state $\varphi_{\G}$ which satisfies
\begin{equation}\label{EqInvState}
(\id\otimes \varphi_{\G})(\Delta(x))= \varphi_{\G}(x)1= (\varphi_{\G}\otimes \id)(\Delta(x)),\qquad x\in \Co{\G}.
\end{equation}
This state is called the \emph{invariant state} (or the \emph{Haar state}) of $\Co{\G}$.

\begin{Def} The compact quantum group $\G$ is called \emph{reduced} if the invariant state $\varphi_{\G}$ is faithful.\end{Def}

In the rest of the paper, we will always work with reduced compact quantum groups. This is no serious restriction, as to any $\G$ one can associate a reduced companion which has precisely the same representation theory as $\G$.

\begin{Def}\label{DefCQGRep}
A \emph{unitary corepresentation} $u$ of $\Co{\G}$ on a Hilbert space $\Hsp_u$ is given by a unitary element $u$ of $B(\Hsp_u) \otimes \Co{\G}$ satisfying the multiplicativity condition
\[
(\id \otimes \Delta)(u) = u_{1 2} u_{1 3} \in B(\Hsp_u) \otimes \Co{\G}\otimes \Co{\G},
\]
where the leg numbering indicates at which slot in a multiple tensor product one places the element, filling the blank spots with units.  A unitary corepresentation $u$ is said to be finite-dimensional when $\Hsp_u$ is so.

When $u$ and $v$ are unitary corepresentations of $\Co{\G}$, an operator $T \in B(\Hsp_u, \Hsp_v)$ is said to be an \emph{intertwiner} between $u$ and $v$ if it satisfies $v (T \otimes 1) = (T \otimes 1) u$. A unitary corepresentation $u$ is called \emph{irreducible} if the space of intertwiners from $u$ to itself is one-dimensional.
\end{Def}

In what follows we will refer to unitary corepresentations of $\Co{\G}$ as unitary representations of $\G$.

\subsection{Quantum homogeneous spaces}
\label{SubQHom}

\begin{Def}[\cite{Boc1,Pod1}] Let $\G$ be a compact quantum group. An \emph{action} of $\G$ on a unital $\cC^*$-algebra $A$ is a faithful unital $^*$-homomorphism
\[
\alpha\colon A\rightarrow A\otimes \Co{\G}
\]
satisfying the coaction condition $(\id\otimes \Delta) \circ \alpha = (\alpha\otimes \id) \circ \alpha$ and the density condition
\[
\lbrack (1\otimes \Co{\G})\alpha(A)\rbrack^{\mathrm{\ncl}} = A\otimes \Co{\G}.
\]

We call the action \emph{ergodic} if the space
\[
A^{\G} = \{x\in A\mid \alpha(x)= x\otimes 1\}
\]
is  equal to $\mathbb{C}1$.  If $(A,\alpha)$ is an ergodic action, we will use the notation $A= \Co{\X}$, and refer to the symbol $\X$ as the quantum homogeneous space.
\end{Def}

If $\X$ is a quantum homogeneous space for $\G$, then $\Co{\X}$ carries a canonical faithful positive state $\varphi_{\X}$, determined by the identity
\[
(\id\otimes \varphi_{\G})(\alpha(x)) = \varphi_{\X}(x)1\qquad (x\in \Co{\X}).
\]
It is the unique state on $\Co\X$ which is $\alpha$-invariant, $(\varphi_{\X}\otimes \id)\alpha(x) = \varphi_{\X}(x)1$ for all $x\in \Co{\X}$.

\section{$\cC^*$-categories}\label{SecCCat}

\subsection{Semi-simple $\cC^*$-categories}

\begin{Def}[\cite{Ghe1}]
A $\cC^*$-\emph{category} $\cat{D}$ is a $\mathbb{C}$-linear category whose morphism spaces are Banach spaces satisfying the submultiplicativity condition $\|ST\| \leq \|S\|\|T\|$ for composition of morphisms $S$ and $T$, and admitting antilinear `involutions'
\[
^*\colon\Mor(X, Y) \rightarrow \Mor(Y, X), \quad T \mapsto T^*,
\]
which behave contravariantly and satisfy the $\cC^*$-condition $\|T^*T\| = \|T\|^2$ for each morphism $T$.  A linear functor between two $\cC^*$-categories is called a $\cC^*$-\emph{functor} if it preserves the $^*$-operation.
\end{Def}

\begin{Rem}\label{RemFuncCat}
Let $\cat{D}$ and $\cat{D}'$ be $\cC^*$-categories.  Let $\Fun(\cat{D}, \cat{D}')$ be the category
\begin{itemize}
\item
whose objects are the $\cC^*$-functors from $\cat{D}$ to $\cat{D}'$, and
\item
whose morphisms between two functors $F, G\colon \cat{D} \rightarrow \cat{D}'$ consist of the natural transformations $\phi_\bullet = (\phi_X \colon F X \rightarrow G X)_{X \in \cat{D}}$ such that $(\norm{\phi_X})_{X \in \cat{D}}$ is uniformly bounded.
\end{itemize}
Then $\Fun(\cat{D}, \cat{D}')$ is a $\cC^*$-category with the norm $\norm{\phi_\bullet} = \sup_{X \in \cat{D}} \norm{\phi_X}$ and the involution $(\phi^*)_X = (\phi_X)^*$.
\end{Rem}

\begin{Def}[\cite{Ghe1}]
We say that an object $X$ in a $\cC^*$-category $\cat{D}$ is \emph{simple} if $\Mor(X, X)$ is isomorphic to $\C$.  We call $\cat{D}$ \emph{semi-simple}~\cite[Section 1.6]{Mug2} if $\cat{D}$ admits finite direct sums and if any of its objects is isomorphic to a finite direct sum of simple objects.
\end{Def}

\begin{Rem}\label{RemSSFinDimMor}
A $\cC^*$-category $\cat{D}$ is semi-simple if and only if all morphism spaces are finite-dimensional and `idempotents split'. The latter condition means that any self-adjoint projection $p\in \Mor(X,X)$ is of the form $v v^*$ for some isometry $v\in \Mor(Y,X)$. Furthermore, a semi-simple $\cC^*$-category also has a zero object $0$, i.e. an object which is both initial and terminal.
\end{Rem}

\begin{Def}
Let $J$ be a set, and $\cat{D}$ a semi-simple $\cC^*$-category.  We say that $\cat{D}$ is \emph{based on} $J$ if we are given a bijection between $J$ and a maximal family of mutually non-isomorphic simple objects in $\cat{D}$. We then write $X_r$ for the simple object associated with $r\in J$.
\end{Def}

By definition, any object $X$ in a semi-simple $\cC^*$-category $\cat{D}$ based on $J$ is isomorphic to a direct sum $\oplus_{r\in J} m_r X_r$. The integer $m_r$ is called the \emph{multiplicity} of $X_r$ in $X$, and is uniquely determined by $m_r= \dim(\Mor(X_r,X))$. Then for any object $X$ and any irreducible $X_r$, the complex vector space $\Mor(X_r, X)$ admits a natural structure of Hilbert space by the inner product $\langle S, T \rangle = S^* T \in \Mor(X_r, X_r) = \mathbb{C}$.

Examples of semi-simple $\cC^*$-categories will be presented in Section~\ref{SecTK1} and the appendix. They can be seen as categorified versions of Hilbert spaces, cf. the slightly different context of~\cite{Bae1}.  As with Hilbert spaces, there is essentially only one semi-simple $\cC^*$-category for each cardinal number, the cardinality of the set of isomorphism classes of irreducible objects in the given semi-simple $\cC^*$-category, cf. Lemma~\ref{LemEquii}. However, true to this analogy, they arise in various presentations in practical situations, from concrete to abstract. For the moment, it will suffice to have the following characterization of equivalences between semi-simple $\cC^*$-categories.

\begin{Lem}\label{LemEquiii}
Let $\cat{D}$ and $\cat{D}'$ be semi-simple $\cC^*$-categories, with $\cat{D}$ based on an index set $J$. Let $F$ be a $\cC^*$-functor from $\cat{D}$ to $\cat{D}'$. Then $F$ is an equivalence of categories if and only if the set $\{F(X_r)\mid r\in J\}$ forms a maximal set of mutually non-isomorphic irreducible objects in $\cat{D}'$.
\end{Lem}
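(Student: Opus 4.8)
The plan is to reduce everything to the behaviour of $F$ on simple objects, using semi-simplicity of both categories together with two elementary $\cC^*$-categorical facts. First, a $\cC^*$-functor automatically preserves finite direct sums: if $X \cong \bigoplus_i X_i$ is witnessed by isometries $v_i \in \Mor(X_i, X)$ with $v_i^* v_j = \delta_{ij}\id$ and $\sum_i v_i v_i^* = \id_X$, then applying the $^*$-functor $F$ yields the same relations for the $F(v_i)$, so $FX$ together with the $F(v_i)$ is a direct sum $\bigoplus_i FX_i$ in $\cat{D}'$. Secondly, distinct simple objects of a $\cC^*$-category have zero morphism space: if $X, Y$ are simple and $0 \neq T \in \Mor(X,Y)$, then $T^*T = \lambda\id_X$ and $TT^* = \mu\id_Y$ with $\lambda, \mu > 0$ (the $\cC^*$-identity forcing $T^*T \neq 0$), so $\lambda^{-1/2}T$ is unitary and $X \cong Y$; in particular $\Mor(X_r, X_s) = \delta_{rs}\C$, as already used in the text. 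Decomposing arbitrary objects into simples and using that $F$ is compatible with the block decomposition of $\Mor(\bigoplus_r m_r X_r, \bigoplus_s n_s X_s)$ into the spaces $\Mor(X_r, X_s)$ (via $T \mapsto (w^* T v)$, which $F$ sends to $F(w)^* F(T) F(v)$), one sees that $F$ is fully faithful if and only if each map $\Mor(X_r, X_s) \to \Mor(FX_r, FX_s)$ is bijective, and $F$ is essentially surjective if and only if every simple object of $\cat{D}'$ is isomorphic to some $FX_r$.

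I would then turn to the forward direction: assume $F$ is an equivalence, hence fully faithful and essentially surjective. Then $\Mor(FX_r, FX_r) \cong \Mor(X_r, X_r) = \C$, so each $FX_r$ is simple, and $\Mor(FX_r, FX_s) \cong \Mor(X_r, X_s) = 0$ for $r \neq s$, so the $FX_r$ are mutually non-isomorphic. For maximality, if $Y$ is simple in $\cat{D}'$, essential surjectivity gives $Y \cong FX$ with $X \cong \bigoplus_r m_r X_r$, and then $1 = \dim\Mor(Y,Y) = \dim\Mor(X,X) = \sum_r m_r^2$, forcing $X \cong X_{r_0}$ for a unique $r_0$ and hence $Y \cong FX_{r_0}$. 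So $\{FX_r \mid r \in J\}$ is a maximal family of mutually non-isomorphic irreducible objects.

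For the converse, assume $\{FX_r \mid r \in J\}$ is such a family. Essential surjectivity follows from maximality together with the additivity of $F$ recorded above: every $Y \in \cat{D}'$ is a finite direct sum of simples, each isomorphic to some $FX_r$, so $Y \cong F(\bigoplus_r m_r X_r)$. Full faithfulness reduces to the maps $\Mor(X_r, X_s) \to \Mor(FX_r, FX_s)$: for $r \neq s$ both sides vanish (the target because the $FX_r$ are mutually non-isomorphic simples), and for $r = s$ both sides equal $\C$ while $F$ sends $\id_{X_r}$ to $\id_{FX_r} \neq 0$ (as $FX_r$ is simple, hence nonzero), so the $\C$-linear map is an isomorphism. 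Hence $F$ is an equivalence.

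I do not expect a deep obstacle; the content lies entirely in organizing the reduction to simple objects and invoking the two $\cC^*$-categorical facts. The one step that needs care is the preservation of direct sums in the first paragraph, which genuinely uses that $F$ is a $^*$-functor and not merely linear — a general additive functor need not split a direct sum on the nose — after which the proof is bookkeeping with the semi-simple structure and the identity $m_r = \dim\Mor(X_r, X)$.
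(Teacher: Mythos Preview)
Your proof is correct and follows essentially the same approach as the paper: both reduce full faithfulness to the action of $F$ on morphisms between simples via the block decomposition of $\End(\oplus_r m_r X_r)$, and derive essential surjectivity from maximality together with preservation of direct sums. You are more explicit than the paper about why a $\cC^*$-functor preserves direct sums and about the forward direction (which the paper dismisses as obvious), but the structure of the argument is the same.
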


\begin{proof}
The necessity of the condition is obvious.  Let us see that it is also sufficient.  Let $X$ be an irreducible object of $\cat{D}$ and let $m$ be a nonnegative integer. Then the $\cC^*$-algebra $\End(m X)$ is isomorphic to $M_m(\mathbb{C})$, where the identity morphisms of the direct summands form a partition of unity by mutually equivalent minimal projections.  Since $F(X)$ is also an irreducible object, it follows that $F$ induces a $\cC^*$-algebra isomorphism between $\End(m X)$ and $\End(F(m X)) \cong \End(m F(X))$.  More generally, given a finite direct sum $X=\oplus_{r\in J} m_r X_r$, we can conclude that $F$ provides an isomorphism between $\End(X)$ and $\End(F(X))$. Finally, by considering this argument for $X\oplus Y$, we conclude that $F$ gives a bijection from $\Mor(X,Y)$ to $\Mor(F(X),F(Y))$ for any objects $X,Y$, that is, $F$ is a fully faithful functor.

As the set $\{F(X_r)\mid r\in J\}$ forms a maximal set of mutually non-isomorphic irreducible objects in $\cat{D}'$, we also have that $F$ is essentially surjective. From~\cite[Theorem IV.4.1]{Mac1}, we conclude that $F$ is an equivalence.
\end{proof}

\subsection{Tensor $\cC^*$-categories}

\begin{Def}\cite{Dop1}
A \emph{(strict) tensor $\cC^*$-category} $\cat{C}=(\cat{C},\otimes,\mathbbm{1})$ consists of a $\cC^*$-category $\cat{C}$ together with a bilinear $C^*$-functor $\otimes\colon \cat{C}\times \cat{C}\rightarrow \cat{C}$ and an object $\mathbbm{1} \in \cat{C}$ such that there are equalities of functors
\begin{align*}
-\otimes (-\otimes -) &=(-\otimes-)\otimes - ,&  \mathbbm{1}\otimes - &= \id_{\cat{C}}= -\otimes \mathbbm{1}.
\end{align*}
\end{Def}

The `strictness' condition refers to the on the nose associativity of $\otimes$. In most examples which arise in practice, the associativity only holds up to certain coherence isomorphisms~\cite[Chapter VII]{Mac1}. But for the cases we will encounter, the coherence isomorphisms will be obvious and one can safely ignore them. Also for abstract tensor categories, one can almost always restrict oneself to the setting of strict tensor categories by Mac Lane's coherence theorem~\cite[Section VII.2]{Mac1}. This coherence result holds as well on the $\cC^*$-level.

\begin{Def}[\cite{Dop1,Lon1}]\label{DefTensor}
Let $\cat{C}$ be a tensor $\cC^*$-category. An object $U$ in $\mathcal{C}$ is said to \emph{admit a conjugate} or \emph{dual} if there exists a triple $(\bar{U},R_U,\bar{R}_U)$ with $\bar{U} \in \cat{C}$ and $(R_U,\bar{R}_U)$ a couple of morphisms
\begin{align*}
R_U\colon \mathbbm{1} &\rightarrow \bar{U}\otimes U, &
\bar{R}_U\colon \mathbbm{1} &\rightarrow U\otimes \bar{U}
\end{align*}
satisfying the \emph{conjugate equations}
\begin{align}\label{EqDualityMors}
(\bar{R}_U^*\otimes \id_U) (\id_U\otimes R_U) &= \id_U, &
(R_U^*\otimes \id_{\bar{U}}) (\id_{\bar{U}}\otimes \bar{R}_U) &= \id_{\bar{U}}.
\end{align}

The full subcategory of all objects in $\cat{C}$ admitting duals is denoted by $\mathcal{C}_\textrm{f}$. A tensor $\cC^*$-category $\mathcal{C}$ is called \emph{rigid} if $\mathcal{C} = \mathcal{C}_\textrm{f}$.
\end{Def}

\begin{Rems}
\begin{enumerate}
\item\cite[Theorem 2.4]{Lon1} When $U$ and $V$ are in $\cat{C}_\textrm{f}$, the product $\bar{V} \otimes \bar{U}$ of their duals is in duality with $U \otimes V$. Moreover, if $(\bar{U},R_U,\bar{R}_U)$ makes a dual for $U$, then $(U,\bar{R}_U,R_U)$ makes a dual for $\bar{U}$. It follows that $\cat{C}_\textrm{f}$ is a rigid $\cC^*$-tensor subcategory of $\cat{C}$.
\item For any $U$, the object $\bar{U}$, when it exists, is unique up to isomorphism. If $(R_U,\bar{R}_U)$ satisfy the conjugate equations, then for any $\lambda \in \mathbb{C}^\times$ also $(\lambda R_U,\bar{\lambda}^{-1}\bar{R}_U)$ satisfy the same equations. When the unit of $\cat{C}$ is irreducible, then for $U$ irreducible and $\bar{U}$ a fixed dual, this is the only arbitrariness in the choice of $(R_U,\bar{R}_U)$.
\item When the unit of $\cat{C}$ is irreducible, then for any irreducible $U$ with dual $\bar{U}$, one can always arrange for a solution $(R_U,\bar{R}_U)$ of the conjugate equations which is \emph{normalized}, i.e.~such that $R_U^*R_U = \bar{R}_U^*\bar{R}_U$. Then by the above scaling result, $\dim_q(U) = R_U^*R_U$ is a strictly positive real number which is uniquely determined by $U$. It is called the \emph{quantum dimension} of $U$.
\end{enumerate}
\end{Rems}

\begin{Exas}\label{ExaTenso}
\begin{enumerate}
\item
The category of all Hilbert spaces and bounded maps is a tensor $\cC^*$-category for the ordinary tensor product of Hilbert spaces. The maximal rigid subcategory consists of all finite-dimensional Hilbert spaces. If $\mathscr{H}$ is a finite-dimensional  Hilbert space, the complex conjugate space $\overline{\Hsp}$ can be taken as its conjugate object, where the maps $R_{\Hsp}$ and $\bar{R}_{\Hsp}$ are given by
\begin{align*}
R_{\Hsp}^*\colon \overline{\Hsp} \otimes \Hsp &\rightarrow \mathbb{C}, \quad \bar{\xi}\otimes \eta\rightarrow \langle \xi,\eta\rangle, &
R_{\Hsp}^*\colon \Hsp \otimes \overline{\Hsp} &\rightarrow \mathbb{C}, \quad \xi\otimes \bar{\eta}\rightarrow \langle \eta,\xi\rangle.
\end{align*}
\item
For any compact quantum group $\mathbb{G}$, the category $\Rep(\G)$ of its finite-dimensional unitary representations together with the intertwiners forms a rigid tensor $\cC^*$-category with irreducible unit object. The tensor product $u \Circt v$ of two representations $u$ and $v$ is defined to be the representation on $\Hsp_u\otimes \Hsp_v$ given by the unitary $u_{13}v_{23} \in B(\Hsp_u) \otimes B(\Hsp_v) \otimes \Co{\G}$.  When $u$ is an object of $\Rep(\G)$, its dual can be given by a unitarization of $(j \otimes \id)(u^{-1}) \in B(\overline{\Hsp_u}) \otimes \Co{\G}$, where $j \colon B(\Hsp_u) \rightarrow B(\overline{\Hsp_u})$ is the natural anti-isomorphism characterized by $j(T) \bar{\xi} = \overline{T^* \xi}$. Unlike the case of Hilbert spaces or compact groups, $u \Circt v$ is not isomorphic to $v\Circt u$ in general.

\item\cite{Dop1,Wor0} For a fixed $\cC^*$-category $\cat{D}$, let $\End(\cat{D})$ denote the category of $\cC^*$-endofunctors, cf. Remark \ref{RemFuncCat}. Then $\End(\cat{D})$ is a tensor $\cC^*$-category, with the $\otimes$-structure $F\otimes G = F\circ G$ given by the composition of endofunctors, and with the identity functor providing the unit. The associated rigid category $\End(\cat{D})_\textrm{f}$ consists of adjointable functors whose unit and co-unit maps are uniformly bounded.
\end{enumerate}
\end{Exas}

We recall the notion of strong tensor functor and tensor equivalence.

\begin{Def}
Let $\cat{C}_1$ and $\cat{C}_2$ be two tensor $\cC^*$-categories. A \emph{strong tensor $\cC^*$-functor} from $\cat{C}_1$ to $\cat{C}_2$ consists of a $\cC^*$-functor $F\colon \cat{C}_1\rightarrow \cat{C}_2$ together with natural unitary transformations
\begin{align*}
\psi_{U,V}\colon F(U)\otimes F(V) &\rightarrow F(U\otimes V),&
c\colon \mathbbm{1}_{\mathcal{C}_2} &\rightarrow F(\mathbbm{1}_{\mathcal{C}_1}),
\end{align*}
satisfying certain coherence conditions~\cite[Section 1.2]{Mug1}.

It is called a \emph{tensor equivalence} if the underlying functor $F$ is an equivalence.
\end{Def}

\begin{Exa}
If $\G$ is a compact quantum group, there is a natural \emph{forgetful functor} from $\Rep(\G)$ to $\Hf$, sending each unitary representation $u$ to the underlying Hilbert space $\mathscr{H}_u$, and acting as the identity on intertwiners. The natural transformations $\psi$ and $c$ are identity maps. In general, there can exist other faithful strong tensor C$^*$-functors from $\Rep(\G)$ to $\Hf$ besides this canonical one, cf.~\cite{BDV1}, but each one of them determines a unique compact quantum group (\cite{Wor4}).
\end{Exa}

The following lemma will be used at some point.

\begin{Lem}[\cite{Lon1}]\label{LemRig}
Let $\cat{C}_1$ and $\cat{C}_2$ be tensor $\cC^*$-categories, and $F\colon \cat{C}_1\rightarrow \cat{C}_2$ a strong tensor $\cC^*$-functor. If $\cat{C}_1$ is rigid, then the image of $F$ is contained in $(\cat{C}_2)_\textrm{f}$.
\end{Lem}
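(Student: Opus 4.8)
The plan is to show that $F$ carries the duality data of each object of $\cat{C}_1$ over to $\cat{C}_2$ via the coherence maps $\psi$ and $c$. First I would fix $U\in\cat{C}_1$ and, using the rigidity of $\cat{C}_1$, choose a dual $(\bar U,R_U,\bar R_U)$ satisfying the conjugate equations \eqref{EqDualityMors}. Then I claim that $F(\bar U)$ is a dual of $F(U)$ in $\cat{C}_2$, with duality morphisms
\begin{align*}
R_{F(U)} &:=\psi_{\bar U,U}^{-1}\circ F(R_U)\circ c\colon\mathbbm{1}_{\cat{C}_2}\to F(\bar U)\otimes F(U),\\
\bar R_{F(U)} &:=\psi_{U,\bar U}^{-1}\circ F(\bar R_U)\circ c\colon\mathbbm{1}_{\cat{C}_2}\to F(U)\otimes F(\bar U).
\end{align*}
These are well defined because $\psi$ is a natural \emph{unitary} transformation, hence invertible with $\psi^{-1}=\psi^{*}$; and since $F$ is a $\cC^{*}$-functor one computes $\bar R_{F(U)}^{*}=c^{*}\circ F(\bar R_U^{*})\circ\psi_{U,\bar U}$.

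Next I would verify the two conjugate equations for this triple. Consider $(\bar R_{F(U)}^{*}\otimes\id_{F(U)})(\id_{F(U)}\otimes R_{F(U)})$. Substituting the definitions together with the formula for $\bar R_{F(U)}^{*}$, one rewrites this composite using three ingredients: the naturality of $\psi$ (to commute $F$ of a morphism past the structure maps), the associativity and unit coherence axioms of the strong tensor functor (in the strict setting $\psi_{U\otimes V,W}\circ(\psi_{U,V}\otimes\id_{F(W)})=\psi_{U,V\otimes W}\circ(\id_{F(U)}\otimes\psi_{V,W})$ and $\psi_{\mathbbm 1,U}\circ(c\otimes\id_{F(U)})=\id_{F(U)}=\psi_{U,\mathbbm 1}\circ(\id_{F(U)}\otimes c)$, with $c$ an isometry), and finally the image under $F$ of the first conjugate equation for $U$, namely $F(\bar R_U^{*}\otimes\id_U)\circ F(\id_U\otimes R_U)=\id_{F(U)}$. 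Stringing these together collapses the composite to $\id_{F(U)}$. The second conjugate equation is obtained by the same chase with the roles of $U$ and $\bar U$ interchanged, using this time the second conjugate equation for $U$. Hence $F(U)$ admits a dual, i.e.\ $F(U)\in(\cat{C}_2)_\textrm{f}$; since $U$ was arbitrary (and duals are stable under isomorphism) the image of $F$ lies in $(\cat{C}_2)_\textrm{f}$.

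I expect the only real work to be the diagram chase of the second paragraph, and that is where the difficulty lies: there is no conceptual obstacle, but one must insert the coherence maps in exactly the right order and keep careful track of which tensor legs each $\psi_{\bullet,\bullet}$ acts on. A clean way to organize the argument is to draw the relevant coherence diagrams for the monoidal functor and read off the required identities, rather than manipulating long strings of composites by hand. (Alternatively, since this is standard, one could simply invoke that a monoidal functor preserves duals; but spelling out the transported morphisms as above is what is actually used later, so it is worth doing explicitly.)
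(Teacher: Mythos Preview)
Your proposal is correct and follows precisely the approach the paper sketches: the paper's proof is a single sentence (``the compatibility of $F$ with the tensor products can be used to construct a duality between $F(U)$ and $F(\bar U)$''), and you have simply written out explicitly the transported duality morphisms $R_{F(U)}=\psi_{\bar U,U}^{-1}F(R_U)c$, $\bar R_{F(U)}=\psi_{U,\bar U}^{-1}F(\bar R_U)c$ and outlined the coherence-and-naturality diagram chase that verifies the conjugate equations. There is nothing to add.
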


\begin{proof}
If $U\in \cat{C}_1$, then the compatibility of $F$ with the tensor products can be used to construct a duality between $F(U)$ and $F(\bar{U})$. Hence the image of $F$ is inside $(\cat{C}_2)_\textrm{f}$.
\end{proof}

\subsection{Module $\cC^*$-categories}

\begin{Def}\label{DefModCat}
Let $\cat{C}$ be a tensor $\cC^*$-category with unit object $\mathbbm{1}$, and $\cat{D}$ a $\cC^*$-category. One says that $\cat{D}=(\cat{D},M,\phi,e)$ is a \emph{$\cat{C}$-module $\cC^*$-category} if $M\colon \mathcal{C}\times \cat{D} \rightarrow \cat{D}$ is a bilinear $^*$-functor with natural unitary transformations
\begin{align*}
\phi\colon M((-\otimes -),-) &\overset{\sim}{\rightarrow} M(-,M(-,-)), &
e\colon M(\mathbbm{1},-) &\overset{\sim}{\rightarrow} \id,
\end{align*}
satisfying certain obvious coherence conditions, cf.~\cite{Ost2}, which we will spell out below.

We say that $\cat{D}$ is \emph{semi-simple} if the underlying $\cC^*$-category is semi-simple.

We say that $\cat{D}$ is \emph{indecomposable} or \emph{connected} if, for all non-zero $X,Y\in \cat{D}$, there exists an object $U\in \cat{C}$ such that $\Mor(M(U,Y),X)\neq 0$.
\end{Def}

In the following, we will use the more relaxed notation $U\otimes X$ for $M(U,X)$, and similarly for morphisms. The coherence conditions can then be written in the following form, as the commutation of the diagrams
\begin{equation}
\xymatrix@C=3.5em{
(U\otimes V \otimes W)\otimes X \ar[r]^{\phi_{U,V\otimes W,X}} \ar[d]_{\phi_{U\otimes V,W,X}} & U\otimes((V\otimes W)\otimes X)\ar[d]^{\id_U\otimes \phi_{V,W,X}} \\ (U\otimes V)\otimes (W\otimes X) \ar[r]^{\phi_{U,V,W\otimes X}}& U\otimes (V\otimes (W\otimes X)),
}
\end{equation}
and
\begin{equation}\label{EqCoh2}
\xymatrix{& U\otimes (\mathbbm{1}\otimes X)\ar[dr]^{\id_U\otimes e_X}& \\
U\otimes X \ar[ur]^{\phi_{U,\mathbbm{1},X}}\ar[dr]_{\phi_{\mathbbm{1},U,X}} \ar[rr]^{\id_{U\otimes X}} && U\otimes X.\\ & \ar[ur]_{e_{U\otimes X}}  \mathbbm{1}\otimes (U\otimes X)&
}
\end{equation}

\begin{Exas}\label{ExaQSubGrpEtc}
\begin{enumerate}
\item
Let $\cat{D}$ be a $\cC^*$-category. Then $\cat{D}$ is a module $\cC^*$-category for $\End(\cat{D})$ in an obvious way.
\item
Let $\G$ be a compact (quantum) group and $\mathbb{H}$ be a closed (quantum) subgroup of $\G$.  Then $\Rep(\mathbb{H})$ is a $\Rep(\G)$-module $\cC^*$-category in a natural way: the action of $\pi \in \Rep(\G)$ on $\theta \in \Rep(\mathbb{H})$ is defined as $\pi_{|\mathbb{H}}\otimes \theta$.  In other words, this is induced by the restriction functor $\Rep(\G) \rightarrow \Rep(\mathbb{H})$, which is a strong tensor $\cC^*$-functor.
\item
More generally, if $\cat{C}_1$ and $\cat{C}_2$ are tensor $\cC^*$-categories, and $F$ a strong tensor $\cC^*$-functor from $\cat{C}_1$ to $\cat{C}_2$, then $\cat{C}_2$ becomes a $\cat{C}_1$-module $\cC^*$-category by the association $M(X,Y) = F(X)\otimes Y$.
\end{enumerate}
\end{Exas}

We will need the following interplay between dual objects and the module structure.

\begin{Lem}\label{LemModCatFrob}
Let $\cat{C}$ be a rigid tensor $\cC^*$-category, and let $\cat{D}$ be a $\C$-module $\cC^*$-category.  For any $U$ in $\cat{C}$ and any objects $X, Y$ in $\cat{D}$, we have an isomorphism $\Mor(U \otimes Y, X) \cong \Mor(Y, \bar{U} \otimes X)$, called \emph{the Frobenius isomorphism} associated with $(R_U, \bar{R}_U)$.
\end{Lem}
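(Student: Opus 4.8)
The plan is to write down two mutually inverse $\C$-linear maps explicitly, mirroring the classical Frobenius reciprocity argument for rigid tensor categories but carrying along the module constraints $\phi$ and $e$. Given $f\in\Mor(U\otimes Y,X)$, I would set
\[
\alpha(f)=(\id_{\bar U}\otimes f)\circ\phi_{\bar U,U,Y}\circ(R_U\otimes\id_Y)\circ e_Y^{-1}\ \in\ \Mor(Y,\bar U\otimes X),
\]
where $R_U\otimes\id_Y$ abbreviates the image of $R_U\colon\mathbbm{1}\to\bar U\otimes U$ under the bifunctor $M(-,Y)$; and given $g\in\Mor(Y,\bar U\otimes X)$, I would set
\[
\beta(g)=e_X\circ(\bar R_U^{*}\otimes\id_X)\circ\phi_{U,\bar U,X}^{-1}\circ(\id_U\otimes g)\ \in\ \Mor(U\otimes Y,X).
\]
Both are visibly well defined and linear, since $M$ is a bilinear $^*$-functor and $\phi$, $e$ are (unitary, hence invertible) natural transformations. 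The assignment $f\mapsto\alpha(f)$ clearly depends only on the chosen solution $(R_U,\bar R_U)$ of the conjugate equations, which is what the terminology in the statement refers to.

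The content of the proof is to verify $\beta\circ\alpha=\id$ and $\alpha\circ\beta=\id$. For $\beta(\alpha(f))$ I would substitute the formula for $\alpha(f)$, use bifunctoriality of $M$ to distribute $\id_U\otimes(-)$ over the composite, and then use naturality of $\phi$ to move the factor $\id_U\otimes(\id_{\bar U}\otimes f)$ across the associativity constraints; after invoking the coherence pentagon and the triangle \eqref{EqCoh2} to reassociate, the part of the composite assembled from $R_U$ and $\bar R_U$ isolates as $M\big((\bar R_U^{*}\otimes\id_U)\circ(\id_U\otimes R_U),\,\id_Y\big)$, which equals $\id_{U\otimes Y}$ by the first of the conjugate equations \eqref{EqDualityMors}; the surviving occurrences of $\phi^{\pm1}$ and $e^{\pm1}$ then cancel, leaving $f$. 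The verification of $\alpha(\beta(g))=g$ is entirely symmetric, this time using the second conjugate equation $(R_U^{*}\otimes\id_{\bar U})(\id_{\bar U}\otimes\bar R_U)=\id_{\bar U}$, equivalently the fact (recorded in the Remarks after Definition~\ref{DefTensor}) that $(\bar U,\bar R_U,R_U)$ is a dual for $\bar U$.

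I expect the only genuine difficulty to be bookkeeping: unlike the strict tensor-categorical statement, every reassociation $(A\otimes B)\otimes X\leftrightarrow A\otimes(B\otimes X)$ must now be mediated by an instance of $\phi$, and one has to apply the pentagon and triangle axioms in the correct order for the cancellations to appear. No semi-simplicity or finite-dimensionality of $\cat D$ is needed; the claim is purely about linear isomorphisms of morphism spaces, and (up to this bookkeeping) it is just Frobenius reciprocity transported across the module structure.

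Finally, a more structural route: currying $M$ gives a functor $\cat C\to\End(\cat D)$, $U\mapsto M(U,-)$, which is a strong tensor $\cC^*$-functor with structure morphisms furnished by $\phi^{-1}$ and $e^{-1}$. By Lemma~\ref{LemRig} its image lands in $\End(\cat D)_\textrm{f}$, so $M(U,-)$ is adjointable with $M(\bar U,-)$ a dual; the asserted isomorphism is then Frobenius reciprocity for a dual pair of endofunctors evaluated at $X$ and $Y$. This merely repackages the same conjugate-equation computation one level up, inside $\End(\cat D)$.
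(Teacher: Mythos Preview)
Your proposal is correct and matches the paper's approach: the paper's proof is the single sentence ``standard argument involving the conjugate equations, cf.\ Proposition~\ref{PropCorrModTen}'', and you have simply spelled out that standard argument. Your formula for $\alpha(f)$ is in fact the very one the paper adopts later (in the Notation preceding Definition~\ref{DefStar}, where it is written $\lu{u}f$), and your structural route via $\cat{C}\to\End(\cat{D})$ and Lemma~\ref{LemRig} is precisely what the cross-reference to Proposition~\ref{PropCorrModTen} is pointing at.
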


\begin{proof}
This can be proved by a standard argument involving the conjugate equations, cf. Proposition \ref{PropCorrModTen}.
\end{proof}

The appropriate notion of morphisms between module $\cC^*$-categories is the following.

\begin{Def}\label{DefMorMod}
Let $\cat{D}$ and $\cat{D}'$ be module $\cC^*$-categories over a fixed tensor $\cC^*$-category $\cat{C}$.  A \emph{$\cat{C}$-module homomorphism} from $\cat{D}$ to $\cat{D}'$ is given by a pair $(G, \psi)$, where $G$ is a functor from $\cat{D}$ to $\cat{D}'$ and $\psi$ is a unitary natural equivalence $G (-\otimes -)\rightarrow -\otimes G-$, such that the diagrams of the form
\begin{equation}\label{EqCModHomFnctrTrivAct}
\xymatrix{
G(\mathbbm{1}\otimes X) \ar[d]_{G(e)} \ar[r]^{\psi_{\mathbbm{1},X}} & \mathbbm{1}\otimes G X \ar[dl]^{e} \\
G X &
}
\end{equation}
and
\begin{equation}\label{EqCModHomFnctr}
\xymatrix@C=4em{\\ &      U\otimes G(V\otimes X)    \ar[rd]^{\id_U\otimes \psi_{V, X}}     &\\
G(U\otimes (V\otimes X)) \ar[ru]^{\psi_{U,V\otimes X}} \ar[d]_{G(\phi_{U, V,X})} && U\otimes (V\otimes G X) \ar[d]^{\phi_{U, V, G X}} \\
G ((U \otimes V)\otimes X) \ar[rr]_{\psi_{U \otimes V, X}} && (U \otimes V)\otimes G X
}
\end{equation}
commute.

An \emph{equivalence} between $\cat{D}$ and $\cat{D}'$ is a morphism $(G,\psi)$ for which $G$ is an equivalence of categories.
\end{Def}

The following section is dedicated to the $\Rep(\G)$-module $\cC^*$-categories which are the star actors of this paper.

\section{Equivariant Hilbert modules}
\label{SecTK1}

\begin{Def}[\cite{Baa1}]
Let $\X$ be a quantum homogeneous space for a compact quantum group $\G$. An \emph{equivariant Hilbert $\cC^*$-module} $\Hmod{E}$ over $\X$ is a right Hilbert $\Co{\X}$-module $\Hmod{E}$, carrying a coaction $\alpha_{\Hmod{E}}\colon\Hmod{E}\rightarrow \Hmod{E}\otimes\Co{\G}$, where the right hand side is the exterior product of $\Hmod{E}$ with the standard right Hilbert $\Co{\G}$-module $\Co{\G}$, satisfying the density condition
\[
\lbrack (1\otimes\Co{\G})\alpha_{\Hmod{E}}(\Hmod{E})\rbrack^{\ncl} = \Hmod{E} \otimes \Co{\G} = \lbrack\alpha_{\Hmod{E}}(\Hmod{E})(1\otimes \Co{\G})\rbrack^{\ncl}
\]
and the compatibility conditions
\begin{enumerate}
\item $\forall x\in \Co{\X}, \forall \xi \in \Hmod{E}\colon\alpha_{\Hmod{E}}(\xi \cdot x)  = \alpha_{\Hmod{E}}(\xi)\alpha_{\X}(x)$,
\item $\forall \xi,\eta\in \Hmod{E}\colon \langle \alpha_{\Hmod{E}}(\xi),\alpha_{\Hmod{E}}(\eta)\rangle_{\Co{\X}\otimes \Co{\G}} = \alpha_{\X}(\langle \xi,\eta\rangle_{\Co{\X}})$.
\end{enumerate}
\end{Def}

\begin{Rem}\label{RemHilb}
An equivariant Hilbert $\cC^*$-module is necessarily saturated, and in particular faithful as a right $\Co{\X}$-module. Indeed, otherwise the closed linear span of $\{\langle \xi,\eta\rangle_{\Co{\X}}\mid \xi,\eta\in \Hmod{E}\}$ would give a proper equivariant closed 2-sided ideal $\mathcal{I}$ in $\Co{\X}$. But any invariant state on $\Co{\X}/\mathcal{I}$ would induce a non-faithful invariant state over $\Co{\X}$, which is a contradiction.
\end{Rem}

To any equivariant Hilbert $\Co{\X}$-module one can associate a special unitary which implements the coaction.

\begin{Def}\label{DefEqvrModUntryMorRep}
Let $\X$ be a quantum homogeneous space for a compact quantum group $\G$, and $\Hmod{E}$ an equivariant Hilbert $\cC^*$-module over $\X$. One defines the \emph{associated unitary morphism}
\[
X_{\Hmod{E}} \in \mathcal{L}_{\Co{\X} \otimes \Co{\G}} \left(\Hmod{E} \otimes_{\alpha_\X} (\Co{\X} \otimes \Co{\G}), \Hmod{E} \otimes \Co{\G} \right)
\]
by the formula $X_{\Hmod{E}}(\xi\otimes (x\otimes h)) = \alpha_{\Hmod{E}}(\xi)(x\otimes h)$.
\end{Def}

\begin{Exa}\label{ExaHilb}
Consider a set $\bullet$ with one element, and consider $\Co{\bullet}=\mathbb{C}$ with the trivial right action
\[
\alpha_{\triv}\colon\Co{\bullet}\rightarrow \Co{\bullet}\otimes \Co{\G}, \quad 1\rightarrow 1\otimes 1.
\]
Then an equivariant Hilbert $\cC^*$-module over $\bullet$ is nothing but a representation of $\G$.  Indeed, a right Hilbert $\Co{\bullet}$-module is just a Hilbert space $\mathscr{H}$. Then the receptacle of the unitary operator in Definition~\ref{DefEqvrModUntryMorRep} can be identified with $B(\mathscr{H})\otimes \Co{\G}$. This gives the correspondence of the equivariant Hilbert $\cC^*$-modules over $\bullet$ and the unitary representations of $\G$. We will denote the equivariant Hilbert space associated to $u$ as $(\Hsp_u,\delta_u)$.
\end{Exa}

We will be particularly interested in a subcategory of equivariant Hilbert $\cC^*$-modules which admit a nice decomposition into irreducible objects.

\begin{Def} An equivariant Hilbert $\cC^*$-module $\Hmod{E}$ is called \begin{itemize}
\item
\emph{finite} if it is finitely generated projective as a right $\Co{\X}$-module, and
\item
\emph{irreducible} if the space
\[
\mathcal{L}_{\G}(\Hmod{E}) = \{T \in \mathcal{L}(\Hmod{E})\mid \alpha_{\Hmod{E}}(T\xi) = (T \otimes 1)\alpha_{\Hmod{E}}(\xi) \textrm{ for all } \xi \in \Hmod{E}\}
\]
is one-dimensional.
\end{itemize}
\end{Def}

Any irreducible equivariant Hilbert $\cC^*$-module is finite in the above sense, as seen in the next proposition.

\begin{Prop}\label{PropFinn}
An equivariant $\cC^*$-module is finite if and only if the $\cC^*$-algebra $\mathcal{L}_{\G}(\Hmod{E})$ is finite-dimensional.
\end{Prop}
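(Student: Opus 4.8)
The plan is to prove both implications, with the substantive content being the `if' direction. The `only if' direction is essentially formal: if $\Hmod{E}$ is finitely generated projective over $\Co{\X}$, then $\mathcal{L}(\Hmod{E}) \cong p M_n(\Co{\X}) p$ for some projection $p$, which is a unital $\cC^*$-algebra, and $\mathcal{L}_{\G}(\Hmod{E})$ is the fixed point algebra of a coaction of $\G$ on it. Since $\Co{\X}$ carries the faithful invariant state $\varphi_{\X}$, one gets a faithful invariant state on $\mathcal{L}(\Hmod{E})$ by composing the matrix trace with $\varphi_{\X}$, and restricting it to $\mathcal{L}_{\G}(\Hmod{E})$ gives a faithful positive tracial-type functional; combined with the fact that $\mathcal{L}_{\G}(\Hmod{E})$ sits inside a unital $\cC^*$-algebra with a faithful state that it is invariant under, one expects finite-dimensionality by the usual spectral-subspace/averaging argument for ergodic-type situations. (Alternatively, $\mathcal{L}_{\G}(\Hmod{E})$ being a unital $\cC^*$-subalgebra of the finite-dimensional $p M_n(\Co{\X}) p$ is immediate once one knows $\Co{\X}$ is finite-dimensional — but $\Co{\X}$ need not be, so one really does need the fixed-point argument.)

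For the `if' direction, assume $\mathcal{L}_{\G}(\Hmod{E})$ is finite-dimensional; I would show $\Hmod{E}$ decomposes as a finite direct sum of irreducible equivariant Hilbert modules, each of which is finite, whence $\Hmod{E}$ is finite. The first step is that $\mathcal{L}_{\G}(\Hmod{E})$, being a finite-dimensional $\cC^*$-algebra, has a finite maximal family of orthogonal minimal projections $p_1,\dots,p_k$ summing to $1$; each $p_i$ is $\G$-invariant (it lies in $\mathcal{L}_{\G}(\Hmod{E})$), so $p_i \Hmod{E}$ is an equivariant Hilbert submodule, and minimality of $p_i$ forces $\mathcal{L}_{\G}(p_i\Hmod{E})$ to be one-dimensional, i.e. $p_i \Hmod{E}$ is irreducible. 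It therefore suffices to treat the irreducible case: show that an irreducible equivariant Hilbert module is finitely generated projective over $\Co{\X}$.

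For an irreducible $\Hmod{E}$, I would use the coaction to produce a nonzero $\G$-equivariant map from some $\Hsp_u \otimes \Co{\X}$ (with the diagonal-type coaction coming from Example \ref{ExaHilb} and $\alpha_{\X}$) into $\Hmod{E}$: concretely, the associated unitary morphism $X_{\Hmod{E}}$ of Definition \ref{DefEqvrModUntryMorRep}, together with the Peter--Weyl decomposition of $\Co{\G}$ into matrix coefficients of irreducible $u \in \Rep(\G)$, exhibits $\Hmod{E}$ as spanned (over $\Co{\X}$) by the isotypical pieces $\Hmod{E}_u = \{\xi \in \Hmod{E} : \alpha_{\Hmod{E}}(\xi) \in \Hmod{E} \odot \mathcal{O}_u\}$, where $\mathcal{O}_u$ is the span of matrix coefficients of $u$. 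The key point is that each $\Hmod{E}_u$, as a right $\Co{\X}$-module, is related to $\Hom_{\G}(\Hsp_u, \Hmod{E})$, which is a module over $\mathcal{L}_{\G}(\Hmod{E})$ and is finite-dimensional by hypothesis; saturation (Remark \ref{RemHilb}) ensures these pieces genuinely generate. One then shows only finitely many $u$ contribute and that the resulting generating set is finite, using finite-dimensionality of $\mathcal{L}_{\G}(\Hmod{E})$ to bound the total multiplicity. Projectivity follows because an equivariant module is automatically a direct summand of a free-type module via the conditional-expectation/averaging over $\G$ using $\varphi_{\G}$.

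The main obstacle I anticipate is organizing the spectral decomposition of $\Hmod{E}$ under the coaction so as to get genuine finite generation over $\Co{\X}$ — i.e. ruling out that infinitely many irreducible types $u$ of $\G$ appear with nonzero multiplicity in $\Hmod{E}$, or that a single type appears with infinite multiplicity. This is exactly where the hypothesis that $\mathcal{L}_{\G}(\Hmod{E})$ is finite-dimensional has to be brought to bear, presumably by identifying $\mathcal{L}_{\G}(\Hmod{E})$ with (or bounding it below by) something like $\bigoplus_u \mathrm{End}(\text{multiplicity space of } u)$, so that finite-dimensionality of the former caps both the number of types and each multiplicity. Getting the $\Co{\X}$-module structure to cooperate — as opposed to a mere vector-space statement — will require care with the right Hilbert module inner products and condition (2) in the definition of equivariant Hilbert module.
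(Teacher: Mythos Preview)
Your reduction to the irreducible case is fine and matches the paper, but the subsequent plan for the `if' direction has a genuine gap. You propose to show that only finitely many irreducible $\G$-types $u$ appear in the isotypical decomposition of $\Hmod{E}$, and to bound their multiplicities by somehow embedding $\bigoplus_u \End(\text{multiplicity space of } u)$ into $\mathcal{L}_{\G}(\Hmod{E})$. But no such embedding exists, and the conclusion you want is simply false: already for $\Hmod{E} = \Co{\X}$ itself (which is irreducible, with $\mathcal{L}_{\G}(\Co{\X}) = \C$, and obviously finite over $\Co{\X}$), infinitely many irreducible $\G$-types typically occur with nonzero multiplicity. The isotypical decomposition is a comodule decomposition, not a $\Co{\X}$-module decomposition; the $\Co{\X}$-action mixes the types, and an equivariant $\Co{\X}$-linear endomorphism is far more constrained than one that merely preserves each isotypical piece. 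So the finite-dimensionality of $\mathcal{L}_{\G}(\Hmod{E})$ carries no information about the number of contributing types, and your strategy cannot close.

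The paper's argument for this direction is short and avoids the issue entirely: after reducing to $\mathcal{L}_{\G}(\Hmod{E}) = \C$, take any nonzero positive compact $x \in \mathcal{K}(\Hmod{E})$ and average it with the Haar state, i.e.\ form $(\id\otimes\varphi_{\G})\bigl(X_{\Hmod{E}}(x\otimes 1)X_{\Hmod{E}}^*\bigr)$. This lands in $\mathcal{L}_{\G}(\Hmod{E}) = \C\cdot 1$, is nonzero positive, and is still compact since $\mathcal{K}(\Hmod{E})$ is $\G$-invariant. Hence $1_{\Hmod{E}}$ is compact, and $\Hmod{E}$ is finitely generated projective. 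For the `only if' direction your sketch is also incomplete (the fixed-point algebra of a $\G$-action on a unital $\cC^*$-algebra need not be finite-dimensional without further input); the paper instead passes to the crossed product $\Hmod{E}\rtimes\G$ over $\Co{\X}\rtimes\G$, uses that $\Co{\X}\rtimes\G$ is a direct sum of algebras of compact operators by ergodicity, and reads off that the endomorphism algebra of any finitely generated projective module over such an algebra is finite-dimensional.
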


\begin{proof}
Let $X_{\Hmod{E}}$ be the unitary morphism associated with $\alpha_\xi$ as in Definition~\ref{DefEqvrModUntryMorRep}.  Then, the map $x \mapsto X_{\Hmod{E}} (x \otimes_{\alpha_\xi} 1) X_{\Hmod{E}}^*$ defines a coaction of $\Co{\G}$ on $\mathcal{L}(\Hmod{E})$, and the ideal of compact endomorphisms is a $\G$-invariant subalgebra~\cite{Baa1}.  Moreover, $\mathcal{L}_{\G}(\Hmod{E})$ is precisely the $\G$-fixed point subalgebra of $\mathcal{L}(\Hmod{E})$.

First, let us prove that an equivariant module over $\X$ is finitely generated projective over $\Co{\X}$ when $\mathcal{L}_{\G}(\Hmod{E})$ is finite-dimensional.  We can reduce it to the case of $\mathcal{L}_{\G}(\Hmod{E}) = \C$ by taking a decomposition associated with a partition of unity by minimal projections in $\mathcal{L}_{\G}(\Hmod{E})$.  Then, taking any non-zero positive compact endomorphism $x$ of $\Hmod{E}$, we see that $(\id \otimes \varphi_{\G})(X_\alpha (x \otimes_{\alpha_{\Hmod{E}}} 1) X_\alpha^*)$ is simultaneously compact and nonzero positive scalar in $\mathcal{L}(\Hmod{E})$.  Hence $\Hmod{E}$ is finitely generated projective over $\Co{\X}$~\cite[Lemma~6.5]{Kas1}.

Conversely, suppose that we are given a finitely generated projective $\Co{\X}$-module $\Hmod{E}$ admitting a compatible corepresentation of $\Co{\G}$.  Then, the crossed product module $\Hmod{E} \rtimes \G$, which is finitely generated projective over $\Co{\X} \rtimes \G$, admits a natural faithful representation of $\mathcal{L}_\G(\Hmod{E})$ as $\Co{\X} \rtimes \G$-module homomorphisms.

By the ergodicity of $\G$ on $\X$, we know that $\Co{\X} \rtimes \G$ is a direct sum of algebras of compact operators~\cite{Boc1}.  Hence, for any finitely generated projective module over $\Co{\X} \rtimes \G$, the module endomorphisms must form a finite-dimensional algebra.  This implies that $\mathcal{L}_\G(\Hmod{E})$ is finite-dimensional.
\end{proof}

In particular, any irreducible equivariant Hilbert $\cC^*$-module $\Hmod{E}$ over $\Co{\X}$ gives another quantum homogeneous space $\mathcal{L}(\Hmod{E}) = \mathcal{K}(\Hmod{E})$, by the action as given in the beginning of the above proof.

\begin{Def} A quantum homogeneous space $\mathbb{Y}$ is called \emph{equivariantly Morita equivalent} to $\X$ if there exists an irreducible equivariant Hilbert $\cC^*$-module $\Hmod{E}$ over $\Co{\X}$ and an equivariant C$^*$-algebra isomorphism $\Co{\mathbb{Y}} \rightarrow \mathcal{K}(\Hmod{E})$. We say that such an equivariant Hilbert module $\Hmod{E}$ and associated isomorphism implement the Morita equivalence.\end{Def}

Note that the above terminology is justified by Remark~\ref{RemHilb}.

\begin{Not}\label{DefCatFinEq}
Let $\G$ be a compact quantum group, and $\X$ a quantum homogeneous space over $\G$. We let $\cat{D}_{\X}$ denote the category of finite equivariant Hilbert $\cC^*$-modules over $\X$, whose morphisms are the equivariant adjointable maps between Hilbert $\cC^*$-modules.
\end{Not}

\begin{Prop}
The category $\cat{D}_{\X}$ is a semi-simple $\cC^*$-category.
\end{Prop}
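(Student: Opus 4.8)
The plan is to verify the defining properties of a semi-simple $\cC^*$-category one by one, using Proposition~\ref{PropFinn} as the key finiteness input. First I would check that $\cat{D}_{\X}$ is a $\cC^*$-category at all: for objects $\Hmod{E},\Hmod{F}$ the space $\Mor(\Hmod{E},\Hmod{F})$ is the set of equivariant elements of the Banach space $\mathcal{L}(\Hmod{E},\Hmod{F})$ of adjointable maps, which is norm-closed; submultiplicativity and the $\cC^*$-identity are inherited from $\mathcal{L}$; and the adjoint of an equivariant map is again equivariant. The last point is seen by computing, for $T\colon\Hmod{E}\to\Hmod{F}$ equivariant and $\eta\in\Hmod{F}$, $\xi\in\Hmod{E}$, that $\langle\alpha_{\Hmod{E}}(T^*\eta),\alpha_{\Hmod{E}}(\xi)\rangle = \alpha_{\X}(\langle T^*\eta,\xi\rangle) = \alpha_{\X}(\langle\eta,T\xi\rangle) = \langle\alpha_{\Hmod{F}}(\eta),(T\otimes 1)\alpha_{\Hmod{E}}(\xi)\rangle = \langle(T^*\otimes 1)\alpha_{\Hmod{F}}(\eta),\alpha_{\Hmod{E}}(\xi)\rangle$, and then invoking the density of $\alpha_{\Hmod{E}}(\Hmod{E})(1\otimes\Co{\G})$ in $\Hmod{E}\otimes\Co{\G}$. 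Finite direct sums exist: the orthogonal direct sum $\Hmod{E}\oplus\Hmod{F}$ with coaction $\alpha_{\Hmod{E}}\oplus\alpha_{\Hmod{F}}$ is again an equivariant Hilbert module, finite because a direct sum of finitely generated projective modules is again such, and the zero module serves as zero object.

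By Remark~\ref{RemSSFinDimMor} it then suffices to show that all morphism spaces are finite-dimensional and that idempotents split. For finite-dimensionality, note that $\End_{\cat{D}_{\X}}(\Hmod{E}) = \mathcal{L}_{\G}(\Hmod{E})$, which is finite-dimensional by Proposition~\ref{PropFinn} since $\Hmod{E}$ is finite by the very definition of $\cat{D}_{\X}$; for general $\Hmod{E},\Hmod{F}$ the space $\Mor(\Hmod{E},\Hmod{F})$ embeds as a corner of $\End_{\cat{D}_{\X}}(\Hmod{E}\oplus\Hmod{F})$, which is finite-dimensional by the same argument, hence so is $\Mor(\Hmod{E},\Hmod{F})$.

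For the splitting of idempotents, let $p = p^2 = p^* \in \End_{\cat{D}_{\X}}(\Hmod{E}) = \mathcal{L}_{\G}(\Hmod{E})$. Then $\Hmod{F} := p\Hmod{E}$ is an orthogonally complemented submodule of $\Hmod{E}$, hence a Hilbert $\Co{\X}$-module, and it is finitely generated projective as an algebraic direct summand of the finite module $\Hmod{E}$. Since $p$ is equivariant, $\alpha_{\Hmod{E}}(\Hmod{F}) = (p\otimes 1)\alpha_{\Hmod{E}}(\Hmod{E}) \subseteq \Hmod{F}\otimes\Co{\G}$, so $\alpha_{\Hmod{E}}$ restricts to a map $\alpha_{\Hmod{F}}\colon\Hmod{F}\to\Hmod{F}\otimes\Co{\G}$; the coaction identity, the module compatibility and the inner-product compatibility are immediate restrictions, while the density conditions follow by applying $(p\otimes 1)$ to those for $\Hmod{E}$, e.g. $[(1\otimes\Co{\G})\alpha_{\Hmod{F}}(\Hmod{F})]^{\ncl} = (p\otimes 1)[(1\otimes\Co{\G})\alpha_{\Hmod{E}}(\Hmod{E})]^{\ncl} = (p\otimes 1)(\Hmod{E}\otimes\Co{\G}) = \Hmod{F}\otimes\Co{\G}$. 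Thus $\Hmod{F}\in\cat{D}_{\X}$, and the inclusion $v\colon\Hmod{F}\hookrightarrow\Hmod{E}$ is an equivariant isometry with $vv^* = p$ and $v^*v = \id_{\Hmod{F}}$. Hence every idempotent splits, and $\cat{D}_{\X}$ is semi-simple.

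The genuinely substantive ingredient is Proposition~\ref{PropFinn}, which is already available; everything else is bookkeeping. The only points I expect to need a little care, and thus the main (minor) obstacles, are the verification that the cut-down module $p\Hmod{E}$ still satisfies the density axioms defining an equivariant Hilbert module, and the check that equivariance is preserved under taking adjoints of morphisms — both handled by the density argument above.
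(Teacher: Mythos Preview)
Your proof is correct and follows essentially the same route as the paper: invoke Proposition~\ref{PropFinn} to get finite-dimensional endomorphism algebras, observe that $p\Hmod{E}$ is again an object of $\cat{D}_\X$, and conclude via Remark~\ref{RemSSFinDimMor}. You simply spell out in detail several points (the $\cC^*$-category axioms, existence of direct sums, the density conditions for $p\Hmod{E}$) that the paper leaves implicit.
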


\begin{proof}
By the above proposition, for any object $\Hmod{E}$ in $\cat{D}_\X$, the algebra $\Mor(\Hmod{E}, \Hmod{E})$ is a finite-dimensional $\cC^*$-algebra.  Moreover, if $p\in \Mor(\Hmod{E},\Hmod{E})$ is a projection, then $p\Hmod{E}$ is again an object of $\cat{D}_\X$.  Remark~\ref{RemSSFinDimMor} then implies the assertion.
\end{proof}

In view of Example~\ref{ExaHilb}, it can be seen that finite (resp. irreducible) equivariant Hilbert $\cC^*$-modules play a similar r\^{o}le as the finite-dimensional (resp. irreducible) representations of $\G$.

Now let $\Hmod{E}$ be a finite equivariant Hilbert $\Co{\X}$-module, and let $u$ be a finite-dimensional unitary representation of $\G$. Then we can amplify $\Hmod{E}$ with $u$ to obtain the equivariant Hilbert module $u\Circt \Hmod{E}$.  As a Hilbert $\Co{\X}$-module, $u \Circt \Hmod{E}$ is the amplification $\Hsp_u\otimes \Hmod{E}$ of $\Hmod{E}$ with the Hilbert space $\Hsp_u$. The coaction of $\Co{\G}$ is given by the formula
\[
(u\Circt \alpha_{\Hmod{E}})(\xi \otimes \eta) = u_{1 3} (\xi \otimes \alpha(\eta)),
\]
Then obviously $u \Circt \Hmod{E}$ is still finite.  We record the following facts for later reference.

\begin{Lem}\label{LemEqvrHModStblz}
For any $\Hmod{E} \in \cat{D}_\X$, there exists a representation $u$ of $\G$ for which there is an isometric morphism of $\Hmod{E}$ into $u\Circt \Co{\X}$.
\end{Lem}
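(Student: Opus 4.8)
The plan is to realize $\mathcal{E}$ as a direct summand of an amplification $u\Circt\Co{\X}$ for a suitable finite-dimensional representation $u$: first I would produce finitely many generators of $\mathcal{E}$ that all lie in a single finite-dimensional $\G$-invariant subspace, and then run a polar-decomposition argument. Since $\mathcal{E}$ is finitely generated projective over $\Co{\X}$, I would begin by choosing a finite Parseval frame $\xi_1,\dots,\xi_n\in\mathcal{E}$, i.e.\ elements with $\zeta=\sum_i\xi_i\langle\xi_i,\zeta\rangle$ for all $\zeta$, so that $T\colon\Co{\X}^n\to\mathcal{E}$, $(a_i)_i\mapsto\sum_i\xi_i a_i$, is a split surjection with right inverse $\zeta\mapsto(\langle\xi_i,\zeta\rangle)_i$. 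The coaction $\alpha_{\mathcal{E}}$ restricts to a comodule structure over the canonical dense Hopf $*$-algebra of $\Co{\G}$ on a dense subspace of $\mathcal{E}$ (cf.~\cite{Boc1}), so that every vector of that subspace sits inside a finite-dimensional $\G$-invariant subspace. I would therefore perturb the $\xi_i$ to such vectors $\xi_i'$ with $\sum_i\|\xi_i-\xi_i'\|$ so small that $T'\colon(a_i)_i\mapsto\sum_i\xi_i'a_i$, being a small perturbation of the split surjection $T$, still has a right inverse and is hence surjective. Let $V\subseteq\mathcal{E}$ be the finite-dimensional $\G$-invariant subspace generated by $\xi_1',\dots,\xi_n'$, equipped with the inner product $\langle v,w\rangle_V=\varphi_{\X}(\langle v,w\rangle_{\mathcal{E}})$; using compatibility condition~(2) together with invariance of $\varphi_{\X}$ one checks that $\alpha_{\mathcal{E}}|_V$ defines an isometric, and hence, being finite-dimensional, unitary representation $u$ of $\G$ on $V$.

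Next I would consider the right $\Co{\X}$-linear evaluation map $\beta\colon u\Circt\Co{\X}=\Hsp_u\otimes\Co{\X}\to\mathcal{E}$, $v\otimes x\mapsto v\cdot x$. Fixing an orthonormal basis $(e_j)_j$ of $\Hsp_u=V$ and writing $\alpha_{\mathcal{E}}(e_j)=\sum_i e_i\otimes u_{ij}$, one verifies directly that $\beta$ is adjointable with $\beta^*\zeta=\sum_j e_j\otimes\langle e_j,\zeta\rangle_{\mathcal{E}}$, and that $\beta$ is $\G$-equivariant: expanding $\alpha_{\X}(x)$ and using compatibility condition~(1), both $\alpha_{\mathcal{E}}(\beta(e_j\otimes x))$ and $(\beta\otimes\id)(\alpha_{u\Circt\Co{\X}}(e_j\otimes x))$ come out to $\sum_i(e_i\cdot x)\otimes u_{ij}$. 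Moreover $\beta$ is surjective, since its range contains $V\cdot\Co{\X}\supseteq\sum_i\xi_i'\Co{\X}=\mathcal{E}$ by surjectivity of $T'$. Thus $\beta$ is a surjective adjointable morphism in $\cat{D}_{\X}$, so $\beta\beta^*\in\Mor(\mathcal{E},\mathcal{E})$ has trivial kernel and range all of $\mathcal{E}$, hence is invertible; since $\Mor(\mathcal{E},\mathcal{E})$ is a $\cC^*$-algebra, $(\beta\beta^*)^{-1/2}$ lies in it, and $\gamma:=\beta^*(\beta\beta^*)^{-1/2}\colon\mathcal{E}\to u\Circt\Co{\X}$ is an equivariant morphism with $\gamma^*\gamma=\id_{\mathcal{E}}$, which is the desired isometric embedding.

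The one substantive point is the first step: producing finitely many generators inside a \emph{single} finite-dimensional $\G$-invariant subspace. In general infinitely many spectral components of $\mathcal{E}$ are non-zero — already for $\mathcal{E}=\Co{\X}$ itself — so one cannot simply truncate, and must instead combine the local finiteness of the comodule structure with the perturbation above; this is precisely where finiteness of $\mathcal{E}$ (finite generation together with projectivity) enters, guaranteeing that the perturbed generators still generate. Everything after that — the adjointability and equivariance of $\beta$, and the polar trick converting a surjection into an isometric section — is routine Hilbert-module bookkeeping. I expect the paper's proof to follow essentially this route, perhaps phrasing the perturbation through the conditional-expectation/compact-operator picture already used in the proof of Proposition~\ref{PropFinn}.
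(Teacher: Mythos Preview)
Your argument is correct, but the paper's proof is entirely different and much shorter: it consists of a one-line citation of the equivariant stabilisation theorem from Vergnioux's thesis~\cite{Ver1}. Rather than constructing the embedding by hand, the paper imports a general Kasparov-type result for equivariant Hilbert modules and reads off the finite statement from it. Your route --- density of the algebraic spectral part, perturbation of a finite frame into that part, and polar decomposition of the resulting evaluation map --- is more elementary and fully self-contained within the framework already developed in the paper, at the cost of some length; the paper's route buys brevity at the cost of an external reference. Your expectation that the paper would argue via the conditional-expectation picture of Proposition~\ref{PropFinn} is therefore not borne out.

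Two small refinements to your write-up. First, the unitarity of $u$ does not follow from ``being finite-dimensional'' alone: an isometry in $M_n(A)$ for a general $\cC^*$-algebra $A$ need not be unitary. What makes $u$ invertible (and hence, being isometric, unitary) is that it is a matrix corepresentation of the Hopf $*$-algebra $\PW(\G)$, so the antipode furnishes a two-sided inverse. Second, the claim that $\beta\beta^*$ has ``range all of $\mathcal{E}$'' is not immediate from surjectivity of $\beta$; the clean way to conclude is to observe that $\beta\beta^*$ is a positive element of the \emph{finite-dimensional} $\cC^*$-algebra $\mathcal{L}_\G(\mathcal{E})$ (Proposition~\ref{PropFinn}) with trivial kernel on $\mathcal{E}$, whence $0$ cannot lie in its finite spectrum and it is invertible there.
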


\begin{proof}
This is a consequence of the equivariant stabilization, see Section~3.2 of \cite{Ver1}.
\end{proof}

\begin{Prop}
Let $\X$ be a quantum homogeneous space for a compact quantum group $\G$. Denote by $\cat{D}_{\X}$ the $\cC^*$-category of finite equivariant Hilbert $\Co{\X}$-modules. Then the operation
\[
\Rep(\G) \times \cat{D}_\X\rightarrow \cat{D}_{\X}, \quad (u,\Hmod{E}) \mapsto u\Circt \Hmod{E}
\]
defines a connected $\Rep(\G)$-module C$^*$-category structure on $\cat{D}_{\X}$.
\end{Prop}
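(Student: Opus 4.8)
The plan is to verify that the operation $(u,\Hmod{E})\mapsto u\Circt\Hmod{E}$ carries the structure of a $\Rep(\G)$-module $\cC^*$-category, and then to check connectedness separately. First I would observe that the amplification $u\Circt\Hmod{E}=\Hsp_u\otimes\Hmod{E}$ is manifestly bilinear and functorial in both variables: a morphism $S\colon u\to v$ in $\Rep(\G)$ and an equivariant adjointable map $T\colon\Hmod{E}\to\Hmod{F}$ give $S\otimes T\colon\Hsp_u\otimes\Hmod{E}\to\Hsp_v\otimes\Hmod{F}$, and one checks directly that this intertwines the coactions $u\Circt\alpha_{\Hmod{E}}$ and $v\Circt\alpha_{\Hmod{F}}$, using that $S$ satisfies $v(S\otimes 1)=(S\otimes 1)u$ in $B(\Hsp_u,\Hsp_v)\otimes\Co{\G}$. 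It preserves adjoints because both the $\Rep(\G)$-involution and the Hilbert-module adjoint are the spatial ones on $\Hsp_u\otimes\Hmod{E}$. The associativity constraint $\phi_{u,v,\Hmod{E}}\colon(u\Circt v)\Circt\Hmod{E}\to u\Circt(v\Circt\Hmod{E})$ should simply be the canonical identification $(\Hsp_u\otimes\Hsp_v)\otimes\Hmod{E}\cong\Hsp_u\otimes(\Hsp_v\otimes\Hmod{E})$, which is unitary; one must check it is equivariant, i.e.\ that under this reassociation the coaction $(u\Circt v)\Circt\alpha_{\Hmod{E}}$, whose leg-$3$ part is $(u\Circt v)_{1 3}=u_{1 3}v_{2 3}$ on $\Hsp_u\otimes\Hsp_v$, matches $u\Circt(v\Circt\alpha_{\Hmod{E}})$. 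The unit constraint $e_{\Hmod{E}}\colon\mathbbm{1}\Circt\Hmod{E}=\C\otimes\Hmod{E}\to\Hmod{E}$ is the obvious scalar-multiplication identification, equivariant because the trivial representation contributes $1\otimes 1$ in the third leg.

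Next I would verify the two coherence diagrams of Definition~\ref{DefModCat}. Both the pentagon and the triangle reduce, after unwinding the definitions of $\phi$ and $e$ as the spatial reassociation and unit maps, to the corresponding identities for ordinary tensor products of Hilbert spaces with the Hilbert module $\Hmod{E}$, together with the analogous coherence already built into the tensor structure of $\Rep(\G)$ (where $u\Circt v$ is realized on $\Hsp_u\otimes\Hsp_v$ on the nose). Since all the structure maps are the evident spatial identifications, these diagrams commute strictly, and there is essentially nothing to compute beyond tracking that each map is indeed a morphism in $\cat{D}_\X$. I would also note in passing that $\cat{D}_\X$ is semi-simple by the preceding proposition, so it is a genuine semi-simple $\Rep(\G)$-module $\cC^*$-category.

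The real content is connectedness: for any non-zero $\Hmod{E},\Hmod{F}\in\cat{D}_\X$ I must produce $u\in\Rep(\G)$ with $\Mor(u\Circt\Hmod{F},\Hmod{E})\neq 0$. This is where I expect the main obstacle to lie, and the key tool is Lemma~\ref{LemEqvrHModStblz}: there is a representation $u$ and an isometric equivariant morphism $\Hmod{E}\hookrightarrow u\Circt\Co{\X}$, and likewise $\Hmod{F}\hookrightarrow v\Circt\Co{\X}$. Dualizing via the Frobenius isomorphism (Lemma~\ref{LemModCatFrob}), $\Mor(u\Circt\Hmod{F},\Hmod{E})$ being nonzero is equivalent to a nonvanishing statement about $\Mor(\Hmod{F},\bar u\Circt\Hmod{E})$, and it suffices to find any $u$ with $\Hmod{F}$ and $\bar u\Circt\Hmod{E}$ sharing a common subobject. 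Concretely, I would argue that $\Co{\X}$ itself, as the unit object of this module category, is ``generating'': since $\Hmod{F}$ embeds in $v\Circt\Co{\X}$, it suffices to show $\Mor(w\Circt\Hmod{E},\Co{\X})\neq 0$ for some $w$, equivalently (Frobenius again) $\Mor(\Hmod{E},\bar w\Circt\Co{\X})\neq 0$, which holds for suitable $\bar w=u$ by the stabilization lemma applied to $\Hmod{E}$. Combining these embeddings and using that $\Rep(\G)$ is rigid and that $u\Circt(v\Circt\Hmod{E})\cong(u\Circt v)\Circt\Hmod{E}$, one assembles a nonzero morphism from a suitable amplification of $\Hmod{F}$ to $\Hmod{E}$, which is exactly connectedness. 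The subtle point to handle carefully is ensuring the composite of these isometries and Frobenius duality maps is genuinely nonzero rather than accidentally zero; this follows because each individual map is an isometry (hence injective) and the Frobenius maps are isomorphisms, so the composite is injective on a nonzero object.
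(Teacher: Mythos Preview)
Your proposal is correct and follows essentially the same line as the paper's proof. For the module structure you invoke exactly what the paper invokes, namely that the associativity and unit constraints are the evident spatial identifications for tensor products of Hilbert spaces with Hilbert $\cC^*$-modules; and for connectedness you use the same two ingredients in the same order, the equivariant stabilization Lemma~\ref{LemEqvrHModStblz} to embed each module into an amplification of $\Co{\X}$, together with the Frobenius isomorphism of Lemma~\ref{LemModCatFrob} to flip one of those embeddings so that $\Co{\X}$ appears as a subobject of some $w\Circt\Hmod{E}$, after which the two embeddings compose. The one point you could make slightly more explicit is that the passage from a nonzero morphism $w\Circt\Hmod{E}\to\Co{\X}$ to an isometric inclusion $\Co{\X}\hookrightarrow w\Circt\Hmod{E}$ uses the irreducibility of $\Co{\X}$ in $\cat{D}_\X$ (so that any nonzero morphism into or out of it is a scalar multiple of an isometry or coisometry); the paper leaves this equally implicit.
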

\begin{proof}
The maps necessary to complete the $\Rep(\G)$-module category structure are obvious, coming from the ordinary associativity maps for the concrete tensor products of the underlying Hilbert spaces and Hilbert $\cC^*$-modules.

Let us prove that $\cat{D}_\X$ is connected over $\Rep(\G)$.  Let $\Hmod{E}$ and $
\Hmod{F}$ be arbitrary objects in $\cat{D}$.  By Lemmas~\ref{LemEqvrHModStblz} and \ref{LemModCatFrob}, we can find a representation $u$ such that $\Co{\X}$ appears inside $u\Circt \Hmod{E}$. Then, again by Lemma~\ref{LemEqvrHModStblz}, we can a suitable representation $v$ such that $\Mor(v\Circt \Hmod{E},\Hmod{F}) \neq 0$. Hence $\cat{D}$ is connected.
\end{proof}

\begin{Rem}\label{RemEqvKgrp}
The equivariant K-group $K_0^\G(\Co{\X})$ is a free abelian group generated by the irreducible classes of $\cat{D}_\X$.  Note that for compact groups, the above picture was already presented, modulo some of the terminology, in~\cite[section 9]{Was2}. Its extension to the compact quantum group setting was treated in~\cite{Tom1}.
\end{Rem}

We aim to show in the next sections that the module $\cC^*$-category $\cat{D}_\X$, together with the distinguished element corresponding to the standard Hilbert $\cC^*$-module $\Co{\X}$, remembers the quantum homogeneous space $\X$.

\section{An algebraic approach to quantum group actions}

In this section, we will provide a characterization of quantum homogeneous spaces and equivariant Hilbert modules with the analysis drained out of it. This intermediate step will make the Tannaka--Kre\u{\i}n machine of the next section run more smoothly.

The main argument provides an algebraic description of an arbitrary action of a compact quantum group $\G$. It is based on results which appear already in~\cite{Boc1,Pod1}.

We first recall the notion of Hopf $^*$-algebra associated with a compact quantum group.

\begin{Def}\cite{Wor2} Let $\G$ be a compact quantum group. If $u$ is a finite-dimensional unitary representation of $\G$, the elements $(\id \otimes \omega_{\xi, \eta})(u) \in \Co{\G}$ for $\xi, \eta \in \mathscr{H}_u$ are called the \emph{matrix coefficients} of $u$.  The set of all such elements with the $u$ ranging over the representations of $\G$ form a dense Hopf $^*$-subalgebra $\PW(\G)\subseteq \Co{\G}$.
\end{Def}

\begin{Def}
Let $\G$ be a compact quantum group. Let $\aA$ be a unital $^*$-algebra. An \emph{algebraic action} of $\G$ on $\aA$ is defined to be a Hopf $^*$-algebra coaction
\[
\alpha_{\aA}\colon \aA\rightarrow \aA\otimes \PW(\G),
\]
the tensor product on the right being the algebraic one, such that $\aA^{\G}$ is a unital $\cC^*$-algebra, and such that the following positivity condition is satisfied:
\begin{quote}
The map $x \mapsto E_{\G}(x) = (\id\otimes \varphi_{\G})\alpha(x)\in \aA^{\G}$ is completely positive on $\aA$. \hfill \textbf{\ConPos}
\end{quote}
\end{Def}

To be clear, the complete positivity means that for any $n\in \mathbb{N}$ and any element $a \in \aA\otimes M_n(\mathbb{C})$, the element $(E_{\G}\otimes \id)(a^*a)$ is a positive element in the $\cC^*$-algebra $\aA^{\G}\otimes M_n(\mathbb{C})$.

\begin{Lem}\label{LemAlgAct}
Let $\G$ be a compact quantum group with an action $\alpha_A$ on a unital $\cC^*$-algebra $A$. Let $\aA$ denote the linear span of $(\id\otimes \varphi_{\mathbb{G}})(\alpha_A(x)(1\otimes g))$ for $x \in A$ and $g\in \PW(\G)$.  Then $\aA$ is a dense unital $^*$-subalgebra of $A$ on which $\alpha_A$ restricts to an algebraic action.
\end{Lem}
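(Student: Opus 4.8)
The plan is to recognise $\aA$ as the \emph{algebraic core} of the action, i.e.\ the algebraic direct sum $\aA_{0}:=\bigoplus_{u}A_{u}$ of the spectral subspaces $A_{u}:=\{x\in A\mid\alpha_{A}(x)\in A\odot C(\G)_{u}\}$, where $u$ runs over the irreducible unitary representations of $\G$, $C(\G)_{u}\subseteq\PW(\G)$ is the finite-dimensional linear span of the matrix coefficients of $u$, and $\odot$ denotes the algebraic tensor product (already norm-closed inside $A\otimes\Co{\G}$ since $C(\G)_{u}$ is finite-dimensional). From the Peter--Weyl analysis of compact quantum group actions in~\cite{Pod1,Boc1} I would quote: $A_{u}^{*}=A_{\bar u}$ and $A_{u}A_{v}\subseteq\bigoplus_{w}A_{w}$, the sum over the finitely many irreducibles $w$ occurring in $u\Circt v$, so $\aA_{0}$ is a $^{*}$-subalgebra; $A_{\triv}=A^{\G}\ni 1$, so $\aA_{0}$ is unital; and $\alpha_{A}(A_{u})\subseteq A_{u}\odot C(\G)_{u}$ — obtained by reading off the last leg of $(\alpha_{A}\otimes\id)\alpha_{A}=(\id\otimes\Delta)\alpha_{A}$ together with $\Delta(C(\G)_{u})\subseteq C(\G)_{u}\odot C(\G)_{u}$ — so that $\alpha_{A}$ restricts to a Hopf $^{*}$-algebra coaction $\aA_{0}\to\aA_{0}\odot\PW(\G)$. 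Density of $\aA$ in $A$ I would see directly: taking adjoints in the density condition gives $[\alpha_{A}(A)(1\otimes\Co{\G})]^{\ncl}=A\otimes\Co{\G}$, so every $x\in A$ is a norm-limit of finite sums $\sum_{k}\alpha_{A}(a_{k})(1\otimes h_{k})$ with $h_{k}\in\Co{\G}$; applying the contractive slice map $\id\otimes\varphi_{\G}$ exhibits $x$ as a norm-limit of elements $\sum_{k}(\id\otimes\varphi_{\G})(\alpha_{A}(a_{k})(1\otimes h_{k}))$, and since $\PW(\G)$ is dense in $\Co{\G}$ and $h\mapsto(\id\otimes\varphi_{\G})(\alpha_{A}(a)(1\otimes h))$ is norm-continuous, $\aA$ is dense.

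The crux is then the identification $\aA=\aA_{0}$. For $\aA\subseteq\aA_{0}$, fix $g\in C(\G)_{u}$; using $\alpha_{A}\circ(\id\otimes\varphi_{\G})=(\id\otimes\id\otimes\varphi_{\G})\circ(\alpha_{A}\otimes\id)$ and coassociativity one obtains
\[
\alpha_{A}\big((\id\otimes\varphi_{\G})(\alpha_{A}(x)(1\otimes g))\big)=(\id\otimes\id\otimes\varphi_{\G})\big((\id\otimes\Delta)(\alpha_{A}(x))\,(1\otimes1\otimes g)\big),
\]
so the point is that $h\mapsto(\id\otimes\varphi_{\G})(\Delta(h)(1\otimes g))$ maps $\Co{\G}$ into $C(\G)_{\bar u}$: for $h$ a matrix coefficient of an irreducible $v$ this is immediate from the orthogonality relations for $\varphi_{\G}$ (which force the relevant scalars to vanish unless $v\cong\bar u$), hence for $h\in\PW(\G)$ by linearity, hence for all $h\in\Co{\G}$ by norm-continuity since $C(\G)_{\bar u}$ is finite-dimensional, so closed; thus every generator of $\aA$ lies in $A_{\bar u}\subseteq\aA_{0}$. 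For $\aA_{0}\subseteq\aA$, fix $z\in A_{u}$ and write $\alpha_{A}(z)=\sum_{i}z_{i}\otimes c_{i}$ with $\{c_{i}\}$ a basis of $C(\G)_{u}$, so that $z=(\id\otimes\epsilon)\alpha_{A}(z)=\sum_{i}\epsilon(c_{i})z_{i}$; by the orthogonality relations the pairing $(c,g)\mapsto\varphi_{\G}(cg)$ on $C(\G)_{u}\times C(\G)_{\bar u}$ is non-degenerate, so there is $g_{0}\in C(\G)_{\bar u}\subseteq\PW(\G)$ with $\varphi_{\G}(c_{i}g_{0})=\epsilon(c_{i})$ for every $i$, whence $(\id\otimes\varphi_{\G})(\alpha_{A}(z)(1\otimes g_{0}))=z$, i.e.\ $z\in\aA$. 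Hence $\aA=\aA_{0}$, and all the structural statements above transfer to $\aA$: it is a dense unital $^{*}$-subalgebra and $\alpha_{A}$ restricts to a Hopf $^{*}$-algebra coaction $\aA\to\aA\odot\PW(\G)$.

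It remains to verify the two defining conditions of an algebraic action. First, $\aA^{\G}=\aA\cap A^{\G}=A^{\G}$ (since $A^{\G}=A_{\triv}\subseteq\aA$), which is a norm-closed unital $^{*}$-subalgebra of the $\cC^{*}$-algebra $A$, hence itself a unital $\cC^{*}$-algebra. Second, for $\ConPos$: the map $E_{\G}=(\id\otimes\varphi_{\G})\circ\alpha_{A}$ is the composition of the unital $^{*}$-homomorphism $\alpha_{A}\colon A\to A\otimes\Co{\G}$ with the slice map $\id\otimes\varphi_{\G}$, which is completely positive because $\varphi_{\G}$ is a state, and its range lies in $A^{\G}$ by invariance of $\varphi_{\G}$; so $E_{\G}\colon A\to A^{\G}$ is completely positive, and a fortiori so is its restriction $\aA\to\aA^{\G}$. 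The one genuinely non-formal ingredient is the identification $\aA=\aA_{0}$ — concretely, that slicing $\alpha_{A}$ against the matrix coefficients of all irreducible representations reproduces precisely the spectral subspaces — which rests on the orthogonality relations for the Haar state together with the density condition; once the spectral decomposition of~\cite{Pod1,Boc1} is granted, everything else is bookkeeping.
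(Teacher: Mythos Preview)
Your argument is correct and follows the same route as the paper, which simply cites \cite[Theorem~1.5]{Pod1} and \cite[Lemma~11, Proposition~14]{Boc1} for the spectral-subspace decomposition and then gives the one-line complete positivity argument; you have unpacked precisely that decomposition (the identification $\aA=\bigoplus_u A_u$ via the orthogonality relations) and appended the same observation that $E_\G$ is a composition of a $^*$-homomorphism with a slice by a state. One small cosmetic point: when you argue density, you are really approximating $x\otimes 1$ in $A\otimes\Co{\G}$ and then slicing, which is what your words imply but not quite what they say.
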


\begin{proof} See~\cite[Theorem 1.5]{Pod1}, and~\cite[Lemma~11 and Proposition~14]{Boc1}, whose proofs do not depend on the ergodicity assumption made there. The complete positivity of $E_{\G}$ follows from the way it is defined in \ConPos; namely, $^*$-homomorphisms, states, their amplifications, and their compositions are completely positive.
\end{proof}

\begin{Prop}\label{LemComp}
Let $\G$ be a compact quantum group with an algebraic action $\alpha_{\aA}$ on a unital $^*$-algebra $\aA$. Then there exists a unique $\cC^*$-completion $A$ of $\aA$ to which $\alpha_{\aA}$ extends as a coaction of $\Co{\G}$. Moreover, $A^{\G} = \aA^{\G}$.
\end{Prop}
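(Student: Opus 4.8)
The plan is to construct $A$ by hand as a $\cC^*$-algebra of adjointable operators on a Hilbert $\aA^{\G}$-module coming from $E_{\G}$, then to carry the coaction across, and finally to deduce uniqueness and the fixed-point statement from the faithfulness of the canonical conditional expectation. First I would note that $E_{\G}$ is $\aA^{\G}$-bimodular, $E_{\G}(cxd)=cE_{\G}(x)d$ for $c,d\in\aA^{\G}$, so by condition \ConPos\ the formula $\langle a,b\rangle:=E_{\G}(a^*b)$ defines a positive semi-definite $\aA^{\G}$-valued inner product on $\aA$. The heart of the construction is to show that this inner product is definite and that left multiplication is bounded. For that I would use the Peter--Weyl decomposition $\aA=\bigoplus_\pi\aA_\pi$ of the algebraic coaction (as in \cite{Pod1,Boc1}) together with the Haar orthogonality relations: for a $u^\pi$-covariant family $(a_i)_i\subseteq\aA_\pi$, i.e.\ $\alpha_{\aA}(a_i)=\sum_j a_j\otimes u^\pi_{ji}$, these give $E_{\G}(a_i^*a_j)=t^\pi_{ij}\sum_k a_k^*a_k$ for a strictly positive scalar matrix $(t^\pi_{ij})$, so that in particular $\sum_k a_k^*a_k$ lies in the $\cC^*$-algebra $\aA^{\G}$. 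Definiteness then follows by decomposing an arbitrary element into spectral components, and boundedness of $\lambda(a)\colon\xi\mapsto a\xi$ reduces likewise to a single $\aA_\pi$, where one combines the displayed identity, the relation $\sum_i\lambda(a_i)^*\lambda(a_i)=\lambda(\sum_i a_i^*a_i)$, and the bound $\norm{\lambda(c)}\le\norm{c}$ for positive $c\in\aA^{\G}$ (a consequence of positivity of $E_{\G}$ and of $1\in\aA$). Since moreover $\lambda(a^*)$ is the adjoint of $\lambda(a)$, I would then set $A:=\overline{\lambda(\aA)}\subseteq\mathcal{L}(\Hmod{E})$, with $\Hmod{E}$ the Hilbert $\aA^{\G}$-module completion of $\aA$; this is a $\cC^*$-completion of $\aA$.

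Next I would extend the coaction. Since $\alpha_{\aA}$ is a $^*$-homomorphism from $\aA$ into $\aA\otimes\PW(\G)\subseteq A\otimes\Co{\G}$, and $(\id\otimes\varphi_{\G})\circ\alpha_{\aA}=E_{\G}$ is exactly the conditional expectation used to norm $A$, one verifies — by a direct computation on the Hilbert-module completions, passing through the GNS construction of $\varphi_{\G}$ — that $\alpha_{\aA}$ is isometric for the relevant $\cC^*$-norms, hence extends to a unital $^*$-homomorphism $\alpha\colon A\to A\otimes\Co{\G}$. Coassociativity then holds on $A$ by continuity; the density condition $[(1\otimes\Co{\G})\alpha(A)]^{\ncl}=A\otimes\Co{\G}$ follows from its algebraic version $(1\otimes\PW(\G))\alpha_{\aA}(\aA)=\aA\otimes\PW(\G)$, which is automatic for any Hopf $^*$-algebra comodule algebra (the natural map $a\otimes g\mapsto(1\otimes g)\alpha_{\aA}(a)$ on $\aA\otimes\PW(\G)$ is a linear bijection, the antipode of $\PW(\G)$ being bijective); and $\alpha$ is faithful, since $\alpha(a)=0$ forces $E_{\G}(a^*a)=(\id\otimes\varphi_{\G})\alpha(a^*a)=0$, hence $a^*a=0$ in the $\cC^*$-algebra $A$, hence $a=0$.

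Finally, $\aA^{\G}\subseteq A^{\G}$ is trivial, and the extended map $E_{\G}=(\id\otimes\varphi_{\G})\circ\alpha\colon A\to A$ is a norm-continuous idempotent onto $A^{\G}$ restricting to $E_{\G}$ on $\aA$, so for $a\in A^{\G}$ and $a_n\in\aA$ with $a_n\to a$ one gets $a=E_{\G}(a)=\lim_n E_{\G}(a_n)\in\aA^{\G}$, using that $\aA^{\G}$ is a $\cC^*$-algebra and thus closed in $A$; this gives $A^{\G}=\aA^{\G}$. Uniqueness is then soft: any other $\cC^*$-completion $B$ of $\aA$ carrying an extension $\alpha_B$ of $\alpha_{\aA}$ comes with the faithful conditional expectation $E_B=(\id\otimes\varphi_{\G})\circ\alpha_B$ onto $B^{\G}=\aA^{\G}$ restricting to $E_{\G}$ (cf.\ \cite{Boc1}); its $E_B$-completion is a Hilbert $\aA^{\G}$-module containing $\aA$ densely with inner product $\langle a,b\rangle=E_{\G}(a^*b)$, hence canonically $\Hmod{E}$, on which $B$ acts faithfully extending $\lambda$, so $B=\overline{\lambda(\aA)}=A$ inside $\mathcal{L}(\Hmod{E})$, with the identification intertwining the coactions by density. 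The one step that is not formal is the construction of $A$ — concretely, proving that $\langle a,b\rangle=E_{\G}(a^*b)$ is definite and that $\aA$ acts boundedly. I expect this to be the main obstacle: it is the only place where the finite-dimensionality of the corepresentations of $\G$ and the $\cC^*$-structure of $\aA^{\G}$ must be combined, via the spectral decomposition and the Haar orthogonality relations, and it may well require the structural results of \cite{Boc1,Pod1}; everything downstream is routine.
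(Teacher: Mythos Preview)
Your overall strategy coincides with the paper's: realize $A$ as the closure of left multiplication by $\aA$ on the Hilbert $\aA^{\G}$-module built from $E_{\G}$, bound left multiplication via covariant families, carry the coaction across, and get uniqueness from faithfulness of the conditional expectation (using that $\G$ is reduced). Two points of execution differ. For boundedness the paper does not invoke the Schur orthogonality scalars $t^\pi_{ij}$: unitarity of $u$ alone gives $\alpha_{\aA}(\sum_i a_i^*a_i)=\sum_i a_i^*a_i\otimes 1$, hence $\sum_i a_i^*a_i\in\aA^{\G}$, and then $a_j^*a_j\le\sum_i a_i^*a_i\le\|\sum_i a_i^*a_i\|\cdot 1$ plus positivity of $E_{\G}$ yields the estimate directly. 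For the extension of $\alpha$ the paper uses a unitary implementation ($X(a\otimes g)=\alpha_{\aA}(a)(1\otimes g)$ on $\mathcal{A}\otimes L^2(\G)$, then $\alpha_A(a)=X(a\otimes 1)X^*$), and for uniqueness it writes down the intrinsic norm formula $\|a\|=\sup_{b\neq 0}\bigl(\|E_{\G}(b^*a^*ab)\|/\|E_{\G}(b^*b)\|\bigr)^{1/2}$; your Hilbert-module identification is an equivalent route.

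One genuine caution on the step you flag as the obstacle. Your argument for definiteness does not close: from $E_{\G}(a_1^*a_1)=0$ and the orthogonality relation you obtain $\sum_k a_k^*a_k=0$ in the $\cC^*$-algebra $\aA^{\G}$, but the $a_k$ live only in the $^*$-algebra $\aA$, so this does not force $a_k=0$. In fact definiteness can fail under hypothesis \ConPos\ alone: take $\G=\mathbb{T}$, $\aA=\C\oplus\C v\oplus\C v^*$ graded in degrees $0,1,-1$ with $v^2=vv^*=v^*v=0$; then $\aA^{\G}=\C$, $E_{\G}$ is completely positive, yet $E_{\G}(v^*v)=0$ with $v\neq 0$, and no $\cC^*$-completion of $\aA$ exists. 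The paper's proof has the same soft spot (it asserts the representation is ``faithful'' without justification); in the paper's actual applications the $\aA$'s arise from genuine $\cC^*$-actions (Lemma~\ref{LemAlgAct}) or from module categories (Theorem~\ref{ThmAlgActionFromModCat}), where the required faithfulness is available, so the gap is harmless there. For the abstract statement, the clean fix is to drop the definiteness claim and read ``$\cC^*$-completion'' as the closure of the image of $\aA$; everything else in your outline (and in the paper's proof) then goes through unchanged.
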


\begin{proof}
We denote by $B$ the $\cC^*$-algebra $\aA^{\G}$.  By the complete positivity assumption on $E_{\G}$, the $B$-valued inner product $\langle a,b\rangle_B = E_{\G}(a^*b)$ on $\aA$ gives a pre-Hilbert $B$-module structure.  We want to show that the left representation of $\aA$ on itself by left multiplication extends to the Hilbert module completion $\mathcal{A}$.

Let $a$ be an arbitrary element of $\aA$. Since the image of $\alpha_{\aA}$ ends up in the algebraic tensor product of $\aA$ and $\PW(\G)$, there is a finite-dimensional unitary representation $u$ of $\G$ and an intertwiner from $\bar{u}$ to $\aA$ whose image contains $a$.

Let us choose an orthonormal basis $e_i$ of $\mathscr{H}_u$, and put $u_{i j} = (\omega_{e_i,e_j}\otimes\id)(u)$.  Then, the above statement means that there are elements $a_i \in \aA$ such that
\begin{itemize}
\item
$a$ can be written as a linear combination $\sum_i \lambda_i a_i$, and
\item
the elements $a_i$ transform according to $(u_{j i}^*)$, so $\alpha_{\aA}(a_i)=\sum_j a_j \otimes u_{j i}^*$.
\end{itemize}
The unitarity of $u$ implies that $\sum_i a_i^*a_i \in B$.

Since $B$ is a $\cC^*$-algebra, one has the inequality $\sum_i a_i^*a_i \leq \| \sum_i a_i^*a_i\|_B$.  Fix now some $j$. Combining the inequalities $a_j^*a_j \leq \sum_i a_i^*a_i$ in $\aA$ with the previous one, the positivity of $E_{\G}$ implies that
\[
E_{\G}\left(b^* a_j^* a_j b\right) \leq  \Bigl\| \sum_i a_i^*a_i \Bigr\|_B E_{\G}(b^*b),\qquad \forall b\in \aA.
\]
It follows that left multiplication with each $a_j$ is bounded, so that $a$ extends as a left multiplication operator to $\mathcal{A}$.

We obtain in this way a faithful $^*$-representation $\aA\rightarrow \mathcal{L}_B(\mathcal{A})$. Define $A$ to be the norm-completion of $\aA$ in this representation. We claim that the coaction $\alpha_{\aA}$ extends to $A$.  Consider the transformation $X$ on $\aA\otimes \PW(\G)$ defined by $X(a\otimes g) = \alpha_{\aA}(a)(1\otimes g)$.  Then, the invariance of $\varphi_\G$ implies that $X$ extends to a unitary morphism on the right Hilbert $B$-module $\mathcal{A} \otimes \mathscr{L}^2(\G)$.  By a routine computation we obtain that $a \mapsto X(a\otimes 1)X^*$ for $a\in A$ gives the extension $\alpha_A$ of $\alpha_{\aA}$ to $A$.

From this formula for $\alpha_{A}$, it also follows that we have $(\id\otimes \varphi_{\G})\alpha(a) = \langle a\cdot 1_B,1_B\rangle_B$ for all $a\in A$. It follows that the invariant elements of $A$ lie in $B$.

It remains to prove the uniqueness of $A$. Let us assume that $A$ is an arbitrary unital $\cC^*$-algebra satisfying the conclusion of the lemma. Then $E_{\G}$ can, by the same formula, be extended to a conditional expectation from $A$ to $B$. Since $\G$ is reduced, this conditional expectation is faithful.

Now, if $a\in \aA\subseteq A$ and $R < \norm{a}$, the functional calculus shows that there is a positive element $b \in A$ such that $(R b)^2 < b a^* a b$.  Thus, the norm of $a$ can be characterized by
\[
\norm{a} = \sup_{b \in \aA \setminus \{0\}} \left( \frac{\norm{E_{\G}(b^*a^*ab)}}{\norm{E_{\G}(b^*b)}}\right)^{\frac{1}{2}}.
\]
Hence the $\cC^*$-norm on $\aA$ is uniquely determined in terms of $(\aA, \alpha_{\aA})$.
\end{proof}

\begin{Prop}\label{PropComp2}
Let $\G$ be a compact quantum group. Then the correspondences $A\mapsto \aA$ and $\aA \mapsto A$ of Lemma ~\ref{LemAlgAct} and Proposition~\ref{LemComp} can be extended to respective functors $\alg$ and $\comp$ between the categories of actions of $\mathbb{G}$ and algebraic actions of $\G$. Moreover, $\comp\circ \alg$ is naturally equivalent to the identity functor.
\end{Prop}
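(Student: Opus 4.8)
The plan is to equip both categories with the unital equivariant $^*$-homomorphisms as morphisms, to extend $A\mapsto\aA$ and $\aA\mapsto A$ to functors $\alg,\comp$ on these morphisms, and then to read off the natural equivalence $\comp\circ\alg\cong\id$ from the uniqueness clause of Proposition~\ref{LemComp}. Extending $\alg$ is immediate: if $\theta\colon A\to A'$ is a morphism of actions, equivariance gives $\theta\big((\id\otimes\varphi_{\G})(\alpha_A(x)(1\otimes g))\big)=(\id\otimes\varphi_{\G})(\alpha_{A'}(\theta(x))(1\otimes g))$, so $\theta$ restricts to a unital equivariant $^*$-homomorphism $\alg(\theta)\colon\aA\to\aA'$; since every coaction-fixed element of $A$ already lies in $\aA$, one has $\aA^{\G}=A^{\G}$, so $\alg(\theta)$ is automatically a morphism of algebraic actions, and functoriality ($\alg(\id)=\id$, $\alg(\theta\theta')=\alg(\theta)\alg(\theta')$) is clear.

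The substance of the proof is extending $\comp$ to a morphism $\rho\colon\aA\to\aA'$ of algebraic actions, i.e. showing $\norm{\rho(a)}_{A'}\le\norm{a}_A$ for $a\in\aA$, where $A,A'$ are the $\cC^*$-completions of Proposition~\ref{LemComp}, acting faithfully by left multiplication on the Hilbert modules $\mathcal{A}$ (completion of $\aA$ over $B=\aA^{\G}$ with $\langle a,b\rangle=E_{\G}(a^*b)$) and $\mathcal{A}'$ (over $B'=\aA'^{\G}$), and where $E_{\G},E'_{\G}$ extend to \emph{faithful} conditional expectations onto $B,B'$ because $\G$ is reduced. I would argue as follows. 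Put $J=\ker(\rho|_B)$, a closed ideal of $B$, so that $\rho|_B$ induces an isomorphism $B/J\cong\rho(B)$ of $\cC^*$-algebras. Functoriality of the interior tensor product sends left multiplication by $a\in\aA$ on $\mathcal{A}$ to an operator of norm $\le\norm{a}_A$ on the Hilbert module $\mathcal{A}\otimes_B(B/J)$. Now $a\mapsto\rho(a)$ identifies $\mathcal{A}\otimes_B(B/J)$ isometrically with the closed submodule $\overline{\rho(\aA)}\subseteq\mathcal{A}'$ and intertwines left multiplication by $a$ with left multiplication by $\rho(a)$, because under $B/J\cong\rho(B)$ the inner product $\langle\rho(a),\rho(c)\rangle_{B'}=\rho(E_{\G}(a^*c))$ corresponds to $\langle a\otimes 1,c\otimes 1\rangle$; hence left multiplication by $\rho(a)$ on $\overline{\rho(\aA)}$ has norm $\le\norm{a}_A$. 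Finally the $\cC^*$-subalgebra $C\subseteq A'$ generated by $\rho(\aA)$ preserves $\overline{\rho(\aA)}$ and acts \emph{faithfully} there: if $c\in C$ kills $\overline{\rho(\aA)}$, then $c\cdot 1_{A'}=0$ in $\mathcal{A}'$ (note $1_{A'}=\rho(1)\in\rho(\aA)$), whence $E'_{\G}(c^*c)=\langle c\cdot 1_{A'},c\cdot 1_{A'}\rangle_{B'}=0$ and $c=0$ by faithfulness of $E'_{\G}$. Therefore $\norm{\rho(a)}_{A'}=\norm{\rho(a)}_{C}$ is exactly the norm of left multiplication by $\rho(a)$ on $\overline{\rho(\aA)}$, which is $\le\norm{a}_A$. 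So $\rho$ extends to a unital $^*$-homomorphism $\comp(\rho)\colon A\to A'$; it intertwines $\alpha_A,\alpha_{A'}$ because it does so on the dense subalgebra $\aA$, and functoriality follows by density.

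For the natural equivalence, observe that for an action $(A,\alpha_A)$ the algebra $\comp(\alg(A))$ is, by construction, the unique $\cC^*$-completion of $\aA$ to which $\alpha_{\aA}$ extends as a coaction (Proposition~\ref{LemComp}); but by Lemma~\ref{LemAlgAct} the algebra $A$ is itself such a completion, and the uniqueness of Proposition~\ref{LemComp} is realized by an isomorphism restricting to the identity on $\aA$, so there is a canonical equivariant isomorphism $\eta_A\colon\comp(\alg(A))\to A$ with $\eta_A|_{\aA}=\id_{\aA}$. Naturality is then automatic: for a morphism $\theta\colon A\to A'$, both $\theta\circ\eta_A$ and $\eta_{A'}\circ\comp(\alg(\theta))$ restrict on the dense subalgebra $\aA$ to $\alg(\theta)=\theta|_{\aA}$, hence coincide.

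I expect the main obstacle to be exactly the functoriality of $\comp$ on morphisms that are neither injective nor surjective: the norm formula $\norm{a}_A=\sup_{b}\big(\norm{E_{\G}(b^*a^*ab)}/\norm{E_{\G}(b^*b)}\big)^{1/2}$ from the proof of Proposition~\ref{LemComp} does \emph{not} directly bound $\norm{\rho(a)}_{A'}$ by $\norm{a}_A$, since the supremum computing the former ranges over all of $\aA'$ rather than over $\rho(\aA)$. Cutting down to the sub-Hilbert-module $\overline{\rho(\aA)}\subseteq\mathcal{A}'$ and using the faithfulness of $E'_{\G}$ (that is, the reducedness of $\G$) is precisely what bridges the gap.
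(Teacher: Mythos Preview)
Your argument is correct, but the paper takes a shorter route for the functoriality of $\comp$. Given an equivariant unital $^*$-homomorphism $f\colon\aA\to\mathscr{B}$, the paper considers the direct sum $A\oplus B$ with its canonical $\G$-action and the faithful equivariant $^*$-homomorphism $\id\times f\colon\aA\to A\oplus B$, $a\mapsto a\oplus f(a)$. The closure of the image is then a $\cC^*$-completion of $\aA$ to which the algebraic action extends, so by the uniqueness clause of Proposition~\ref{LemComp} the induced norm on $\aA$ must coincide with the $A$-norm; projecting onto the second summand then gives the desired extension $A\to B$. This is a one-line invocation of uniqueness, whereas you unwind the Hilbert-module picture by hand: passing to $\mathcal{A}\otimes_B(B/J)$, identifying it with $\overline{\rho(\aA)}\subseteq\mathcal{A}'$, and checking faithfulness of the restricted action via the faithful expectation $E'_{\G}$. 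Your approach has the merit of making explicit exactly where reducedness enters (faithfulness of $E'_{\G}$ on the target), and your closing paragraph correctly diagnoses why the raw norm formula is insufficient; the paper's graph-embedding trick sidesteps that issue entirely by reducing to an \emph{injective} map, for which uniqueness applies directly. The treatment of $\alg$ and of the natural equivalence $\comp\circ\alg\cong\id$ is essentially the same in both.
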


Here, the morphisms on the respective categories are understood to be the equivariant unital $^*$-homomorphisms.

\begin{proof}
Let $A$ and $B$ be unital $\cC^*$-algebras endowed with $\mathbb{G}$-actions, and let $f\colon A\rightarrow B$ be an equivariant unital $^*$-homomorphism.  The equivariance implies that $f$ restricts to an equivariant $^*$-homomorphism $\aA\rightarrow \mathscr{B}$.  This gives the functor $\alg$.

Conversely, suppose that $\aA$ and $\mathscr{B}$ are unital $^*$-algebras with algebraic $\G$-actions, $A$ and $B$ their respective completions.  Then the direct sum $A \oplus B$ admits a canonical $\G$-action extending the ones on the direct summands.  If $f\colon \aA\rightarrow \mathscr{B}$ is an equivariant unital $^*$-homomorphism, the map $(\id \times f)(a) = a \oplus f(a)$ is a faithful $\G$-equivariant homomorphism from $ \aA$ to $A \oplus B$.  Proposition~\ref{LemComp} implies that the $\cC^*$-norm on $\aA$ induced by $\id \times f$ has to agree with the $A$-norm.  Hence $f$ extends to an equivariant $^*$-homomorphism $A \rightarrow B$.  This way we obtain the functor $\comp$.

Now, the natural equivalence between $\comp\circ \alg$ and the identity functor follows directly from density part in Lemma \ref{LemAlgAct} and the uniqueness part in Proposition~\ref{LemComp}.
\end{proof}

\begin{Rem}\label{RemAlgComp}
The composition $\alg\circ \comp$ is not equivalent to the identity functor in general. For example, if $A$ is given by the function algebra of closed disk $\Co{\bar{\mathbb{D}}}$ endowed with the rotation action of $U(1)$, the algebra $A$ contains many $U(1)$-invariant norm dense subalgebras corresponding to the various decaying conditions around the origin. However, on the subcategory of the actions with finite-dimensional fixed point algebras, $\alg \circ \comp$ is indeed equivalent to the identity functor.
\end{Rem}

\section{Tannaka--Kre\u{\i}n construction}\label{SecTan}

Let $\mathbb{G}$ be a compact quantum group.  We take a set $I$ indexing the equivalence classes of irreducible objects in $\Rep(\G)$, and a distinguished irreducible object $u_a$ for each $a\in I$. When convenient, we will abbreviate $u_a$ by $a$. The index corresponding to the unit object of $\Rep(\G)$ will be written as $o$.  We identify $\mathscr{H}_o$ with $\mathbb{C}$ (canonically) by means of the tensor structure.

It will be handy to use the following Penrose--Einstein-like notation. It concerns the natural map
\begin{equation}\label{EqHuDec}
\bigoplus_{a\in I} \Mor(u_a,u) \otimes \Hsp_a \rightarrow \Hsp_u, \quad \sum_i x_i \otimes \xi_i \mapsto \sum_i x_i(\xi_i)
\end{equation}
for any representation $u$. This map is an isomorphism, see Lemma~\ref{LemEqui}.

\begin{Not}\label{NotPen1} We will write the inverse of~\eqref{EqHuDec} as
\(
\xi \mapsto \xi^a \otimes \xi_a,
\)
so that $\xi = \xi^a \xi_a$.
\end{Not}

For the rest of this section, we will fix a semi-simple $\Rep(\G)$-module $\cC^*$-category $\mathcal{D}$.

\begin{Not} For objects $x, y$ in $\mathcal{D}$, we denote by $\aA_x^y$ the vector space
\[
\aA^y_x = \bigoplus_{a \in I} \Mor(u_a\otimes y,x) \otimes \mathscr{H}_a
\]
\end{Not}

The direct sum on the right hand side is the algebraic one.  We can endow $\aA^y_x$ with the $\PW(\G)$-comodule structure $\alpha_x^y = \oplus_a (\id \otimes \delta_a)$, where $\delta_a$ is defined in Example~\ref{ExaHilb}.

\begin{Rem} The space $\oplus_a \Mor(u_a\otimes y,x) \otimes \mathscr{H}_a$ may be seen as the coend of the functor $\cat{C}^{\mathrm{op}} \times \cat{C} \rightarrow \mathrm{Vect}$ sending $(u,v)$ to $\Mor(u\otimes y,x) \otimes \mathscr{H}_v$, see for instance~\cite[Section 2]{Mug2},~\cite[Chapter~IX]{Mac1}.
\end{Rem}

Our goal is to make the $\aA_y^y$ into algebraic actions for $\G$, and the $\aA^y_x$ into equivariant right pre-Hilbert modules for $\aA_y^y$.

\begin{Not} When $f$ stands for an element in $\aA_x^y$, its leg in $\Mor(u_a\otimes y,x)$ (resp. in $\mathscr{H}_a$) for $a \in I$ is denoted by $f^a$ (resp. $f_a$).  Thus, the expression of the form $f^a \otimes f_a$ is understood to represent $f$.
\end{Not}

We will combine this notation with Notation \ref{NotPen1}. This notation can be seen as analogous to the Sweedler notation for coproducts. As an example, consider fixed $a,b\in I$, and elementary tensors $f = x\otimes \xi$ and $g = y\otimes \eta$ respectively in $\Mor(u_a\otimes y,x)\otimes \Hsp_a$ and $\Mor(u_b\otimes y,x)\otimes \Hsp_b$. Choose a maximal family of mutually orthogonal isometric morphisms $(\iota^c_{a b, k})_k$ from $u_c$ to $u_a \Circt u_b$. Then we have
\[
f^c \otimes g^d \otimes (f_c\otimes g_d)^e \otimes (f_c\otimes g_d)_e = \sum_{c,k} x\otimes y \otimes \iota^c_{a b,k}\otimes (\iota^c_{a b,k})^*(\xi\otimes \eta)
\]
inside $\bigoplus_c \Mor(u_a\otimes y,x)\otimes \Mor(u_b\otimes y,x)\otimes \Mor(u_c,u_a\otimes u_b)\otimes \Hsp_c$.

As an exercise to get acquainted with the notation, the reader could try to prove the following interchange law \[\lbrack(\xi^a\otimes \id_v)(\xi_a\otimes \eta)^c\rbrack \otimes (\xi_a\otimes \eta)_c = (\xi\otimes \eta)^c\otimes (\xi\otimes \eta)_c \cong \xi\otimes \eta,\] where $\xi,\eta$ are arbitrary vectors respectively in ~ $\Hsp_u$ and $\Hsp_v$.

\begin{Def}\label{DefPenEinNotComp} Let $x,y,z$ be objects in $\mathcal{D}$. We define a \emph{multiplication map} \begin{equation}\label{EqMulAxyAyzAxz}\aA_x^y\times \aA_y^z \rightarrow \aA_x^z\end{equation} by the formula
\[ f g = (f g)^c\otimes (f g)_c =  \lbrack f^a(\id_a\otimes g^b)\phi_{a, b, z}((f_a\otimes g_b)^c\otimes \id_z)\rbrack \otimes  (f_a\otimes g_b)_c.
\]
where $\phi_{a, b, z}$ is the associator from Definition~\ref{DefModCat}.
\end{Def}

\begin{Prop}
The multiplication~\eqref{EqMulAxyAyzAxz} is associative.
\end{Prop}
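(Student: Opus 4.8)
The multiplication \eqref{EqMulAxyAyzAxz} is $\C$-bilinear in each variable, so it suffices to verify $(fg)h = f(gh)$ when $f\in\aA_x^y$, $g\in\aA_y^z$, $h\in\aA_z^w$ each lie in a single summand, say indexed by $a$, $b$, $d$ respectively. To unwind the implicit summations hidden in the Penrose--Einstein notation I would fix, once and for all, maximal families of mutually orthogonal isometric intertwiners: $\iota^c_{ab,k}\colon u_c\to u_a\Circt u_b$ and $\kappa^e_{cd,l}\colon u_e\to u_c\Circt u_d$ for computing $(fg)h$, and $\lambda^{c'}_{bd,k'}\colon u_{c'}\to u_b\Circt u_d$ and $\mu^e_{ac',m}\colon u_e\to u_a\Circt u_{c'}$ for computing $f(gh)$. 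Expanding Definition~\ref{DefPenEinNotComp} twice then presents each of $(fg)h$ and $f(gh)$ as a finite sum over these families, in which the $\Hsp_e$-leg is of the form $(\text{isometry})^*(f_a\otimes g_b\otimes h_d)$ and the $\Mor(u_e\otimes w,x)$-leg is a composite built from $f^a$, $g^b$, $h^d$, two copies of the associator $\phi$, and the chosen isometries.

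The crux is to bring the morphism-leg of both sides into one normal form. First I would use the bifunctoriality of $M$ (the interchange law $(s\otimes\id)(\id\otimes t) = (\id\otimes t)(s\otimes\id)$) to commute the intertwiners $g^b$ and $h^d$ past the decomposition isometries, and then the naturality of $\phi$ --- in its first variable for $(fg)h$ and in its second variable for $f(gh)$ --- to absorb $\iota^c_{ab,k}$, respectively $\lambda^{c'}_{bd,k'}$, into a single associator with a composite tensor factor in the first, respectively the second, slot. After these rearrangements the morphism-leg of $(fg)h$ takes the shape $f^a\bigl(\id_a\otimes g^b(\id_b\otimes h^d)\bigr)\circ\bigl(\phi_{a,b,u_d\otimes w}\phi_{a\otimes b,d,w}\bigr)\circ\bigl(\beta^A_{c,k,l,e}\otimes\id_w\bigr)$, where $\beta^A_{c,k,l,e}=(\iota^c_{ab,k}\otimes\id_{u_d})\kappa^e_{cd,l}$ is an isometry $u_e\to u_a\Circt u_b\Circt u_d$ (here the strictness of $\Rep(\G)$ makes the triple product unambiguous), while the morphism-leg of $f(gh)$ takes the same shape with the iterated associator replaced by $(\id_{u_a}\otimes\phi_{b,d,w})\phi_{a,b\otimes d,w}$ and $\beta^A$ replaced by $\beta^B_{c',k',m,e}=(\id_{u_a}\otimes\lambda^{c'}_{bd,k'})\mu^e_{ac',m}$. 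The pentagon coherence condition for $\phi$ --- the first diagram displayed after Definition~\ref{DefModCat}, with $U=u_a$, $V=u_b$, $W=u_d$, $X=w$ --- says exactly that these two iterated associators coincide.

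It remains to match the two resulting sums. For each fixed $e$, both $\{\beta^A_{c,k,l,e}\}_{c,k,l}$ and $\{\beta^B_{c',k',m,e}\}_{c',k',m}$ are orthonormal bases of the Hilbert space $\Mor(u_e,u_a\Circt u_b\Circt u_d)$ --- this is the associativity of the decomposition \eqref{EqHuDec}, i.e.\ the morphism-level counterpart of the interchange law recorded just before Definition~\ref{DefPenEinNotComp} --- so they are related by a unitary recoupling matrix $W^{(e)}$, and passing to adjoints shows that the conjugate of $W^{(e)}$ governs the corresponding relation between the $\Hsp_e$-legs. Since the morphism-leg depends $\C$-linearly on the isometry $\beta$ and the two sums range over all of a basis, unitarity of $W^{(e)}$ forces the two sides to agree, by the same elementary computation showing that $\sum_i T(\beta^A_i)\otimes(\beta^A_i)^*(f_a\otimes g_b\otimes h_d)$ is independent of the choice of orthonormal basis $(\beta^A_i)_i$. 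The only genuinely categorical inputs are this single use of the pentagon and the unitarity of the recoupling; everything else is bookkeeping, and I expect the real difficulty to lie precisely there --- keeping the many legs, the hidden summations, and the placement of each naturality step under control --- rather than in any conceptual obstacle.
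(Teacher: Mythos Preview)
Your proof is correct and follows essentially the same route as the paper's. The paper works entirely in the Penrose--Einstein shorthand: it writes out $(fg)h$, then ``takes composition at $c$'' and invokes naturality of $\phi$ to reach the normal form with $\phi_{a,b,d\otimes w}\phi_{a\smCirct b,d,w}$ and a single leg $(f_a\otimes g_b\otimes h_d)^e\otimes(f_a\otimes g_b\otimes h_d)_e$, does the same for $f(gh)$, and concludes by the pentagon. What you have spelled out with explicit orthonormal bases $\beta^A$, $\beta^B$ and the unitary recoupling is precisely the mechanism hidden behind the paper's phrase ``taking composition at $c$'' together with the interchange law recorded just before Definition~\ref{DefPenEinNotComp}; your basis-independence argument is the unpacked version of that identity.
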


\begin{proof}
Let $(f,g,h) \in  \aA^y_x\times \aA^z_y\times \aA^w_z$.  First, the product $(f g)h$ can be expressed as
\[
\lbrack \lbrack f^a (\id_a \otimes g^b)\phi_{a,b,z}((f_a\otimes g_b)^c\otimes \id_z) \rbrack (\id_c \otimes h^d)\phi_{c,d,w}(((f_a\otimes g_b)_c\otimes h_d)^e\otimes \id_w) \rbrack \otimes ((f_a\otimes g_b)_c \otimes h_d)_e.
\]
Taking composition at $c$ and using naturality of $\phi$, the above is equal to
\[
\lbrack f^a(\id_a \otimes g^b)(\id_a\otimes \id_b\otimes h^d)\phi_{a,b,d\otimes w}\phi_{a\smCirct b,d,w} ((f_a\otimes g_b\otimes h_d)^e\otimes \id_w) \rbrack \otimes (f_a \otimes g_b \otimes h_d)_e.
\]
Similarly, the expression $f(g h)$ reduces to \[
\lbrack f^a(\id_a \otimes g^b)(\id_a\otimes \id_b\otimes h^d)(\id_a\otimes \phi_{b,d,w})\phi_{a,b\smCirct d,w} ((f_a\otimes g_b\otimes h_d)^e\otimes \id_w) \rbrack \otimes (f_a \otimes g_b \otimes h_d)_e.
\]
The conclusion then follows from the associativity constraint on $\phi$.
\end{proof}

\begin{Prop}\label{PropUnit}
Let $x$ and $y$ be objects in $\mathcal{D}$, and let $e_y\in \Mor(u_o\otimes y,y)$ be the structure map of tensor unit included in the module package.  Then the element $1_y = e_y \otimes 1 \in \aA_y^y$ is a right unit for the multiplication map $\aA_x^y\times \aA_y^y\rightarrow \aA_x^y$, and a left unit for the multiplication map $\aA_y^y\times \aA_y^x \rightarrow \aA_y^x$.
\end{Prop}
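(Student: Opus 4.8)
The plan is to substitute $1_y = e_y\otimes 1$ into the defining formula for the multiplication in Definition~\ref{DefPenEinNotComp}, watch the summations collapse to a single term, and then read off the result from one of the two coherence triangles in~\eqref{EqCoh2}.

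First I would verify that $1_y$ is a right unit. Fix $f\in \aA_x^y$ and expand $f\cdot 1_y$ using Definition~\ref{DefPenEinNotComp} with $g = 1_y$. The leg $(1_y)^b\in \Mor(u_b\otimes y,y)$ equals $e_y$ for $b = o$ and vanishes otherwise, so the $b$-summation reduces to the single term $b = o$, with $(1_y)_o = 1\in \Hsp_o = \C$. Under the identifications $u_a\Circt u_o = u_a$ and $\Hsp_a\otimes \Hsp_o = \Hsp_a$ fixed at the start of the section, the vector $f_a\otimes 1$ becomes $f_a\in \Hsp_a$; since $u_a$ is irreducible, its decomposition via~\eqref{EqHuDec} is concentrated in the summand $c = a$, where the morphism component is $\id_{u_a}$ and the vector component is $f_a$, so the $c$-summation collapses as well. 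What remains is
\[
f\cdot 1_y = \bigl(f^a\,(\id_a\otimes e_y)\,\phi_{a,o,y}\bigr)\otimes f_a ,
\]
and the upper triangle of the coherence diagram~\eqref{EqCoh2}, with $U = u_a$ and $X = y$, says precisely that $(\id_a\otimes e_y)\,\phi_{a,o,y} = \id_{u_a\otimes y}$; hence $f\cdot 1_y = f^a\otimes f_a = f$.

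For the left-unit property I would run the same computation on $1_y\cdot g$ for $g\in \aA_y^x$: now it is $(1_y)^a$ that forces $a = o$, the identification $\Hsp_o\otimes \Hsp_b = \Hsp_b$ turns $1\otimes g_b$ into $g_b$, and irreducibility of $u_b$ collapses the remaining sum to $c = b$, leaving
\[
1_y\cdot g = \bigl(e_y\,(\id_o\otimes g^b)\,\phi_{o,b,x}\bigr)\otimes g_b .
\]
One extra ingredient beyond the right-unit case is needed here: naturality of the natural transformation $e\colon M(\mathbbm{1},-)\to \id$ applied to the morphism $g^b\colon u_b\otimes x\to y$ gives $e_y\,(\id_o\otimes g^b) = g^b\,e_{u_b\otimes x}$, after which the lower triangle of~\eqref{EqCoh2}, with $U = u_b$ and $X = x$, supplies $e_{u_b\otimes x}\,\phi_{o,b,x} = \id_{u_b\otimes x}$. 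Therefore $1_y\cdot g = g^b\otimes g_b = g$.

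No step poses a genuine difficulty. The only point requiring care is the bookkeeping of the Penrose--Einstein indices — making sure that all three summations (over $a$, over the index decomposing $1_y$, and over the index $c$ produced by the tensor decomposition inside the multiplication formula) collapse, and in the correct order — together with selecting the appropriate one of the two coherence triangles in each case: the $\phi_{U,\mathbbm{1},X}$ triangle for the right unit, and the $\phi_{\mathbbm{1},U,X}$ triangle (preceded by naturality of $e$) for the left unit.
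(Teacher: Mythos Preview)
Your proof is correct and follows the same approach as the paper: substitute $1_y$ into the multiplication formula, collapse the index sums using the irreducibility of the $u_a$, and invoke the unit coherence triangle~\eqref{EqCoh2}. You are in fact more explicit than the paper on the left-unit case, correctly identifying the extra use of naturality of $e$ before applying the lower triangle, whereas the paper merely calls this case ``analogous.''
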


\begin{proof}
Take $f \in \aA_y^x$. Then the formula for the product $f\cdot 1_y$ reads
\begin{equation*}
\lbrack f^a(\id_a\otimes e_y)\phi_{a,o,y}((f_a\otimes 1)^c\otimes \id_y)\rbrack \otimes (f_a \otimes 1)_c.
\end{equation*}
Since $\Mor(u_c, u_o \Circt u_a) \neq 0$ if and only if $a = c$ for $a, c \in I$, the unit constraint on $e$ reduces this expression to $ f^a(\id_a\otimes \id_y)\otimes f_a = f$. This shows that $1_y$ is a left unit.  An  analogous argument shows that $1_y$ is also a left unit.
\end{proof}

It follows that we can make a category $\mathscr{A}$ having the same objects as $\cat{D}$, and with morphism space from $x$ to $y$ the linear space $\aA_y^x$. In particular the `endomorphism spaces' $\aA_y^y$ are unital algebras. It contains $\cat{D}$ as a faithful sub-$^*$category, as shown by the following lemma.

\begin{Lem}\label{LemMorphi} There is a linear functor $\mathcal{D}\rightarrow \cat{A}$ which is the identity on objects, and which sends $f\in \Mor(y,x)$ to $f e_y\otimes 1 \in \mathscr{A}_x^y$.
\end{Lem}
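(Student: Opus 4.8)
The plan is to check the two functor axioms for the assignment $\Phi\colon\cat{D}\to\cat{A}$, $\Phi(f)=fe_y\otimes 1$ (for $f\in\Mor(y,x)$), linearity being clear since $\Phi$ is obtained by composing with the fixed morphism $e_y$ and pairing with the fixed vector $1$. First I would observe that $fe_y\otimes 1$ really lands in $\aA_x^y=\bigoplus_{a\in I}\Mor(u_a\otimes y,x)\otimes\Hsp_a$: since $e_y\in\Mor(u_o\otimes y,y)$ and $\Hsp_o$ is identified with $\C$, the element $fe_y\otimes 1$ is concentrated in degree $o$, with $(fe_y\otimes 1)^o=fe_y$ and $(fe_y\otimes 1)_o=1$. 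Preservation of identities is then precisely Proposition~\ref{PropUnit}: $\id_y$ is sent to $e_y\otimes 1=1_y$, the unit of the algebra $\aA_y^y$.

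The substantive point is compatibility with composition: for $f\in\Mor(y,x)$ and $g\in\Mor(z,y)$ I must show $(fe_y\otimes 1)(ge_z\otimes 1)=(fg)e_z\otimes 1$ in $\aA_x^z$, the product being the one of Definition~\ref{DefPenEinNotComp}. Put $F=fe_y\otimes 1$ and $G=ge_z\otimes 1$. Since $F^a=0$ and $G^b=0$ unless $a=b=o$, only the $a=b=o$ summand of the multiplication formula contributes, giving $(FG)^c\otimes(FG)_c=\bigl[F^o(\id_{u_o}\otimes G^o)\phi_{o,o,z}\bigl((F_o\otimes G_o)^c\otimes\id_z\bigr)\bigr]\otimes(F_o\otimes G_o)_c$. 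Now $F_o\otimes G_o=1\otimes 1$ lies in $\Hsp_o\otimes\Hsp_o=\Hsp_{u_o\smCirct u_o}$, which by strictness of the unit is $\Hsp_o\cong\C$; hence its decomposition via~\eqref{EqHuDec} is concentrated in degree $c=o$, with $(F_o\otimes G_o)^o=\id_{u_o}$ and $(F_o\otimes G_o)_o=1$. Consequently $(FG)^c=0$ for $c\neq o$, and after substituting $F^o=fe_y$, $G^o=ge_z$ and using functoriality of $M(u_o,-)$ one is left with $(FG)^o=f\circ e_y\circ(\id_{u_o}\otimes g)\circ(\id_{u_o}\otimes e_z)\circ\phi_{o,o,z}$ and $(FG)_o=1$.

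To finish I would invoke the coherence triangle~\eqref{EqCoh2} with $U=\mathbbm{1}=u_o$ and $X=z$, which gives $(\id_{u_o}\otimes e_z)\circ\phi_{o,o,z}=\id_{u_o\otimes z}$, and then the naturality of the module unit constraint $e$ applied to $g\colon z\to y$, which gives $e_y\circ(\id_{u_o}\otimes g)=g\circ e_z$. Together these collapse $(FG)^o$ to $f\circ g\circ e_z=(fg)e_z$, so that $FG=(fg)e_z\otimes 1=\Phi(fg)$, and $\Phi$ is a functor. Its faithfulness — which realizes $\cat{D}$ as a subcategory of $\cat{A}$ — is automatic: $e_y$ is a component of a unitary, hence invertible, natural transformation, so $f\mapsto fe_y$ is injective. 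I do not expect a genuine obstacle here; the whole argument is a bookkeeping exercise in the Penrose--Einstein notation, combined with the identifications $\Hsp_o\cong\C$ and $u_o\Circt u_o\cong u_o$ and the module-category coherence relations. The one place to be careful is keeping straight which of $\phi$, $e$, and the various identity morphisms live on which objects.
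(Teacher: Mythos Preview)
Your argument is correct and is precisely the computation the paper has in mind: the paper's own proof is the single line ``This is proven in the same way as Proposition~\ref{PropUnit}'', and what you have written is exactly that---reduce the multiplication formula to the $a=b=c=o$ summand and then invoke the unit coherence~\eqref{EqCoh2} together with naturality of $e$. Your added remark on faithfulness (via invertibility of $e_y$) is also what the paper intends when it says $\cat{D}$ sits faithfully inside $\cat{A}$.
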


\begin{proof} This is proven in the same way as Proposition \ref{PropUnit}.
\end{proof}

In the following, we will identify $\Mor(y,x)$ with its image inside $\aA_x^y$.

\begin{Prop}\label{PropCoactAlgHomAxy}
Take $x,y,z$ objects in $\mathcal{D}$. Let $f\in \aA_x^y$ and $g\in \aA_y^z$. Then
\[
\alpha_x^z(f g) = \alpha_x^y(f)\alpha_y^z(g).
\]
\end{Prop}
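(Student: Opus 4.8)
The plan is to verify the intertwining relation $\alpha_x^z(fg) = \alpha_x^y(f)\,\alpha_y^z(g)$ by unwinding both sides using the Penrose--Einstein notation and the explicit definition of the multiplication map in Definition~\ref{DefPenEinNotComp}. Recall that the comodule structure on $\aA_x^y$ is $\alpha_x^y = \bigoplus_a(\id\otimes \delta_a)$, where $\delta_a$ is the corepresentation coaction on $\Hsp_a$ from Example~\ref{ExaHilb}; so on a homogeneous element $f = f^a\otimes f_a$ with $f^a\in\Mor(u_a\otimes y,x)$ and $f_a\in\Hsp_a$ we have $\alpha_x^y(f) = f^a\otimes \delta_a(f_a)$, the middle leg now living in $\PW(\G)$. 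The key point is that $\delta_a$ is multiplicative with respect to the tensor product of representations: under the decomposition of $u_a\Circt u_b$ into irreducibles via a maximal family of isometries $\iota^c_{ab,k}\colon u_c\to u_a\Circt u_b$, the coaction $\delta_c$ on $(\iota^c_{ab,k})^*(\xi\otimes\eta)$ is computed from $\delta_a(\xi)$ and $\delta_b(\eta)$ by the very relation $(\id\otimes\Delta)(u) = u_{12}u_{13}$ for the relevant representations, i.e.\ the identity already displayed in the discussion after Notation~\ref{NotPen1} governing $(f_c\otimes g_d)^e\otimes(f_c\otimes g_d)_e$.

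Concretely, I would first write out $\alpha_x^z(fg)$ for $f = f^a\otimes f_a\in\aA_x^y$, $g = g^b\otimes g_b\in\aA_y^z$: by definition $fg = [f^a(\id_a\otimes g^b)\phi_{a,b,z}((f_a\otimes g_b)^c\otimes\id_z)]\otimes(f_a\otimes g_b)_c$, and applying $\alpha_x^z$ simply applies $\delta_c$ to the $\Hsp_c$-leg, giving
\[
\alpha_x^z(fg) = [f^a(\id_a\otimes g^b)\phi_{a,b,z}((f_a\otimes g_b)^c\otimes\id_z)]\otimes \delta_c\bigl((f_a\otimes g_b)_c\bigr).
\]
On the other side, $\alpha_x^y(f)\alpha_y^z(g)$ is formed by the same multiplication map but now with the middle legs carrying the $\PW(\G)$-factors coming from $\delta_a(f_a)$ and $\delta_b(g_b)$; writing these out, the combinatorial content is exactly that the $\PW(\G)$-part is obtained by multiplying the two comodule outputs and then passing through the decomposition of $u_a\Circt u_b$. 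I would then invoke the identity relating $\delta_c\circ(\iota^c_{ab,k})^*$ to $(\iota^c_{ab,k})^*\circ(\delta_a\otimes\delta_b)$ — which is precisely the multiplicativity $(\id\otimes\Delta)(u_a\Circt u_b) = (u_a\Circt u_b)_{12}(u_a\Circt u_b)_{13}$ together with the fact that the $\iota$'s are intertwiners — to match the two expressions. Naturality of $\phi$ with respect to the isometries $(f_a\otimes g_b)^c$ is used to move the associators past, just as in the proof of associativity of the multiplication.

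The main obstacle will be bookkeeping: keeping straight which leg of each multi-fold tensor product is being acted on, and correctly formulating the multiplicativity of $\bigoplus_a\delta_a$ under the fusion decomposition $u_a\Circt u_b \cong \bigoplus_{c,k} u_c$. Once that lemma-level identity is isolated and stated cleanly (essentially: $\alpha$ applied to $f_a\otimes g_b \mapsto (f_a\otimes g_b)^c\otimes(f_a\otimes g_b)_c$ intertwines the tensor-product comodule structure with $\bigoplus_c\delta_c$), the rest is a direct substitution with no genuine difficulty — the module-category coherence plays no role here beyond what is already needed to define the product, since the associators $\phi_{a,b,z}$ are $\PW(\G)$-colinear by virtue of being morphisms in $\cat D$ (on which $\Rep(\G)$ acts), hence commute with the coaction. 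I would therefore present the argument in two moves: (i) reduce to the fusion-multiplicativity of $\bigoplus_a\delta_a$, citing the displayed identity after Notation~\ref{NotPen1} and the corepresentation axiom; (ii) substitute into both sides of the claimed equation and observe they coincide term by term.
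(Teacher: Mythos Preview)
Your proposal is correct and follows essentially the same approach as the paper: both arguments apply the coaction to the Hilbert-space leg of $fg$ and reduce to the compatibility of the fusion decomposition $u_a\Circt u_b\cong\bigoplus_{c,k}u_c$ with the tensor-product coaction, which the paper encodes via Sweedler notation (the identity $\xi^c\otimes\delta_c(\xi_c)=(\xi_{(0)})^c\otimes(\xi_{(0)})_c\otimes\xi_{(1)}$) and you encode via the intertwiner property of the $\iota^c_{ab,k}$. One small phrasing issue: your remark that the associators $\phi_{a,b,z}$ are ``$\PW(\G)$-colinear'' is not quite right as stated, since these are morphisms in $\cat{D}$ and carry no coaction; the correct (and intended) point is simply that $\alpha$ acts only on the Hilbert-space leg of $\aA_x^z$, so the morphism leg, in which the $\phi$'s sit, is untouched.
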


\begin{proof}
When $(E, \alpha)$ a right comodule over $\PW(\G)$, let us write, for $x\in E$, \[\alpha(x) = x_{(0)} \otimes x_{(1)} \in E \otimes \PW(\G).\] Then, resorting again to the notation of Example \ref{ExaHilb}, one has \[\delta_{u \smCirct v}(\xi \otimes \eta) = \xi_{(0)} \otimes \eta_{(0)} \otimes \xi_{(1)} \eta_{(1)}.\] Using that $\xi^c \otimes \delta_c(\xi_c) = (\xi_{(0)})^c \otimes (\xi_{(0)})_c\otimes \xi_{(1)}$, the element $\alpha_x^z(f g)$ can thus be computed as
\begin{multline*}
\lbrack f^a(\id_a \otimes g^b)\phi_{a,b,z}(f_a\otimes g_b)^c \rbrack \otimes \delta_c((f_a \otimes g_b)_c)\\ = \lbrack f^a(\id_a \otimes g^b)\phi_{a,b,z}(f_{a(0)}\otimes g_{b(0)})^c \rbrack \otimes (f_{a(0)} \otimes g_{b(0)})_c \otimes f_{a(1)} g_{b(1)}.
\end{multline*}
On the other hand, the way the coaction $\alpha_x^y$ is defined implies that
\[
f^a \otimes f_{a(0)} \otimes f_{a(1)} = (f_{(0)})^a \otimes (f_{(0)})_a \otimes f_{(1)}.
\]
It follows that $\alpha_x^z(f g)$ can be expressed as
\[
\lbrack (f_{(0)})^a(\id_a \otimes (g_{(0)})^b)\phi_{a,b,z}((f_{(0)})_a\otimes (g_{(0)})_b)^c \rbrack \otimes ((f_{(0)})_a \otimes (g_{(0)})_b)_c \otimes f_{(1)} g_{(1)},
\]
which is precisely $\alpha_x^y(f)\alpha_y^z(g)$.
\end{proof}

We will now define a $^*$-operation $\aA_x^y \rightarrow \aA_y^x$. Here the rigidity of $\Rep(\G)$ will come into play, so we first fix our conventions concerning duals.

\begin{Not}
When $f^u \in \Mor(u\otimes y,x)$, we write $\lu{u}f \in \Mor(y, \bar{u} \otimes x)$ for its image of the Frobenius isomorphism associated with $(R_u, \bar{R}_u)$ (see Lemma~\ref{LemModCatFrob}). So, \[\lu{u}f = (\id_{\bar{u}}\otimes f^u)\phi_{\bar{u},u,y}(R_u\otimes \id_y)e_y^*.\] Similarly, when $ \xi_u \in \Hsp_u$, we define $\ld{u}\xi \in \overline{\Hsp_{\bar{u}}}$ by the formula \[ \overline{\ld{u}\xi} =  (\xi_u^* \otimes \id_{u}) \bar{R}_{u}(1),\] where $\xi^*$ for a vector $\xi \in \Hsp$ is the obvious map $\Hsp\rightarrow \mathbb{C}$.
\end{Not}

\begin{Def}\label{DefStar}
We define the anti-linear \emph{conjugation map} $^*\colon \aA_x^y\rightarrow \aA^x_y$ by
\[
f^* = (f^*)^{\bar{a}} \otimes (f^*)_{\bar{a}} =  (\lu{a}f)^* \otimes \overline{\ld{a}f}.
\]
\end{Def}

Since the above formula involves both $R_a$ and $\bar{R}_a^*$ for each $a \in I$, the definition of $\,^*\,$ is actually independent of the choice of the duality morphisms.

\begin{Prop}
The operation $^*$ is anti-multiplicative.
\end{Prop}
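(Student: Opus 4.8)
The plan is to verify directly from the definitions that $(fg)^* = g^* f^*$ for $f \in \aA_x^y$ and $g \in \aA_y^z$, so that $^*$ sends the multiplication map $\aA_x^y \times \aA_y^z \to \aA_x^z$ to the multiplication $\aA_z^y \times \aA_y^x \to \aA_z^x$ with the order reversed. The main tool is the interaction between the Frobenius isomorphism of Lemma~\ref{LemModCatFrob} (applied fiberwise to the $\Mor$-legs) and the conjugation of vectors $\xi_u \mapsto \overline{\ld{u}\xi}$ on the Hilbert-space legs, together with the fact established after Definition~\ref{DefStar} that the answer does not depend on the choice of duality morphisms.

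First I would unwind $fg$ using Definition~\ref{DefPenEinNotComp} and then apply $^*$: the $\Mor$-leg of $(fg)^*$ is the image under the Frobenius isomorphism associated with $(R_c,\bar R_c)$ of the morphism $f^a(\id_a\otimes g^b)\phi_{a,b,z}((f_a\otimes g_b)^c\otimes\id_z) \in \Mor(u_c\otimes z,x)$, and the Hilbert-space leg is $\overline{\ld{c}(f_a\otimes g_b)}$. The key computational step is a ``distributivity'' of the Frobenius isomorphism over the composition that defines $fg$: applying the conjugate equations for $u_a$ and $u_b$, one rewrites $\lu{c}\bigl(f^a(\id_a\otimes g^b)\phi_{a,b,z}((f_a\otimes g_b)^c\otimes\id_z)\bigr)$ in terms of $\lu{a}(f^a)$ and $\lu{b}(g^b)$ composed with the appropriate associators and with the duality morphism realizing $\overline{u_a\otimes u_b}$ as $\bar u_b\otimes\bar u_a$ (cf. the first Remark after Definition~\ref{DefTensor}); simultaneously the vector leg $\overline{\ld{c}(f_a\otimes g_b)}$ gets re-expressed via the analogous decomposition $\ld{c}(\xi\otimes\eta) = (\ld{b}\eta)\otimes(\ld{a}\xi)$-type identity. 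Matching these two rewritings against the definition of the product $g^* f^*$ in $\aA$ — where $g^*\in\aA_z^y$ and $f^*\in\aA_y^x$, so the relevant multiplication is $\aA_z^y\times\aA_y^x\to\aA_z^x$ — and using the Penrose--Einstein summation conventions with the isometries $\iota^c_{ab,k}$, one reads off that the two coincide.

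The main obstacle will be bookkeeping the dualities consistently: the conjugation $^*$ on $\aA$ is forced to be independent of the choice of $(R_a,\bar R_a)$, but in the middle of the computation one passes through $\overline{u_a\otimes u_b}$, and one must check that the canonical solution of the conjugate equations for $u_a\Circt u_b$ built from those of $u_a$ and $u_b$ is compatible, on the one hand, with the summation over $c$ and the isometries $\iota^c_{ab,k}$ (so that $R_c,\bar R_c$ assemble correctly), and on the other hand with the associator $\phi$ through the coherence diagram \eqref{EqCoh2} and its three-fold analogue. Concretely, the step to watch is the ``snake'' manipulation that pulls the morphism $g^b$ and the vector $g_b$ past the $R_a$ appearing in $\lu{a}(-)$: here one needs naturality of $\phi$ together with the conjugate equations \eqref{EqDualityMors}, and this is where an error in leg-numbering would hide. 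I expect that once the duality conventions are pinned down, the identity falls out formally from associativity of $\phi$ and the conjugate equations, exactly as in the proofs of the preceding propositions in this section; no analytic input is needed since everything takes place in the algebraic comodule $\aA_x^y$.
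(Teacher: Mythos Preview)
Your outline is correct and follows essentially the same route as the paper: both arguments hinge on (i) choosing the canonical solution of the conjugate equations for $u_a\Circt u_b$ built from $(R_a,\bar R_a)$ and $(R_b,\bar R_b)$, (ii) using naturality and the coherence axioms for $\phi$ and $e$ to push the Frobenius isomorphism through the composite defining the product, and (iii) a matching identity on the Hilbert-space legs relating $\overline{\ld{a}\xi}\otimes\overline{\ld{b}\eta}$ to $\overline{\ld{c}(\eta\otimes\xi)}$ via $R_c$. The only cosmetic difference is that the paper expands $g^*f^*$ and reduces it to $(fg)^*$, whereas you propose to start from $(fg)^*$; the intermediate manipulations are the same.
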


\begin{proof}
Let $f\in \aA_x^y$ and $g\in \aA_y^z$. Then by definition of the product,
\begin{align*}
(g^*f^*)^{\bar{c}}\otimes (g^*f^*)_{\bar{c}} &= \lbrack (g^*)^{\bar{a}}(\id_{\bar{a}}\otimes (f^*)^{\bar{b}})\phi_{\bar{a},\bar{b},x}(((g^*)_{\bar{a}}\otimes (f^*)_{\bar{b}})^{\bar{c}}\otimes \id_x)\rbrack \otimes ((g^*)_{\bar{a}}\otimes (f^*)_{\bar{b}})_{\bar{c}}\\
&= \lbrack (\lu{a}g)^{*}(\id_{\bar{a}}\otimes (\lu{b}f)^*)\phi_{\bar{a},\bar{b},x}((\overline{\ld{a}g}\otimes \overline{\ld{b}f})^{\bar{c}}\otimes \id_x) \rbrack \otimes (\overline{\ld{a}g}\otimes \overline{\ld{b}f})_{\bar{c}}.
\end{align*}
Let us concentrate first on the part $\phi_{\bar{a},\bar{b},x}^*(\id_{\bar{a}}\otimes \lu{b}f)\lu{a}g$. Choose as solution for the conjugate equations for $b\Circt a$ the couple $((\id_{\bar{a}}\otimes R_b\otimes \id_a)R_a, (\id_{a} \otimes \bar{R}_b\otimes \id_{\bar{a}})\bar{R}_a)$. Then, using naturality and coherence for $\phi$ and $e$, we can write, after some diagram manipulations,
\begin{equation*}
\phi_{\bar{a},\bar{b},x}^*(\id_{\bar{a}}\otimes \lu{b}f)\lu{a}g = (\id_{\bar{a}\smCirct \bar{b}}\otimes (f^b(\id_b\otimes g^a)))\phi_{\bar{a}\smCirct\bar{b},b,a\otimes z}\phi_{\bar{a}\smCirct \bar{b}\smCirct b,a,z}(R_{b\smCirct a}\otimes \id_z)e_z^*.
\end{equation*}
Substituting in the expression for $g^*f^*$ and pulling through the factor $((g^*)_a\otimes (f^*)_b)^c\otimes \id_x$, we find that $g^*f^*$ is equal to the expression
\[
\lbrack e_z(R_{b\smCirct a}^*\otimes \id_z)((\overline{\ld{a}g}\otimes \overline{\ld{b}f})^{\bar{c}}\otimes\id_{b\smCirct a}\otimes \id_z)\phi_{\bar{c}\otimes b,a,z}^*\phi_{\bar{c},b,a\otimes z}^*(\id_{\bar{c}}\otimes ((\id_b\otimes g^{a*})f^{b*}))\rbrack \otimes (\overline{\ld{a}g}\otimes \overline{\ld{b}f})_{\bar{c}}.
\]
Now for vectors $\xi$ and $\eta$ in representation spaces, we have
\[
\lbrack R_{b\smCirct a}^*((\overline{\ld{a}\xi}\otimes \overline{\ld{b}\eta})^{\bar{c}}\otimes \id_b\otimes \id_a)\rbrack \otimes (\overline{\ld{a}\xi}\otimes \overline{\ld{b}\eta})_{\bar{c}} = \lbrack R_c^*(\id_{\bar{c}}\otimes ((\eta_b\otimes \xi_a)^c)^*)\rbrack \otimes \overline{\ld{c}(\eta_b\otimes\xi_a)},\] which can be verified using the natural isomorphism \[\oplus_c \Mor(\bar{c}\Circt b\Circt a,\mathbbm{1})\otimes \Hsp_{\bar{c}}\rightarrow \overline{\Hsp}_b\otimes \overline{\Hsp}_a
\]
and the conjugate equations for $(R,\bar{R})$. It follows that $g^*f^*$ can be written as
\[
\lbrack e_z(R_c^*\otimes \id_z)(\id_{\bar{c}}\otimes ((f_b\otimes g_a)^{c})^*\otimes \id_z)\phi_{\bar{c}\otimes b,a,z}^*\phi_{\bar{c},b,a\otimes z}^*(\id_{\bar{c}}\otimes ((\id_b\otimes g^{a*})f^{b*}))\rbrack \otimes \overline{\ld{c}(f_b\otimes g_a)}.
\]
Using once more coherence and naturality for $\phi$, this reduces to $(f g)^*$.
\end{proof}

\begin{Prop}
The operation $^*$ is involutive.
\end{Prop}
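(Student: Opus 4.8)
The plan is to show that $(f^*)^* = f$ for $f \in \aA_x^y$ by unwinding the definition in Definition~\ref{DefStar} twice and tracking how the two legs transform. Write $f = f^a \otimes f_a$ with $f^a \in \Mor(u_a \otimes y, x)$ and $f_a \in \Hsp_a$. Applying $^*$ once gives $f^* = (\lu{a}f)^* \otimes \overline{\ld{a}f}$, an element of $\aA_y^x$ whose $\bar a$-leg in $\Mor(u_{\bar a} \otimes x, y)$ is $(\lu{a}f)^* = e_y (R_a^* \otimes \id_y)\phi_{\bar a, a, y}^*(\id_{\bar a} \otimes f^a)$ and whose $\bar a$-leg in $\Hsp_{\bar a}$ is $\overline{\ld{a}f}$, characterized by $\overline{\ld{a}f} = (f_a^* \otimes \id_a)\bar R_a(1) \in \overline{\Hsp_{\bar a}}$. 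Applying $^*$ a second time, the $a$-summand of $(f^*)^*$ uses the dual of $\bar a$; here I will invoke the second bullet of the first Remark after Definition~\ref{DefTensor}, namely that $(u_a, \bar R_a, R_a)$ is a dual for $\bar u_a$, so that the Frobenius isomorphism and the vector-conjugation for $\bar a$ are computed with the roles of $R$ and $\bar R$ swapped. This is the key structural input and the place where one must be careful, since the definition of $^*$ was arranged (as noted right after Definition~\ref{DefStar}) to be independent of the choice of duality data precisely because it uses both $R_a$ and $\bar R_a^*$.

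Concretely, the first step is to compute $\lu{\bar a}(f^*)$, i.e.~push the Hilbert-space leg of $f^*$ back through the Frobenius isomorphism for $\bar u_a$ using the dual $(u_a, \bar R_a, R_a)$. Using the formula $\lu{u}g = (\id_{\bar u}\otimes g^u)\phi_{\bar u, u, y}(R_u \otimes \id_y)e_y^*$ with $u$ replaced by $\bar a$ and $R_{\bar a}$ replaced by $\bar R_a$, together with the morphism $(\lu{a}f)^*$ found above, this should collapse — via the conjugate equation $(\bar R_a^* \otimes \id)(\id \otimes R_a) = \id$ applied in the module category through the coherence diagram~\eqref{EqCoh2} — to $f^a$ itself, up to the natural identification $\bar{\bar u}_a \cong u_a$. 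The second step is the analogous one for vectors: from $\overline{\ld{\bar a}(\overline{\ld{a}f})} = ((\overline{\ld{a}f})^* \otimes \id_{\bar a})R_a(1)$, using that $\overline{\Hsp_{\bar{\bar a}}} \cong \Hsp_a$ and that the two conjugate equations for $(R_a,\bar R_a)$ let one cancel $\bar R_a$ against $R_a^*$, one recovers the vector $f_a$. Assembling, $(f^*)^* = f^a \otimes f_a = f$.

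The routine but slightly delicate bookkeeping is the presence of the associators $\phi$ in the module category: the definition of $\lu{u}f$ for objects of $\cat D$ carries a $\phi_{\bar u, u, y}$, so when iterating one gets a $\phi_{\bar{\bar a}, \bar a, x}$-type factor that must be matched, using naturality of $\phi$ and the coherence pentagon together with~\eqref{EqCoh2}, against the $\phi^*$ coming from $(\lu{a}f)^*$; these combine to the identity exactly when one also applies the conjugate equations. I expect this associator/coherence reconciliation, rather than any conceptual point, to be the main obstacle: it is the same kind of diagram chase already carried out in the proof that $^*$ is anti-multiplicative, so I would phrase it as "by a diagram manipulation entirely analogous to the previous proof, using naturality and coherence for $\phi$ and $e$ together with the conjugate equations for $(R_a, \bar R_a)$, the iterated Frobenius maps cancel", and then conclude $(f^*)^* = f$. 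One should also remark, as a sanity check consistent with the comment after Definition~\ref{DefStar}, that although intermediate expressions involve a choice of $\bar{\bar a} \cong a$, the final answer does not depend on it.
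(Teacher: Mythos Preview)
Your proposal is correct and follows essentially the same route as the paper's proof: unwind Definition~\ref{DefStar} twice, use that $(u_a,\bar{R}_a,R_a)$ serves as duality data for $\bar{u}_a$ (so that $(R_{\bar a},\bar{R}_{\bar a})$ may be replaced by $(\bar{R}_a,R_a)$), simplify the morphism leg via naturality and coherence for $\phi$ and $e$, and then apply the conjugate equations for $(R_a,\bar{R}_a)$ on both legs to recover $f^a\otimes f_a$. The paper's write-up is slightly more explicit in first displaying the full iterated expression before simplifying, but the logical content is the same.
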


\begin{proof}
Let $f \in \aA_x^y$.  By the definition of the $^*$-operation, $(f^*)^*$ can be written as
\begin{multline*}
\lbrack e_x(R_{\bar{a}}^*\otimes \id_x)\phi_{a,\bar{a},x}^*(\id_{a}\otimes \id_{\bar{a}}\otimes f^{a})(\id_a\otimes \phi_{\bar{a},a,y})(\id_a\otimes R_a\otimes \id_y)(\id_a\otimes e_y^*)\rbrack \\ \otimes (\bar{R}_a^*\otimes \id_a)(f_a\otimes \bar{R}_{\bar{a}}(1)).
\end{multline*}

Using again naturality and coherence for $\phi$ and $e$, this can be rewritten
\begin{equation*}
(f^*)^* = \lbrack f^a (R_{\bar{a}}^*\otimes\id_a\otimes \id_y)(\id_a\otimes R_a\otimes \id_y)\rbrack  \otimes (\bar{R}_a^*\otimes \id_a)(f_a\otimes \bar{R}_{\bar{a}}(1)).
\end{equation*}
But since we may replace the conjugate solution $(R_{\bar{a}},\bar{R}_{\bar{a}})$ with $(\bar{R}_a,R_a)$, the conjugate equations for $(R_a,\bar{R}_a)$ show that the above expression reduces to $f$.
\end{proof}

\begin{Prop}\label{LemStarCoact}
For $f\in \aA_x^y$, we have $\alpha_x^y(f)^* = \alpha_y^x(f^*)$.
\end{Prop}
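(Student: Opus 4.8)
The plan is to unfold both sides of the asserted identity using the componentwise description of the coaction together with the explicit formula for the $^*$-operation, and thereby to reduce the whole statement to a single identity living inside $\Rep(\G)$ about conjugate representations.

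First I would recall that $\alpha_x^y$ acts on the summand $\Mor(u_a\otimes y,x)\otimes\Hsp_a$ of $\aA_x^y$ as $\id\otimes\delta_a$, and that $\aA_x^y\otimes\PW(\G)$ carries the tensor product $^*$-structure $(m\otimes g)^*=m^*\otimes g^*$. Since both $\alpha_y^x\circ{}^*$ and ${}^*\circ\alpha_x^y$ are additive and conjugate-linear, it is enough to check the identity on an elementary tensor $f=f^a\otimes\xi$ concentrated in a single summand, with $f^a\in\Mor(u_a\otimes y,x)$ and $\xi\in\Hsp_a$. Writing $\delta_a(\xi)=\xi_{(0)}\otimes\xi_{(1)}$ and abbreviating by $\kappa_a\colon\Hsp_a\to\Hsp_{\bar a}$ the conjugate-linear map $\xi\mapsto\overline{\ld{a}{\xi}}$, Definition~\ref{DefStar} gives $f^*=(\lu{a}{f^a})^*\otimes\kappa_a(\xi)$ in the summand indexed by $\bar a$, so that
\[
\alpha_x^y(f)^*=(\lu{a}{f^a})^*\otimes\kappa_a(\xi_{(0)})\otimes\xi_{(1)}^*,\qquad\alpha_y^x(f^*)=(\lu{a}{f^a})^*\otimes\delta_{\bar a}\bigl(\kappa_a(\xi)\bigr).
\]
Since the morphism leg $(\lu{a}{f^a})^*$ is common to the two, the proposition reduces to the identity
\[
\delta_{\bar a}\bigl(\kappa_a(\xi)\bigr)=\bigl(\kappa_a\otimes{}^*\bigr)\bigl(\delta_a(\xi)\bigr),\qquad\xi\in\Hsp_a,
\]
where $\kappa_a\otimes{}^*$ is the well-defined conjugate-linear map $\Hsp_a\otimes\PW(\G)\to\Hsp_{\bar a}\otimes\PW(\G)$ sending $\zeta\otimes g$ to $\kappa_a(\zeta)\otimes g^*$.

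Next I would prove this last identity, which involves only $u_a$, $u_{\bar a}$ and the duality morphism $\bar R_a\colon\mathbbm 1\to u_a\Circt u_{\bar a}$. Fix orthonormal bases $(e_i)_i$ of $\Hsp_a$ and $(e'_j)_j$ of $\Hsp_{\bar a}$, write $\delta_a(e_j)=\sum_i e_i\otimes v_{ij}$ and $\delta_{\bar a}(e'_j)=\sum_i e'_i\otimes w_{ij}$, so that $V=(v_{ij})$ and $W=(w_{ij})$ are the matrices over $\PW(\G)$ representing $u_a$ and $u_{\bar a}$, with $V$ unitary. Put $\bar R_a(1)=\sum_{i,j}r_{ij}\,e_i\otimes e'_j$ and $\rho=(r_{ij})$. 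Using the formula $\delta_{u\smCirct v}(\eta\otimes\eta')=\eta_{(0)}\otimes\eta'_{(0)}\otimes\eta_{(1)}\eta'_{(1)}$ recalled in the proof of Proposition~\ref{PropCoactAlgHomAxy}, the intertwining property of $\bar R_a$ translates into $V\rho W^{t}=\rho$, equivalently $\rho W^{t}=V^{*}\rho$ by the unitarity of $V$. Since moreover $\kappa_a(e_m)=(e_m^{*}\otimes\id)\bar R_a(1)=\sum_j r_{mj}e'_j$, evaluating the two sides of the identity at $e_m$ gives $\sum_i e'_i\otimes(\rho W^{t})_{mi}$ on the left and $\sum_i e'_i\otimes(V^{*}\rho)_{mi}$ on the right, which agree precisely by $\rho W^{t}=V^{*}\rho$. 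One sees along the way that the outcome does not depend on the chosen solution of the conjugate equations, as already observed after Definition~\ref{DefStar}.

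The hard part is the bookkeeping in the reduction step, not a deep point: one must keep track of which summand of the direct sum each term lands in, of the conjugate-linearity of both $\kappa_a$ and the $^*$-operation of $\PW(\G)$ (so that $\kappa_a\otimes{}^*$ is a legitimate conjugate-linear map), and of the fact that the module-category associators hidden inside $\lu{a}{f^a}$ drop out of the final step. Once the reduction to $\delta_{\bar a}\circ\kappa_a=(\kappa_a\otimes{}^*)\circ\delta_a$ is in place, the remaining input — the intertwining property of $\bar R_a$ together with the unitarity of $u_a$ — is short and standard.
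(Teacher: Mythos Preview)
Your proof is correct and follows essentially the same route as the paper's. Both arguments strip off the common morphism leg $(\lu{a}{f^a})^*$ and reduce the statement to a Hilbert-space identity whose content is precisely that $\bar R_a\in\Mor(u_o,u_a\Circt u_{\bar a})$ together with the unitarity of $u_a$ forces $(u_{\bar a})_{23}(\bar R_a)_{12}=(u_a^*)_{13}(\bar R_a)_{12}$; you write this out as the matrix identity $\rho W^{t}=V^{*}\rho$, while the paper keeps it in leg-numbering notation, but the underlying computation is the same.
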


\begin{proof}
The coaction on $f^*$ can be written as
\[
\lbrack e_y(R_a^* \otimes \id_y) \phi^*_{\bar{a},a,y}(\id_{\bar{a}} \otimes f^{a *})\rbrack \otimes (f_a^* \otimes u_{\bar{a}}) (\bar{R}_a(1) \otimes 1).
\]
Since $\bar{R}_a \in \Mor(u_o, u_a \Circt u_{\bar{a}})$, one has
\[
 (u_{\bar{a}})_{2 3} (\bar{R}_a)_{1 2} = (u_a^*)_{1 3} (u_a)_{1 3} (u_{\bar{a}})_{2 3} (\bar{R}_a)_{1 2} = (u_a^*)_{1 3} (\bar{R}_a)_{1 2}.
\]
Thus, we obtain
\[
\alpha_y^x(f^*) = \lbrack e_y(R_a^* \otimes \id_y) \phi^*_{\bar{a},a,y}(\id_{\bar{a}} \otimes f^{a *})\rbrack \otimes (u(f_a\otimes 1))_{13}^* (\bar{R}_a(1) \otimes 1) = \alpha_x^y(f)^*,
\]
which proves the assertion.
\end{proof}

\begin{Lem}\label{LemMat}
There is a natural equivariant $^*$-isomorphism
\[
\aA_{x\oplus y}^{x\oplus y} \cong \begin{pmatrix}\aA_x^x & \aA_x^y \\ \aA_y^x & \aA_y^y\end{pmatrix}.
\]
\end{Lem}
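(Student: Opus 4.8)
The plan is to realise the matrix algebra on the right as the Peirce decomposition of the unital equivariant $^*$-algebra $\aA_{x\oplus y}^{x\oplus y}$ attached to the direct-sum structure of $x\oplus y$ in $\cat D$. Since $\cat D$ is semi-simple, fix a realisation of $x\oplus y$ together with the isometric inclusions $\iota_x\in\Mor(x,x\oplus y)$, $\iota_y\in\Mor(y,x\oplus y)$, satisfying $\iota_i^*\iota_j=\delta_{ij}\id_i$ and $\iota_x\iota_x^*+\iota_y\iota_y^*=\id_{x\oplus y}$. Through the faithful $^*$-functor $\cat D\hookrightarrow\aA$ of Lemma~\ref{LemMorphi} we view $\iota_x,\iota_y$ and their adjoints as elements of the relevant spaces $\aA_\bullet^\bullet$, and set $p_i=\iota_i\iota_i^*\in\aA_{x\oplus y}^{x\oplus y}$. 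Because that functor preserves composition and the $^*$-operation, $p_x,p_y$ are mutually orthogonal self-adjoint idempotents with $p_x+p_y=1_{x\oplus y}$; being images of morphisms of $\cat D$ they lie in the $a=o$ summand of $\aA_{x\oplus y}^{x\oplus y}$, hence are $\G$-invariant. One also records that the right-hand side is genuinely a unital equivariant $^*$-algebra: the $\aA$-products $\aA_i^j\times\aA_j^k\to\aA_i^k$ and the $^*$-maps $\aA_i^j\to\aA_j^i$ make the conjugate-transpose matrix product and involution well defined, associativity and the involution and coaction axioms follow slotwise from the associativity of the $\aA$-product, the anti-multiplicativity and involutivity of $^*$, and Propositions~\ref{PropCoactAlgHomAxy} and~\ref{LemStarCoact}, while $\mathrm{diag}(1_x,1_y)$ is the unit by Proposition~\ref{PropUnit}.

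Next, for $i,j\in\{x,y\}$ define the linear map $\Phi_{ij}\colon\aA_i^j\to\aA_{x\oplus y}^{x\oplus y}$ by $\Phi_{ij}(f)=\iota_i\cdot f\cdot\iota_j^*$, all products taken in the category $\aA$, and let $\Phi$ be the assembled map from the matrix algebra to $\aA_{x\oplus y}^{x\oplus y}$. Using that $\iota_i^*\iota_i=1_i$ is a two-sided unit (Proposition~\ref{PropUnit}) one gets $\iota_i^*\Phi_{ij}(f)\iota_j=(\iota_i^*\iota_i)f(\iota_j^*\iota_j)=f$, and $\sum_{i,j}\Phi_{ij}(\iota_i^*g\iota_j)=\sum_{i,j}p_igp_j=1_{x\oplus y}\,g\,1_{x\oplus y}=g$; hence $\Phi$ is a linear isomorphism with inverse $g\mapsto(\iota_i^*g\iota_j)_{i,j}$.

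It remains to check that $\Phi$ preserves the three structures. Multiplicativity: $\Phi_{ij}(f)\Phi_{kl}(g)=\iota_i f(\iota_j^*\iota_k)g\iota_l^*=\delta_{jk}\,\iota_i(fg)\iota_l^*=\delta_{jk}\,\Phi_{il}(fg)$, using $\iota_j^*\iota_k=\delta_{jk}1_j$; this is exactly matrix multiplication, and $\Phi(\mathrm{diag}(1_x,1_y))=p_x+p_y=1_{x\oplus y}$. The $^*$-operation: anti-multiplicativity of $^*$ together with $(\iota_i)^*=\iota_i^*$ gives $\Phi_{ij}(f)^*=\iota_j f^*\iota_i^*=\Phi_{ji}(f^*)$, the conjugate-transpose on the matrix side. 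Equivariance: applying Proposition~\ref{PropCoactAlgHomAxy} twice and using the invariance $\alpha(\iota_i)=\iota_i\otimes1$, $\alpha(\iota_j^*)=\iota_j^*\otimes1$ yields $\alpha_{x\oplus y}^{x\oplus y}(\Phi_{ij}(f))=(\iota_i\otimes1)\,\alpha_i^j(f)\,(\iota_j^*\otimes1)=(\Phi_{ij}\otimes\id)(\alpha_i^j(f))$, which is the entrywise coaction on the matrix algebra. A bijective equivariant $^*$-homomorphism is an equivariant $^*$-isomorphism, so this finishes the construction of the isomorphism.

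Finally, naturality in $(x,y)$ is obtained by observing that the construction is intrinsic to the direct-sum structure: two realisations of $x\oplus y$ give isomorphisms that differ by conjugation with the canonical unitary identifying them, which is an invertible element of $\aA_{x\oplus y}^{x\oplus y}$ coming from $\cat D$, and compatibility with morphisms of $\cat D$ acting on the $x$- and $y$-slots is verified in the same manner, using multiplicativity of the $\aA$-product and of $\alpha$. I do not expect a genuine obstacle here; the only non-mechanical ingredient is the compatibility of the embedding $\cat D\hookrightarrow\aA$ with composition and with $^*$ — precisely the content of Lemma~\ref{LemMorphi} — which is what licenses the identities $\iota_j^*\iota_k=\delta_{jk}1_j$ and $(\iota_i)^*=\iota_i^*$ inside $\aA$, after which every step reduces to an already-established structural identity. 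If anything is mildly delicate it is keeping the upper/lower index bookkeeping straight so that the $\aA$-products used to form $\Phi_{ij}(f)$ and its inverse land in the correct spaces, and checking that the resulting composition law on the corners $p_i\aA_{x\oplus y}^{x\oplus y}p_j\cong\aA_i^j$ matches matrix multiplication rather than its opposite.
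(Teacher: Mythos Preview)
Your proof is correct and amounts to an explicit, carefully book-kept unwinding of the paper's one-line argument, which simply invokes the natural decomposition $\Mor(u_a\otimes(x\oplus y),x\oplus y)\cong\bigoplus_{i,j\in\{x,y\}}\Mor(u_a\otimes j,i)$ and asserts that it ``passes through all further structure'' imposed on the $\aA_i^j$. The only minor imprecision is your attribution of $(\iota_i)^*=\iota_i^*$ to Lemma~\ref{LemMorphi}: that lemma only establishes a \emph{linear} functor $\cat D\hookrightarrow\aA$, so the $^*$-compatibility at $a=o$ requires a separate (routine) verification from Definition~\ref{DefStar}, which the paper itself only tacitly uses later when referring to $\cat D$ as a sub-$^*$-category.
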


\begin{proof}
This follows from the natural decomposition
\[
\End(x\oplus y)\cong \begin{pmatrix} \End(x)& \Mor(y,x)\\ \Mor(x,y)&\End(y)\end{pmatrix},
\]
which passes through all further structure imposed on the $\aA_x^y$.
\end{proof}

\begin{Lem}\label{LemInv}
We have $(\aA_x^y)^{\G} = \Mor(y, x)$. Furthermore, for $f\in \aA_y^y$, we have \[(\id \otimes \varphi_{\G})(\alpha_y^y(f)) = f^o f_o e_y^* \in \End(y).\]
\end{Lem}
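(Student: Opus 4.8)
The plan is to compute the fixed point space and the conditional expectation directly from the definitions, using the explicit description of the $\PW(\G)$-comodule structure $\alpha_y^y = \bigoplus_a (\id \otimes \delta_a)$ together with the Peter--Weyl-type invariance property of $\varphi_{\G}$. First I would recall that an element $f \in \aA_y^y$ decomposes as $f = \sum_a f^a \otimes f_a$ with $f^a \in \Mor(u_a \otimes y, y)$ and $f_a \in \Hsp_a$, and that $\alpha_y^y(f) = \sum_a f^a \otimes \delta_a(f_a)$, where $\delta_a$ is the comodule structure on $\Hsp_a$ coming from the representation $u_a$ of $\G$ as in Example~\ref{ExaHilb}. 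The fixed point condition $\alpha_y^y(f) = f \otimes 1$ then forces, for each $a$, the relation $f^a \otimes \delta_a(f_a) = f^a \otimes f_a \otimes 1$, i.e. (on the part where $f^a \neq 0$) the vector $f_a$ is $\G$-invariant in $\Hsp_a$. Since $u_a$ is irreducible, $\Hsp_a$ has a nonzero invariant vector only when $a = o$, in which case $\Hsp_o = \mathbb{C}$ and every vector is invariant. Hence an invariant $f$ is supported on the single index $o$, so $f = f^o \otimes f_o$ with $f^o \in \Mor(u_o \otimes y, y)$ and $f_o \in \mathbb{C}$; absorbing the scalar $f_o$ and using that $\Mor(u_o\otimes y,y) \cong \Mor(y,y)$ via the structure map $e_y$ (postcomposition/precomposition with $e_y$, $e_y^*$), we identify $(\aA_y^y)^{\G}$ with $\End(y) = \Mor(y,y)$. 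To get the general statement $(\aA_x^y)^{\G} = \Mor(y,x)$ I would run the identical argument with $x$ in place of one of the $y$'s, or alternatively invoke Lemma~\ref{LemMat} to reduce the off-diagonal case $\aA_x^y$ to a corner of $\aA_{x\oplus y}^{x\oplus y}$ and read off the block decomposition of $\End(x\oplus y)$; since the excerpt already identified $\Mor(y,x)$ with its image in $\aA_x^y$ via Lemma~\ref{LemMorphi}, the claim is that the fixed points are exactly this image, which the computation above confirms.

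For the formula $(\id \otimes \varphi_{\G})(\alpha_y^y(f)) = f^o f_o e_y^* \in \End(y)$, I would write out the left-hand side using $\alpha_y^y(f) = \sum_a f^a \otimes \delta_a(f_a)$, so that
\[
(\id \otimes \varphi_{\G})(\alpha_y^y(f)) = \sum_a f^a \otimes (\id \otimes \varphi_{\G})(\delta_a(f_a)).
\]
The key input is that $(\id\otimes\varphi_\G)\circ\delta_a$ is the projection of $\Hsp_a$ onto its $\G$-invariant subspace: this is exactly the content of the invariance property~\eqref{EqInvState} of the Haar state applied to the matrix coefficients of $u_a$, i.e. $(\id\otimes\varphi_\G)(\delta_a(\xi))$ is the image of $\xi$ under the orthogonal projection onto $\{\eta \in \Hsp_a : \delta_a(\eta) = \eta\otimes 1\}$. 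For $a \neq o$ this invariant subspace is zero, so only the $a = o$ term survives; for $a = o$ we have $\Hsp_o = \mathbb{C}$, $\delta_o$ is trivial, and the projection is the identity, so the surviving term is $f^o \otimes f_o$ viewed inside $\End(y)$. Finally, $f^o \in \Mor(u_o\otimes y, y)$ and the scalar $f_o \in \mathbb{C}$ combine, and to land in $\End(y) = \Mor(y,y)$ we must precompose with $e_y^{-1} = e_y^*$ (recall $e_y\colon \mathbbm{1}\otimes y \to y$ is a unitary, identifying $u_o\otimes y$ with $y$), giving precisely $f^o f_o e_y^*$. I would make the identification of $(\id\otimes\varphi_\G)\circ\delta_a$ with the invariant projection precise by the standard Peter--Weyl argument: decompose $u_a$ trivially since it is irreducible and non-trivial for $a\neq o$, so $(\id\otimes\varphi_\G)(u_a) = 0$ in that case, while $u_o = 1$.

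The main obstacle I anticipate is purely bookkeeping: keeping straight the various canonical identifications — $\Hsp_o = \mathbb{C}$, $\Mor(u_o\otimes y, y) \cong \End(y)$ via $e_y, e_y^*$, and the embedding of $\Mor(y,x)$ into $\aA_x^y$ from Lemma~\ref{LemMorphi} — so that the final expression genuinely lives in $\End(y)$ with the stated normalization, rather than differing by a factor of $e_y$ or $e_y^*$. There is no serious analytic or categorical difficulty here; the only thing that requires care is checking that the formula for $\,^*$, the product, and the coaction are all compatible with these identifications, but Proposition~\ref{PropUnit} and Lemma~\ref{LemMorphi} have already done this work, so I would simply cite them. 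A secondary point to be careful about is that the irreducibility of $u_a$ is what kills the $a\neq o$ contributions in \emph{both} parts of the lemma — this is the single representation-theoretic fact doing all the work, and it should be stated once and used twice.
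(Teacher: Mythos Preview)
Your proposal is correct and follows exactly the approach the paper has in mind: the paper's own proof is the single sentence ``These formulas follow from the definition of $\alpha_x^y$ and the orthogonality of irreducible representations,'' and your write-up is precisely the unpacking of that sentence. The only cosmetic remark is that your phrasing ``on the part where $f^a \neq 0$'' is slightly loose given the Sweedler-style notation (the $a$-component is a sum of simple tensors, not a single one), but the conclusion --- that the $a$-component must lie in $\Mor(u_a\otimes y,x)\otimes (\Hsp_a)^{\G}$ and hence vanish for $a\neq o$ --- is exactly right.
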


\begin{proof}
These formulas follow from the definition of $\alpha_x^y$ and the orthogonality of irreducible representations.
\end{proof}

\begin{Theorem}\label{ThmAlgActionFromModCat}
For each object $y$ of $\mathcal{D}$, the coaction of $\PW(\G)$ on $\aA_y^y$ defines an algebraic action of $\G$.
\end{Theorem}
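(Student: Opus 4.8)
The plan is to verify the three ingredients in the definition of an algebraic action of $\G$: that $\alpha_y^y$ is a Hopf $^*$-algebra coaction on the unital $^*$-algebra $\aA_y^y$, that $(\aA_y^y)^{\G}$ is a unital $\cC^*$-algebra, and that the positivity condition $\ConPos$ holds. The first two are essentially already in hand. The unital $^*$-algebra structure on $\aA_y^y$ is provided by the preceding propositions. Since $\alpha_y^y=\bigoplus_{a}(\id\otimes\delta_a)$ and each $\delta_a$ is the coaction attached to the corepresentation $u_a$ (Example~\ref{ExaHilb}), coassociativity $(\id\otimes\Delta)\alpha_y^y=(\alpha_y^y\otimes\id)\alpha_y^y$ and counitality follow at once from the corepresentation identities, and $\alpha_y^y$ takes values in $\aA_y^y\otimes\PW(\G)$ because matrix coefficients of finite-dimensional representations lie in $\PW(\G)$; that $\alpha_y^y$ is an algebra homomorphism is the case $x=y=z$ of Proposition~\ref{PropCoactAlgHomAxy}, and that it intertwines the $^*$-operations is Proposition~\ref{LemStarCoact}. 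For the fixed-point algebra, Lemma~\ref{LemInv} gives $(\aA_y^y)^{\G}=\End(y)$, which by Lemma~\ref{LemMorphi} is embedded $^*$-preservingly, hence with its original $\cC^*$-structure; since $\cat{D}$ is semi-simple this is a finite-dimensional $\cC^*$-algebra, unital with unit $1_y=e_y\otimes 1$.

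The real content is the complete positivity of $E_{\G}\colon\aA_y^y\to\End(y)$. I would first reduce this to a positivity statement about a single object. Iterating the equivariant $^*$-isomorphism of Lemma~\ref{LemMat}, one obtains $M_n(\aA_y^y)\cong\aA_{y^{\oplus n}}^{y^{\oplus n}}$, carrying the $n$-fold amplification of $E_{\G}$ (on $\aA_y^y$) to $E_{\G}$ on $\aA_{y^{\oplus n}}^{y^{\oplus n}}$ and $M_n(\End(y))$ to $\End(y^{\oplus n})$, because this isomorphism is equivariant and hence intertwines $(\id\otimes\varphi_{\G})\circ\alpha$. Therefore $\ConPos$ is equivalent to the assertion that $E_{\G}(h^{*}h)\geq 0$ in $\End(z)$ for every object $z$ of $\cat{D}$ and every $h\in\aA_z^z$, since each $y^{\oplus n}$ is an object of $\cat{D}$.

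To prove this I would fix normalized solutions $(R_a,\bar R_a)$ of the conjugate equations for all $a\in I$ and compute $E_{\G}(f^{*}f)$ for $f\in\aA_x^y$ directly, using the formula $E_{\G}(h)=h^{o}h_{o}e_y^{*}$ of Lemma~\ref{LemInv} together with the definitions of the product and of the conjugation map. After expanding $f^{*}f$, only the terms in which the second index is $\bar a$ contribute to the $o$-component; running the conjugate equations and the coherence and naturality of $\phi$ and $e$ in reverse — exactly in the style of the proofs that $^{*}$ is anti-multiplicative and involutive — the expression should collapse to
\[
E_{\G}(f^{*}f)=\sum_{a\in I}\frac{1}{\dim_q(u_a)}\,(\lu{a}f)^{*}(\lu{a}f),
\]
a sum, with strictly positive coefficients, of elements $(\lu{a}f)^{*}(\lu{a}f)\in\End(y)$ with $\lu{a}f\in\Mor(y,\bar a\otimes x)$; each summand is manifestly positive. (Equivalently the right-hand side is $\sum_a\dim_q(u_a)^{-1}$ times the categorical partial trace over $u_a$ of the positive morphism $(f^{a})^{*}f^{a}\in\End(u_a\otimes y)$.) Taking $x=y=z$ arbitrary and feeding this back through the reduction above yields the complete positivity of $E_{\G}$.

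The main obstacle is precisely this last computation: matching the abstract combinatorics of the $o$-component of $f^{*}f$ against the Frobenius-twisted morphisms $\lu{a}f$ requires careful bookkeeping of the module associators $\phi$ and the duality morphisms $R_a$, of the same kind as in the preceding propositions, and one must be attentive to the normalization that produces the weights $\dim_q(u_a)^{-1}$ — equivalently, that absorbs the modular operators of $\G$ into the choice of the $R_a$. Everything else is formal.
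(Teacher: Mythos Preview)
Your approach is essentially the paper's own: reduce complete positivity to ordinary positivity via the equivariant matrix isomorphism $M_n(\aA_y^y)\cong\aA_{y^{\oplus n}}^{y^{\oplus n}}$ from Lemma~\ref{LemMat}, then extract the $o$-component of $f^*g$ and recognise the result as a weighted categorical partial trace. One small slip: your displayed formula should carry the scalar factor $\langle f_a,f_a\rangle$ in each summand, i.e.\ the paper obtains
\[
E_{\G}(f^*g)=\sum_{a}\frac{\langle f_a,g_a\rangle}{\dim_q(u_a)}\,(\lu{a}f)^*(\lu{a}g),
\]
which of course does not affect the positivity conclusion.
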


\begin{proof}
The only thing left to prove is the complete positivity \ConPos\@ for the map $E_{\G} = (\id \otimes \varphi_{\G})\circ \alpha_y^y$.  By Lemma~\ref{LemMat}, it is enough to show that $E_\G$ is positive on $\aA_y^y$ for arbitrary $y$.  Let $f, g\in \aA_y^y$.  Then we have
\begin{multline*}
f^* g = \lbrack e_y (R_a^* \otimes \id_y)\phi_{\bar{a},a,y}^* (\id_{\bar{a}} \otimes f^{a *}g^b)\phi_{\bar{a},a,y}((((f_a^* \otimes \id_{\bar{a}}) \bar{R}_a(1) \otimes g_b))^c\otimes\id_y) \rbrack \\ \otimes ((f_a^* \otimes \id_{\bar{a}}) \bar{R}_a(1) \otimes g_b)_c.
\end{multline*}
Applying $E_\G$ to this means taking the value at $c = o$.

Since $u_a$ and $u_b$ are irreducible, there exists an embedding of $u_o$ into $u_{\bar{a}} \Circt u_b$ if and only if $b = a$. In that case an isometric embedding is given by $(\dim_q u_a)^{-1/2} R_a$ for the normalized choice of $(R_a, \bar{R}_a)$.  Thus, we obtain, using the conjugate equations for $(R_a,\bar{R}_a)$ in the last step,
\begin{align*}
((f_a^* \otimes \id_{\bar{a}}) \bar{R}_a(1) \otimes g_a)^o((f_a^* \otimes \id_{\bar{a}}) \bar{R}_a(1) \otimes g_a)_o &=  \frac{1}{\dim_q u_a} (f_a^* \otimes R_a^*) (\bar{R}_a(1) \otimes g_a) R_a \\
&= \frac{\langle f_a,g_a\rangle}{\dim_q u_a}R_a
\end{align*}
as a morphism from $u_o$ to $u_{\bar{a}}\Circt u_a$. Hence,
\begin{align*}
E_\G(f^* g) &= \frac{ \langle f_a,g_a\rangle}{\dim_q u_a} \cdot e_y(R_a^* \otimes \id_y) \phi_{\bar{a},a,y}^*(\id_{\bar{a}} \otimes f^{a *} g^a)\phi_{\bar{a},a,y} (R_a \otimes \id_y)e_y^*\\
&= \frac{1}{\dim_q u_a}  \cdot e_y(R_a^* \otimes \id_y) \phi_{\bar{a},a,y}^*(\id_{\bar{a}} \otimes \langle f,g\rangle_{\Mor(y,y)}) \phi_{\bar{a},a,y} (R_a \otimes \id_y)e_y^*,
\end{align*}
where $\langle f,g\rangle_{\Mor(y,y)} = \langle f_a,g_a\rangle f^{a*}g^a$ is the standard $\Mor(y,y)$-valued inner product on $\aA_x^y$. From this formula, it follows that $E_{\G}$ is indeed completely positive.
\end{proof}

\begin{Rem} In \cite{Ost1}, the construction of an action from a module category is carried out internally within the tensor category. There are two obstacles for attempting such a construction in our setting. The first obstacle is a finiteness problem, in that the algebra underlying an ergodic action will in general live inside a completion of the tensor category. This could be taken care of by standard techniques. The second obstacle is that we want our algebras to be endowed with a good $^*$-structure. Now ergodic actions on \emph{finite-dimensional} C$^*$-algebras can be characterized abstractly inside of $\Rep(\G)$ as (irreducible) abstract $Q$-systems (\cite{Lon1}, \cite{Mug3}). However, the definition of $Q$-system is too restrictive if we want to allow non-finite quantum homogeneous spaces. So although it seems manageable to lift both of the above obstacles separately, we do not know how to tackle them in combination.
\end{Rem}

At this stage, we can apply the material developed in the previous section.

\begin{Not}\label{NotBlock}
For each object $y$ in $\mathcal{D}$, we denote the $\G$-$\cC^*$-algebraic completion of $\aA_y^y$ (see Proposition~\ref{LemComp}) by $A_y^y$. We denote the block decomposition of $A_{x \oplus y}$ induced by the isomorphism of Lemma~\ref{LemMat} as
\[
A_{x\oplus y}^{x\oplus y} = \begin{pmatrix} A_x^x & A_x^y \\ A_y^x & A_y^y\end{pmatrix}.
\]
\end{Not}

In this way, for general $x,y$, the space $A_y^x$ naturally has the structure of an equivariant right Hilbert $A_y^y$-module, together with a unital $^*$-homomorphism from $A_x^x$ into $\mathcal{L}_{A_y^y}(A_x^y)$.

\begin{Lem}\label{PropFinHilb} When $x$ and $y$ are objects in $\cat{D}$ with $y$ irreducible, then the action of $\G$ on $A_y^y$ is ergodic, and $A_x^y$ is a finite equivariant Hilbert $A_y^y$-module.
\end{Lem}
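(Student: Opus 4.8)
The plan is to prove the two assertions in turn: ergodicity of $A_y^y$ follows at once from the algebraic identities already in hand, while finiteness of $A_x^y$ will be deduced, via Proposition~\ref{PropFinn}, from a computation of the equivariant endomorphism algebra $\mathcal{L}_{\G}(A_x^y)$.

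For \emph{ergodicity}, I would combine Proposition~\ref{LemComp}, which gives $(A_y^y)^{\G}=(\aA_y^y)^{\G}$, with Lemma~\ref{LemInv}, which identifies this with $\Mor(y,y)$; as $y$ is a simple object of $\cat{D}$ the latter is $\C$. The same computation applies to any simple object $z$ of $\cat{D}$, so each $A_z^z$ is ergodic, carries its (unique) faithful invariant state, and therefore has no proper $\G$-invariant closed ideal.

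For \emph{finiteness} of $A_x^y$ I would first reduce to the case $x$ irreducible: decomposing $x\cong\bigoplus_i m_i x_i$ into simple objects and iterating Lemma~\ref{LemMat} together with Notation~\ref{NotBlock} yields an equivariant isomorphism $A_x^y\cong\bigoplus_i m_i A_{x_i}^y$ of Hilbert $A_y^y$-modules, and a finite direct sum of finite equivariant Hilbert modules is finite; moreover $A_x^y=0$ (hence trivially finite) whenever $\aA_x^y=0$. So assume $x,y$ both irreducible with $A_x^y\neq 0$; then $A_x^x$ is ergodic by the previous paragraph, and it suffices to show that $\mathcal{L}_{\G}(A_x^y)$ is finite-dimensional. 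Now view $A_x^y$ as the corner $1_x A_{x\oplus y}1_y$ of $A:=A_{x\oplus y}$, with $A_x^x$ acting by left multiplication (Notation~\ref{NotBlock}). A rank-one operator $\theta_{g,h}$ on the Hilbert $A_y^y$-module $A_x^y$ is left multiplication by $gh^*\in A_x^x$, so $\mathcal{K}_{A_y^y}(A_x^y)$ is the image of the closed $\G$-invariant ideal $\mathcal{I}:=\overline{\mathrm{span}}\{gh^*:g,h\in A_x^y\}$ of $A_x^x$. Since $A_x^x$ is ergodic it has no proper $\G$-invariant ideal, and $\mathcal{I}\neq 0$ because $A_x^y\neq 0$, so $\mathcal{I}=A_x^x$; hence $1_y$ is a full projection in $A$, left multiplication $A_x^x\to\mathcal{L}_{A_y^y}(A_x^y)$ is injective, and $\mathcal{K}_{A_y^y}(A_x^y)\cong A_x^x$. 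As $A_x^x$ is unital (with unit $1_x$), so is $\mathcal{K}_{A_y^y}(A_x^y)=\mathcal{L}_{A_y^y}(A_x^y)$, and this isomorphism is $\G$-equivariant; therefore $\mathcal{L}_{\G}(A_x^y)\cong(A_x^x)^{\G}=\End(x)=\C$, which is finite-dimensional, so $A_x^y$ is finite by Proposition~\ref{PropFinn}.

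The step I expect to be the main obstacle is this identification of $\mathcal{L}_{\G}(A_x^y)$: the crux is showing that $1_y$ is a full projection in $A_{x\oplus y}$, which relies on the ergodicity of $A_x^x$ and the consequent absence of proper $\G$-invariant ideals. The reduction via Lemma~\ref{LemMat} and the bookkeeping with corners of $A_{x\oplus y}$ should be routine.
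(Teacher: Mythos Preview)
Your proposal is correct and follows essentially the same route as the paper: reduce to $x$ irreducible via the block decomposition of Lemma~\ref{LemMat}, obtain ergodicity of $A_y^y$ and $A_x^x$ from Lemma~\ref{LemInv} together with Proposition~\ref{LemComp}, and then use that an ergodic $\G$-$\cC^*$-algebra has no proper $\G$-invariant closed ideals (the paper invokes Remark~\ref{RemHilb} for this, while you reproduce the argument) to conclude that either $A_x^y=0$ or the left multiplication map identifies $A_x^x$ with $\mathcal{K}_{A_y^y}(A_x^y)$. One minor redundancy: once you have shown that $\mathcal{K}_{A_y^y}(A_x^y)$ is unital you already have that $A_x^y$ is finitely generated projective, so the closing appeal to Proposition~\ref{PropFinn} is superfluous --- the paper stops at exactly that point.
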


\begin{proof} From the block decomposition as in Notation \ref{NotBlock}, we may as well suppose that also $x$ is irreducible. Then by Lemma \ref{LemInv} and Proposition \ref{LemComp}, we obtain that the actions on $A_y^y$ and $A_x^x$ are ergodic. Since the image of $A_x^x$ in $\mathcal{L}(A_x^y)$ must by construction contain $\mathcal{K}_{A_y^y}(A_y^y)$, we deduce from Remark \ref{RemHilb} that either we have an identification $A_x^x \cong \mathcal{K}_{A_y^y}(A_x^y)$, in which case $A_x^y$ is in particular finitely generated projective, or else $A_x^y = 0$.
\end{proof}

The $A_x^y$ are Banach spaces with the $^*$-operations $A_x^y \rightarrow A_y^x$ satisfying the $\cC^*$-condition.  It follows that we can make a $\cC^*$-category $A$ having the same objects as $\cat{D}$, and with morphism space from $x$ to $y$ given by the Banach space $A_y^x$. By Lemma \ref{LemMorphi}, it contains a faithful copy of the $\cC^*$-category $\mathcal{D}$, which are precisely the fixed points under the $\G$-action on the morphism spaces.

\begin{Prop}\label{LemDcatIntoAcat} Let $y$ be a fixed irreducible object in $\cat{D}$, and let $A^y$ be the category with \begin{itemize} \item[$\bullet$] objects the $A_x^y$, where $x$ ranges over the objects in $\cat{D}$, and
\item[$\bullet$] with morphism space $\Mor_{A^y}(z,x)$ the space $\mathcal{K}_{A_y^y}(A_z^y,A_x^y)$.
\end{itemize}
Then we have a $\cC^*$-functor $F_y\colon A \rightarrow A^y$, sending $x$ to $A_x^y$ and an element $f\in A_x^z$ to left multiplication with this element. Moreover, the resulting maps $\Mor_A(z,x)\rightarrow \mathcal{K}_{A_y^y}(A_z^y,A_x^y)$ are $\G$-equivariant.
\end{Prop}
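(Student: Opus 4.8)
The plan is to produce the functor $F_y$ essentially by construction: on objects send $x$ to the right Hilbert $A_y^y$-module $A_x^y$, and on morphisms send $f \in \Mor_A(z,x) = A_x^z$ to the operator $L_f$ of left multiplication by $f$, using the multiplication map $A_x^z \times A_z^y \to A_x^y$ coming from Definition~\ref{DefPenEinNotComp} after $\cC^*$-completion (as recorded in Notation~\ref{NotBlock}). Functoriality (compatibility with composition and with units) is immediate from associativity of the multiplication and Proposition~\ref{PropUnit}; linearity is clear. So the real content is \emph{well-definedness}: one must check that $L_f$ actually lands in $\mathcal{K}_{A_y^y}(A_z^y, A_x^y)$, i.e.\ that left multiplication by an element of $A_x^z$ is an adjointable, indeed compact, operator $A_z^y \to A_x^y$; and that $F_y$ preserves the $^*$-operation, i.e.\ that $L_f^* = L_{f^*}$ with $f^*$ the conjugation map of Definition~\ref{DefStar}.

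For well-definedness I would argue as follows. Consider the object $w = x \oplus z \oplus y$ of $\cat{D}$; by Lemma~\ref{LemMat} and Notation~\ref{NotBlock} the $\cC^*$-algebra $A_w^w$ has a $3\times 3$ block structure whose blocks are exactly the $A_\bullet^\bullet$, and the block $A_w^y$ (the last column) carries its natural structure of right Hilbert $A_y^y$-module on which $A_w^w$, and in particular the $A_x^z$-block, acts by left multiplication. Since multiplication in a $\cC^*$-algebra is bounded and compatible with the $^*$-operation, each $f \in A_x^z \subseteq A_w^w$ gives an adjointable left-multiplication operator on the Hilbert module $A_w^y$, with adjoint given by $f^* \in A_z^x$; restricting to the $A_z^y$ summand and projecting onto the $A_x^y$ summand shows $L_f \colon A_z^y \to A_x^y$ is adjointable with $L_f^* = L_{f^*}$. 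This simultaneously gives the $^*$-compatibility of $F_y$, using that the $^*$-operation $A_x^z \to A_z^x$ of the category $A$ is by construction the conjugation map of Definition~\ref{DefStar} (via Lemma~\ref{LemMat}, which says this block identification respects all imposed structure). That $L_f$ is \emph{compact} rather than merely adjointable follows from Lemma~\ref{PropFinHilb}: since $y$ is irreducible, each $A_x^y$ is a finite equivariant Hilbert $A_y^y$-module, so $\mathcal{L}_{A_y^y}(A_z^y, A_x^y) = \mathcal{K}_{A_y^y}(A_z^y, A_x^y)$ and the distinction evaporates.

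For the $\G$-equivariance of the maps $\Mor_A(z,x) = A_x^z \to \mathcal{K}_{A_y^y}(A_z^y,A_x^y)$, I would equip the target with the coaction induced by the coaction on the Hilbert module $A_x^y$ (conjugation by the associated unitary morphism $X_{A_x^y}$ of Definition~\ref{DefEqvrModUntryMorRep}), and then the claim is that $\alpha(L_f) = L_{\alpha_x^z(f)}$ in the appropriate sense. This is exactly the statement that the multiplication $A_x^z \times A_z^y \to A_x^y$ is equivariant, which on the algebraic level $\aA_x^z \times \aA_z^y \to \aA_x^y$ is Proposition~\ref{PropCoactAlgHomAxy}; one then passes to the $\cC^*$-completions, where everything extends by continuity since the coactions on the completions were built precisely to be continuous extensions (Proposition~\ref{LemComp}).

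I expect the genuinely delicate point to be the bookkeeping in the well-definedness step: one must be careful that the Hilbert $A_y^y$-module structure on $A_x^y$ read off from the big algebra $A_{x\oplus y}^{x\oplus y}$ via the block decomposition of Lemma~\ref{LemMat} coincides with the one described right after Notation~\ref{NotBlock} (so that the left action of $A_x^x$ referred to there is the same as left multiplication in the big algebra), and that the compact-operator identification uses irreducibility of $y$ in an essential way. Everything else is bounded-linear-extension of identities already proved at the algebraic level, so the argument is really ``assemble the big algebra, read off a column as a Hilbert module, and restrict.''
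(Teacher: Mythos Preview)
Your proposal is correct and follows essentially the same approach as the paper: the paper's proof also invokes that the $A_x^y$ are finitely generated projective (hence $\mathcal{L}=\mathcal{K}$), declares functoriality a formality, and cites Proposition~\ref{PropCoactAlgHomAxy} for equivariance. Your explicit use of the $3\times 3$ block structure on $A_{x\oplus z\oplus y}^{x\oplus z\oplus y}$ to obtain adjointability and $L_f^*=L_{f^*}$ is a useful unpacking of what the paper leaves implicit in the sentence following Notation~\ref{NotBlock} (which only spells out the $A_x^x$-action on $A_x^y$, not the off-diagonal $A_x^z$-action), but the idea is the same.
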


\begin{proof}
Since the modules $A_x^y$ are finitely generated projective over $A_y^y$, left multiplication with elements in $A_x^z$ indeed gives compact operators from $A_z^y$ to $A_x^y$. The functoriality of the given map is then a formality to check. The equivariance follows from Proposition \ref{PropCoactAlgHomAxy}.
\end{proof}

\begin{Exa}\label{ExaQHomQSubGrp}
Let $\mathbb{H}$ be a quantum subgroup of $\G$.  We have seen in Example~\ref{ExaQSubGrpEtc} that $\Rep(\mathbb{H})$ is a $\Rep(\G)$-module category.  When $w$ is an irreducible unitary representation of $\mathbb{H}$, we find that
\begin{equation*}
\mathscr{A}_w^w \cong \Mor((u_a)_{\mid \mathbb{H}}\otimes w,w)\otimes \Hsp_a \cong (\bar{w}\otimes (\bar{u}_a)_{\mid \mathbb{H}}\otimes w)^{\mathbb{H}}\otimes \Hsp_a \cong (B(\Hsp_w)\otimes P(\G))^{\mathbb{H}},
\end{equation*}
the fixed points being with respect to the $w$-induced left $\mathbb{H}$-action on $B(\Hsp_w)\otimes P(\G)$. It then follows that the action of $\G$ on $\Co{\X_w}$ given by Lemma \ref{PropFinHilb} is equal to the right translation action on the fixed point algebra $(B(\Hsp_w) \otimes \Co{\G})^\mathbb{H}$.
\end{Exa}

\section{Correspondence between the constructions}
\label{SecTanKr}

Let $\G$ be a compact quantum group, and let $\X$ be a quantum homogeneous space over $\G$.  It is known~\cite{Pin1} that the $\G$-algebra $\Co{\X}$ can be recovered from the associated `spectral functor'
\[
u \mapsto \Hom_\G(\Hsp_u, \Co{\X})
\]
on $\Rep(\G)$, where the right hand side simply means the space of $\G$-equivariant linear maps.  In general, if we ignore the problem of completion, any right comodule $\Hmod{E}$ over $\Co{\G}$ can be recovered from its spectral functor by the formula
\begin{equation}\label{EqComdAsSpecFuncTimesForgetful}
\bigoplus_{a \in I} \Hom_{\G}(\mathscr{H}_{a}, \Hmod{E}) \otimes \Hsp_a \simeq \Hmod{E},
\end{equation}
up to completion. The algebra structure of $\Co{\X}$ was recovered from the usual tensor structure on the forgetful functor of $\Rep(\G)$, and the `quasi-tensor' structure on the spectral functor.

The above general scheme and our construction of $\G$-algebra in the previous section are related by the following simple translation.

\begin{Lem}\label{LemSpecFuncModMor}
Let $u \in \Rep(\G)$, and let $(\Hmod{E}, \alpha_{\Hmod{E}})$ be a $\G$-equivariant Hilbert $\cC^*$-module over $\Co{\X}$.  Then one has a natural isomorphism
\begin{equation}\label{EqScExtResAdj}
 \Hom_\G(\Hsp_u, \Hmod{E}) \simeq \Hom_{\G, \Co{\X}}(\Hsp_u \otimes \Co{\X}, \Hmod{E}),
\end{equation}
where the right hand side denotes the space of linear $\G$-equivariant, right $\Co{\X}$-linear maps.
\end{Lem}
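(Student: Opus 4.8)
The plan is to recognize the claimed isomorphism as an instance of a tensor--hom type adjunction, the point being that $\Hsp_u \otimes \Co{\X}$ is the ``free'' equivariant Hilbert $\Co{\X}$-module generated by the equivariant Hilbert space $\Hsp_u$ (i.e.~by the equivariant Hilbert module over the one-point space $\bullet$ of Example~\ref{ExaHilb}), so that maps out of it are determined by their restriction to $\Hsp_u$. Concretely, I would define a map from right to left by restriction: given a $\G$-equivariant, right $\Co{\X}$-linear map $T\colon \Hsp_u \otimes \Co{\X}\rightarrow \Hmod{E}$, set $\rho(T)(\xi) = T(\xi \otimes 1)$ for $\xi\in \Hsp_u$. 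One checks $\rho(T)$ is $\G$-equivariant (here the coaction on $\Hsp_u\otimes\Co{\X}$ is $u_{13}(\delta_u^{\phantom{.}}\boxtimes\alpha_\X)$ restricted appropriately, and $T(\xi\otimes 1)$ has the right transformation behaviour because $\alpha_\X(1)=1\otimes 1$), and clearly $\rho(T)$ is just a linear map $\Hsp_u\rightarrow\Hmod{E}$, i.e.~an element of $\Hom_\G(\Hsp_u,\Hmod{E})$.

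Next I would construct the inverse. Given $S\in \Hom_\G(\Hsp_u,\Hmod{E})$, define $\sigma(S)\colon \Hsp_u\otimes\Co{\X}\rightarrow \Hmod{E}$ by $\sigma(S)(\xi\otimes x) = S(\xi)\cdot x$, extended by linearity and continuity. This is manifestly right $\Co{\X}$-linear, and it is adjointable (bounded) because $\Hsp_u$ is finite-dimensional: choosing an orthonormal basis $(e_i)$ of $\Hsp_u$, one has $\sigma(S)(\sum_i e_i\otimes x_i) = \sum_i S(e_i)x_i$, which is visibly a finite-rank-type combination so bounded with an explicit adjoint. Equivariance of $\sigma(S)$ then follows from equivariance of $S$ together with the compatibility condition $\alpha_{\Hmod E}(\xi\cdot x)=\alpha_{\Hmod E}(\xi)\alpha_\X(x)$ from the definition of equivariant Hilbert module: one computes $\alpha_{\Hmod E}(\sigma(S)(\xi\otimes x)) = \alpha_{\Hmod E}(S(\xi))\alpha_\X(x) = (S\otimes\id)(\delta_u(\xi))\,\alpha_\X(x)$, which matches $(\sigma(S)\otimes\id)$ applied to the coaction on $\Hsp_u\otimes\Co{\X}$ after unwinding the leg-numbering. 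Then $\rho$ and $\sigma$ are mutually inverse: $\rho(\sigma(S))(\xi)=S(\xi)\cdot 1=S(\xi)$, and $\sigma(\rho(T))(\xi\otimes x)=T(\xi\otimes 1)\cdot x=T(\xi\otimes x)$ by right $\Co{\X}$-linearity of $T$; density of $\Hsp_u\otimes\Co{\X}$ in the algebraic sense (it is already a module, no completion needed on that side since $\Hsp_u$ is finite-dimensional) finishes it. Naturality in $u$ and in $\Hmod E$ is a routine diagram check that I would state but not belabour.

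The only genuinely delicate point is the boundedness/adjointability of $\sigma(S)$ and, dually, the automatic boundedness of $\rho$; both are handled cleanly by the finite-dimensionality of $\Hsp_u$, which lets one write everything in terms of a finite orthonormal basis and produce explicit adjoints, so there is no real analytic obstacle. Everything else is bookkeeping with the leg-numbering conventions for the coactions. I would therefore expect the proof to be short, with the ``hard part'' being merely the careful verification that the equivariance conditions on the two sides correspond under $\rho$ and $\sigma$ once the coaction on $\Hsp_u\otimes\Co{\X}$ is spelled out; this is where a sign or leg-numbering slip could creep in, but it is not conceptually deep.
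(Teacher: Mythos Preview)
Your proposal is correct and follows exactly the same approach as the paper: the paper defines the forward map by $T\mapsto (\xi\otimes x\mapsto T(\xi)x)$ and the inverse by pulling back along $\xi\mapsto \xi\otimes 1$, which are precisely your $\sigma$ and $\rho$. The paper's proof is even terser than yours and does not discuss boundedness or adjointability at all, since the statement as written only concerns linear $\G$-equivariant (and on the right, $\Co{\X}$-linear) maps; adjointability is noted separately, just after the lemma, as automatic for module maps between finitely generated projective modules.
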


\begin{proof}
If $T \in \Hom_\G(\Hsp_u, \Hmod{E})$, the map $\xi \otimes x \mapsto T(\xi)x$ from $\Hsp_u \otimes \Co{\X}$ to $\Hmod{E}$ is $\G$-equivariant and right $\Co{\X}$-linear.  On the other hand, the inverse correspondence is given by pulling back with the embedding $\Hsp_u \rightarrow \Hsp_u \otimes \Co{\X}, \xi \mapsto \xi \otimes 1$.
\end{proof}

The above isomorphism can be regarded as an adjunction between the `scalar extension by $\Co{\X}$' functor and the `scalar restriction' functor (forgetting the action of $\Co{\X}$).  Moreover, $\Co{\X}$ itself can be regarded as an irreducible object in the category $\cat{D}_\X$ by the ergodicity.  Hence, if $\Hmod{E}$ is a finite equivariant Hilbert module over $\Co{\X}$, we have for the right hand side of~\eqref{EqScExtResAdj} that \[ \Hom_{\G, \Co{\X}}(\Hsp_u \otimes \Co{\X}, \Hmod{E}) = \Mor(u\otimes\Co{\X},\Hmod{E}),\] the latter a morphism space in $\cat{D}_\X$. We use here implicitly that adjointability is automatic for $\Co{\X}$-module maps between finitely generated projective modules).

In the following, we use Notation \ref{NotBlock}.

\begin{Prop}\label{PropCatModToAlg}
Let $\bullet$ denote the object $\Co{\X}$ in $\cat{D}_{\X}$. Then the $\G$-$\cC^*$-algebra $A_{\bullet}^{\bullet}$ is equivariantly isomorphic to $\Co{\X}$.  This isomorphism is induced by the embedding
\begin{equation}\label{EqAdotdotCXisom}
\aA_{\bullet}^{\bullet} \rightarrow \Co{\X},\quad f\mapsto f^a(f_a\otimes 1).
\end{equation}
\end{Prop}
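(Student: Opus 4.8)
The plan is to verify directly that the map~\eqref{EqAdotdotCXisom} is a well-defined, equivariant, unital $^*$-algebra isomorphism onto a dense subalgebra of $\Co\X$, and then invoke the uniqueness of the $\cC^*$-completion (Proposition~\ref{LemComp}) to upgrade it to an isomorphism $A_\bullet^\bullet\cong\Co\X$. First I would unwind the definitions. Here $\bullet=\Co\X$ is viewed as an irreducible object of $\cat{D}_\X$ (irreducible by ergodicity, cf.\ Remark~\ref{RemHilb}), so by the discussion following Lemma~\ref{LemSpecFuncModMor} we have $\Mor(u_a\otimes\Co\X,\Co\X)\cong\Hom_\G(\Hsp_a,\Co\X)$, the space of $\G$-equivariant linear maps. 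Thus
\[
\aA_\bullet^\bullet=\bigoplus_{a\in I}\Mor(u_a\otimes\Co\X,\Co\X)\otimes\Hsp_a
\cong\bigoplus_{a\in I}\Hom_\G(\Hsp_a,\Co\X)\otimes\Hsp_a,
\]
and under this identification the prescription $f=f^a\otimes f_a\mapsto f^a(f_a\otimes 1)$ is exactly the map~\eqref{EqComdAsSpecFuncTimesForgetful} applied to the comodule $\Co\X$, i.e.\ the canonical evaluation $\bigoplus_a\Hom_\G(\Hsp_a,\Co\X)\otimes\Hsp_a\to\Co\X$. Since $\Co\X$ is an algebraic $\G$-action (its Hopf-algebraic core $\mathscr{C}(\X)$ of Lemma~\ref{LemAlgAct} decomposes as a direct sum of its spectral subspaces), this evaluation map is a linear isomorphism onto the dense subalgebra $\mathscr{C}(\X)\subseteq\Co\X$. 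So injectivity, density of the range, and equivariance (the comodule structure $\alpha_\bullet^\bullet=\oplus_a(\id\otimes\delta_a)$ matches $\alpha_\X$ on the corresponding spectral subspaces) are each essentially a bookkeeping check against the Peter--Weyl decomposition of $\Co\X$.

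The substantive part is checking that the evaluation map is multiplicative and $^*$-preserving, i.e.\ that the abstract product of Definition~\ref{DefPenEinNotComp} and the abstract $^*$ of Definition~\ref{DefStar}, transported along~\eqref{EqAdotdotCXisom}, reproduce the honest multiplication and adjoint of $\Co\X$. For multiplication: given $f\in\aA_\bullet^\bullet$ supported at $a$ and $g$ supported at $b$, the product $fg$ is built by composing $f^a(\id_a\otimes g^b)\phi_{a,b,\Co\X}$ with the isometries $u_c\hookrightarrow u_a\Circt u_b$ and recombining the $\Hsp$-legs; one has to see that, after evaluating via~\eqref{EqAdotdotCXisom}, the sum over $c$ and the isometries $\iota^c_{ab,k}$ telescopes (using the interchange law quoted in Section~\ref{SecTan} together with $\sum_{c,k}\iota^c_{ab,k}(\iota^c_{ab,k})^*=\id_{\Hsp_a\otimes\Hsp_b}$) to give $(f^a(f_a\otimes 1))\cdot(g^b(g_b\otimes 1))$, where the product on the right is taken in $\Co\X$ and where one uses right $\Co\X$-linearity of the morphisms $f^a,g^b$ (as in Lemma~\ref{LemSpecFuncModMor}) to move the scalar $f^a(f_a\otimes 1)\in\Co\X$ past $g^b$. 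For $^*$: transporting Definition~\ref{DefStar} along~\eqref{EqAdotdotCXisom} one must recognize $\overline{\ld{a}f}$ and $(\lu{a}f)^*$, combined via the Frobenius isomorphism (Lemma~\ref{LemModCatFrob}) and the conjugate equations, as implementing the $\cC^*$-algebra involution of $\Co\X$; here the equivariance of $\alpha_\X$ and the explicit dual of a representation from Example~\ref{ExaTenso}(2) identify the conjugate of a spectral component $\Hom_\G(\Hsp_a,\Co\X)$ with $\Hom_\G(\Hsp_{\bar a},\Co\X)$ in precisely the way the adjoint acts on matrix-coefficient expansions.

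Once~\eqref{EqAdotdotCXisom} is known to be an injective, equivariant, unital $^*$-homomorphism with dense range, the closing step is immediate: the target $\Co\X$ is a $\cC^*$-completion of $\aA_\bullet^\bullet$ to which the algebraic action extends, so by the uniqueness statement of Proposition~\ref{LemComp} it must coincide with the canonical completion $A_\bullet^\bullet$, and the extended map is the desired equivariant $\cC^*$-isomorphism $A_\bullet^\bullet\xrightarrow{\ \sim\ }\Co\X$. The main obstacle is the multiplicativity check: one has to carry the Penrose--Einstein notation through the associator $\phi_{a,b,\Co\X}$ and verify that, under evaluation, the coherence data of the module category degenerates — because $\cat{D}_\X$ is a \emph{strict} module category built from ordinary tensor products of Hilbert spaces and Hilbert $\cC^*$-modules, so that $\phi$ and $e$ are literally identity maps — to the point where the abstract formula collapses onto pointwise multiplication in $\Co\X$. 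Everything else is a routine, if notation-heavy, comparison with the Peter--Weyl decomposition.
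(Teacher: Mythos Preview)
Your proposal is correct and follows the same overall arc as the paper: identify $\aA_\bullet^\bullet$ with $\bigoplus_a \Hom_\G(\Hsp_a,\Co\X)\otimes\Hsp_a$ via Lemma~\ref{LemSpecFuncModMor}, recognize~\eqref{EqAdotdotCXisom} as the canonical evaluation~\eqref{EqComdAsSpecFuncTimesForgetful}, verify that it is an equivariant unital $^*$-algebra map onto the algebraic core of $\Co\X$, and then invoke the uniqueness of the $\cC^*$-completion (Proposition~\ref{LemComp}).

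The one substantive difference is in the multiplicativity step: the paper simply refers to \cite[Theorem~8.1]{Pin1}, where the product on the spectral-functor side has already been identified with the product of $\Co\X$, whereas you carry out the verification by hand, exploiting that the associator and unit constraints of $\cat{D}_\X$ are literal identities and that the completeness relation $\sum_{c,k}\iota^c_{ab,k}(\iota^c_{ab,k})^*=\id$ collapses the abstract formula to $f^a(f_a\otimes 1)\cdot g^b(g_b\otimes 1)$ via right $\Co\X$-linearity of $f^a$. Your version is thus self-contained (and also makes the invocation of Proposition~\ref{LemComp} explicit, which the paper leaves tacit), at the cost of the notation-heavy computation you anticipate; the paper's version is terser but imports the comparison from \cite{Pin1}.
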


\begin{proof}
By Lemma~\ref{LemSpecFuncModMor}, $\aA_\bullet^\bullet$ can be identified with $\oplus_a \Hom_\G(\mathscr{H}_a, \Co{\X}) \otimes \Hsp_a$, and the map~\eqref{EqAdotdotCXisom} is identified with the canonical embedding~\eqref{EqComdAsSpecFuncTimesForgetful}.  We obtain the assertion by comparing our product structure on $\aA_\bullet^\bullet$ with the one in~\cite[Theorem~8.1]{Pin1}.
\end{proof}

\begin{Prop}\label{PropModCatToCatMod}
Let $\mathcal{D}$ be a connected module $\cC^*$-category over $\Rep(\G)$. Let $y\in \mathcal{D}$ be an irreducible object, and write $A_y^y = \Co{\X_y}$. Then there is an equivalence of $\Rep(\G)$-module $\cC^*$-categories $\mathcal{D} \cong \mathcal{D}_{\X_y}$, by restricting the functor $F_y$ from Proposition \ref{LemDcatIntoAcat} to $\cat{D}$.
\end{Prop}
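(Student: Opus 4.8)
The plan is to show that the composite of $F_y$ with the inclusion $\cat{D}\hookrightarrow A$ is a fully faithful, essentially surjective $\cC^*$-functor $\cat{D}\to\cat{D}_{\X_y}$ carrying a coherent $\Rep(\G)$-module structure; the desired equivalence then follows. It lands in $\cat{D}_{\X_y}$ by Lemma~\ref{PropFinHilb}, which gives that each $A_x^y$ is a finite equivariant Hilbert $A_y^y=\Co{\X_y}$-module. Moreover, for any nonzero simple $X_r$ in $\cat{D}$, connectedness of $\cat{D}$ provides a representation $u$ with $\Mor(u_a\otimes y,X_r)\neq 0$ for some irreducible $u_a$ occurring in $u$, so $\aA_{X_r}^y\neq 0$ and hence $A_{X_r}^y\neq 0$; by Lemma~\ref{PropFinHilb} this forces $A_{X_r}^{X_r}\cong\mathcal{K}_{A_y^y}(A_{X_r}^y)$, and since $X_r$ is irreducible the $\G$-action on $A_{X_r}^{X_r}$ is ergodic, so $\mathcal{L}_\G(A_{X_r}^y)=(A_{X_r}^{X_r})^\G=\C$; that is, each $F_y(X_r)$ is an irreducible object of $\cat{D}_{\X_y}$.

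Next I would prove full faithfulness. Fix objects $x,z$ of $\cat{D}$. Applying Lemma~\ref{PropFinHilb} to $y$ and the object $x\oplus z$, together with the block decomposition of $A_{x\oplus z}^{x\oplus z}$ from Notation~\ref{NotBlock} (recall $A_{x\oplus z}^y=A_x^y\oplus A_z^y$), one identifies $\mathcal{K}_{A_y^y}(A_z^y,A_x^y)$ equivariantly with the off-diagonal block $A_x^z$, the diagonal blocks matching the compact endomorphisms as above. Since $A_z^y,A_x^y$ are finitely generated projective, $\Mor_{\cat{D}_{\X_y}}(A_z^y,A_x^y)=\mathcal{K}_{A_y^y}(A_z^y,A_x^y)^\G$, which by Lemma~\ref{LemInv} equals $(A_x^z)^\G=\Mor_{\cat{D}}(z,x)$; and by construction (the $F_y$-action being left multiplication) this identification is precisely the map induced by $F_y$. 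Hence $F_y|_{\cat{D}}$ is fully faithful; in particular the $F_y(X_r)$ are mutually non-isomorphic.

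The main work is to equip $F_y$ with a $\Rep(\G)$-module functor structure, i.e.\ natural unitaries $\psi_{u,x}\colon F_y(u\Circt x)\to u\Circt F_y(x)$. Here $F_y(u\Circt x)=A_{u\otimes x}^y$ is the completion of $\bigoplus_a\Mor(u_a\otimes y,u\otimes x)\otimes\Hsp_a$, while $u\Circt F_y(x)=\Hsp_u\otimes A_x^y$. Applying the Frobenius isomorphism of Lemma~\ref{LemModCatFrob} to move $u$ out of the target $\Mor(u_a\otimes y,u\otimes x)\cong\Mor((\bar u\Circt u_a)\otimes y,x)$, decomposing each $\bar u\Circt u_a$ into irreducibles, and using the canonical isomorphism~\eqref{EqHuDec} for $v=u\Circt u_b$, one obtains a natural $\G$-equivariant identification
\[
\bigoplus_a\Mor(u_a\otimes y,u\otimes x)\otimes\Hsp_a\;\cong\;\Hsp_u\otimes\bigoplus_b\Mor(u_b\otimes y,x)\otimes\Hsp_b,
\]
which one checks is isometric for the canonical $A_y^y$-valued inner products and hence extends to the required unitary $\psi_{u,x}$. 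Verifying that $\psi$ is natural in $x$ and satisfies the coherence diagrams~\eqref{EqCModHomFnctrTrivAct} and~\eqref{EqCModHomFnctr} of Definition~\ref{DefMorMod} is a diagram chase using the coherence of the associators $\phi$ in $\cat{D}$, the conjugate equations in $\Rep(\G)$, and the naturality of~\eqref{EqHuDec}. I expect this to be the main obstacle: the verification is routine step by step, but bookkeeping the Frobenius isomorphisms and the normalizations through the pentagon for $\psi$ is where the computational burden lies.

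Finally, for essential surjectivity, note $F_y(y)=A_y^y=\Co{\X_y}$ is the standard module. Since $\cat{D}_{\X_y}$ is connected and semi-simple, a standard $\cC^*$-category argument shows every irreducible object $\Hmod{E}$ of $\cat{D}_{\X_y}$ is a direct summand of $u\Circt\Co{\X_y}$ for some $u\in\Rep(\G)$; via $\psi$ this is $F_y(u\otimes y)$, and decomposing $u\otimes y\cong\bigoplus_r m_r X_r$ in $\cat{D}$ and using additivity of $F_y$ (Lemma~\ref{LemMat}) exhibits $\Hmod{E}$ as a summand of $\bigoplus_r m_r F_y(X_r)$; as the $F_y(X_r)$ are irreducible and mutually non-isomorphic, $\Hmod{E}\cong F_y(X_r)$ for some $r$. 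Thus $\{F_y(X_r)\}$ is a maximal family of mutually non-isomorphic irreducibles in $\cat{D}_{\X_y}$, so by Lemma~\ref{LemEquiii} the underlying functor $F_y|_{\cat{D}}$ is an equivalence of $\cC^*$-categories; combined with the coherent module structure $\psi$ this yields the equivalence of $\Rep(\G)$-module $\cC^*$-categories $\cat{D}\cong\cat{D}_{\X_y}$.
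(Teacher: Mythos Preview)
Your proposal is correct and follows essentially the same route as the paper: you use Lemma~\ref{PropFinHilb} for the range, the Frobenius isomorphism of Lemma~\ref{LemModCatFrob} to build the module-functor structure $\psi_{u,x}\colon A_{u\otimes x}^y\to\Hsp_u\otimes A_x^y$, the block decomposition of $A_{x\oplus z}^{x\oplus z}$ together with Lemma~\ref{LemInv} to see that non-isomorphic irreducibles go to non-isomorphic irreducibles, the fact that every object of $\cat{D}_{\X_y}$ embeds in some $u\Circt\Co{\X_y}$ for essential surjectivity, and Lemma~\ref{LemEquiii} to conclude. The only minor difference is that you prove full faithfulness directly before invoking Lemma~\ref{LemEquiii}, whereas the paper goes straight to verifying the hypothesis of that lemma (irreducibles to a maximal family of non-isomorphic irreducibles); your extra step is harmless but not needed once you appeal to Lemma~\ref{LemEquiii}.
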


\begin{proof}
First of all, Lemma \ref{PropFinHilb} ensures us that $F_y$ has the proper range on objects. Since $\cat{D}$ is realized inside the category $A$ by taking the $\G$-invariants in morphism spaces, the equivariance part of Proposition \ref{LemDcatIntoAcat} ensures that $F_y$ also has the proper range on morphisms. In the following, we will mean by $F_y$ its restriction to $\cat{D}$.

We next show that $F_y$ is a $\Rep(\G)$-module homomorphism. Let $u$ be a finite-dimensional representation of $\G$, and let $x$ be an object in $\cat{D}$.  Then, the spectral subspace functors associated with $A_{u \otimes x}^y$ and $u \otimes A_{x}^y$ are the same: the one for $A_{u \otimes x}^y$ is, by definition, determined by the spaces $(\Mor(u_a \otimes y, u \otimes x))_{a \in I}$, but the Frobenius isomorphism implies that these are equal to
\[
\Mor((\bar{u} \Circt u_a) \otimes y, x) \simeq \Mor(\bar{u} \Circt u_a, A_x^y) = \Mor(u_a, \Hsp_u \otimes A_x^y)
\]
for $a \in I$.  The resulting linear isomorphism $\aA_{u \otimes x}^y \rightarrow \Hsp_u \otimes \aA_x^y$ is by construction a $\G$-homomorphism. It is right $\mathscr{A}_y^y$-linear and isometric by the same type of calculation as in the previous section. The coherence conditions for $F_y$ follow from the naturality for scalar restriction/extension and from the fact that we can canonically take $\overline{u\Circt v} = \bar{v}\Circt \bar{u}$ using the chosen duality morphisms for $u$ and $v$.

It remains to show that the sets of irreducible classes are in bijection under the functor $F_y$.  By the connectedness of $\cat{D}$, for any object $x$, there exists an (irreducible) representation $u$ of $\G$ such that $\Mor(u\otimes y, x)\neq 0$. Hence $A_x^y$ is a non-zero Hilbert module. As in the proof of Lemma \ref{PropFinHilb}, it follows that $A_x^y$ is irreducible if $x$ is irreducible. If further $x$ and $z$ are irreducible, we must have by the same reasoning that the map \[\begin{pmatrix} A_x^x & A_x^z \\ A_z^x & A_z^z\end{pmatrix} \rightarrow \begin{pmatrix} \mathcal{K}(A^y_x) & \mathcal{K}(A^y_z,A^y_x)\\ \mathcal{K}(A^y_x,A^y_z) & \mathcal{K}(A^y_z)\end{pmatrix}\] is an isomorphism. Using Lemma~\ref{LemInv}, we see that if $x$ and $z$ are non-isomorphic irreducible objects, $A_x^y$ and $A_z^y$ are not equivalent in $\cat{D}_{\X_y}$.

Now, any object in $\cat{D}_{\X_y}$ is a subobject of $u\Circt \Co{\X_y}$ for some finite-dimensional representation $u$ of $\G$. As $F_y$ preserves the module structure, and as $\Co{\X_y}$ is the image of $y$ by construction, we find that any object of $\cat{D}_{\X_y}$ is isomorphic to an object in the image of $F_y$. By Lemma~\ref{LemEquiii}, we conclude that $F_y$ is an equivalence of $\Rep(\G)$-module $\cC^*$-categories.
\end{proof}

To conclude this section, we summarize our main result in the following theorem, which will also include the formalism on bi-graded Hilbert spaces developed in the Appendix.  Indeed, in our setup, abstract module $\cC^*$-categories will arise naturally from the study of quantum homogeneous spaces, and one then passes to the bi-graded Hilbert space picture to reveal the combinatorial structure in a more tangible form, cf. the remark after Theorem 1.5 in~\cite{Gro1}. This will be exploited in our forthcoming paper~\cite{DeC3} to classify the ergodic actions of the quantum $\SU_q(2)$ groups for $0<|q|\leq 1$.

\begin{Theorem}\label{TheoQHmgenModCatStTensorEndo}
Let $\G$ be a compact quantum group. There is a one-to-one correspondence between the following notions.
\begin{enumerate} \item Ergodic actions of $\G$ (modulo equivariant Morita equivalence).
\item Connected module $\cC^*$-categories over $\Rep(\G)$ (modulo module equivalence).
\item Connected strong tensor functors from $\Rep(\G)$ into bi-graded Hilbert spaces (modulo natural tensor equivalence).
\end{enumerate}
\end{Theorem}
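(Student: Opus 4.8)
The plan is to build the equivalence of (1) and (2) from two mutually inverse assignments, and then to import the equivalence of (2) and (3) from the appendix. To an ergodic action $\X$ one attaches the connected, semi-simple $\Rep(\G)$-module $\cC^*$-category $\cat{D}_\X$ of finite equivariant Hilbert modules, with the distinguished irreducible object $\Co{\X}$; conversely, to a connected semi-simple $\Rep(\G)$-module $\cC^*$-category $\mathcal{D}$ one picks an irreducible object $y$ and attaches the ergodic action on $A_y^y=\Co{\X_y}$ supplied by Theorem~\ref{ThmAlgActionFromModCat}, Proposition~\ref{LemComp} and Lemma~\ref{PropFinHilb}. At the level of these \emph{pointed} objects the two assignments are already mutually inverse: Proposition~\ref{PropCatModToAlg} gives $A_{\Co{\X}}^{\Co{\X}}\cong\Co{\X}$ equivariantly, and Proposition~\ref{PropModCatToCatMod} gives $\cat{D}_{\X_y}\cong\mathcal{D}$ as module $\cC^*$-categories.

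The real content of the theorem is that these assignments descend to the stated equivalence classes, i.e.\ that erasing the basepoint on the category side matches equivariant Morita equivalence on the action side. First I would show that the choice of the irreducible object $y$ only affects $\X_y$ up to equivariant Morita equivalence. Given another irreducible object $y'$, the space $A_{y'}^y$ is a finite equivariant Hilbert $A_y^y$-module by Lemma~\ref{PropFinHilb}; it is nonzero because $\mathcal{D}$ is connected (choose $u$ with $\Mor(u\otimes y,y')\neq 0$); it is irreducible by the same reasoning as in the proof of Proposition~\ref{PropModCatToCatMod} (using that $A_{y'}^{y'}$ is ergodic and $A_{y'}^y$ finite); and the block decomposition of $A_{y\oplus y'}$ (Notation~\ref{NotBlock}) identifies $A_{y'}^{y'}$ with $\mathcal{K}_{A_y^y}(A_{y'}^y)$. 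Hence $A_{y'}^y$ implements an equivariant Morita equivalence $\X_y\sim\X_{y'}$. Second, a module equivalence $(G,\psi)\colon\mathcal{D}\to\mathcal{D}'$ carries an irreducible $y$ to an irreducible $G(y)$, and since the graded vector space $\aA_y^y=\bigoplus_{a}\Mor(u_a\otimes y,y)\otimes\Hsp_a$, with its multiplication, $^*$-operation and $\PW(\G)$-coaction, is manufactured functorially out of the module-category data, $(G,\psi)$ induces an equivariant $^*$-isomorphism $\aA_y^y\cong\aA_{G(y)}^{G(y)}$; by the uniqueness statement in Proposition~\ref{LemComp} this promotes to $\X_y\cong\X_{G(y)}$. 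Thus $[\mathcal{D}]\mapsto[\X_y]$ is well defined on module-equivalence classes.

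In the reverse direction I would verify that equivariantly Morita equivalent actions have module-equivalent categories. If $\Co{\Y}\cong\mathcal{K}(\Hmod{E})$ for an irreducible equivariant Hilbert $\Co{\X}$-module $\Hmod{E}$, then $\Hmod{E}$ is full by Remark~\ref{RemHilb} and hence an equivariant $\Co{\Y}$-$\Co{\X}$-imprimitivity bimodule, so the internal tensor product $\Hmod{F}\mapsto\Hmod{F}\otimes_{\Co{\Y}}\Hmod{E}$ is an equivalence $\cat{D}_\Y\to\cat{D}_\X$ commuting with the $\Rep(\G)$-actions up to the evident associativity isomorphisms. (Equivalently: $\Hmod{E}$ is an irreducible object of $\cat{D}_\X$, the argument of Proposition~\ref{PropCatModToAlg} with basepoint $\Hmod{E}$ in place of $\Co{\X}$ gives $A_{\Hmod{E}}^{\Hmod{E}}\cong\Co{\Y}$, and Proposition~\ref{PropModCatToCatMod} yields $\cat{D}_\X\cong\cat{D}_{\X_{\Hmod{E}}}=\cat{D}_\Y$.) Combining the four well-definedness statements with Propositions~\ref{PropCatModToAlg} and~\ref{PropModCatToCatMod} gives the bijection of (1) and (2).

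Finally, the equivalence of (2) and (3) is the content of the appendix: a connected semi-simple $\Rep(\G)$-module $\cC^*$-category $\mathcal{D}$ based on an index set $J$ yields a strong tensor $\cC^*$-functor from $\Rep(\G)$ into $J\times J$-bi-graded Hilbert spaces via $u\mapsto(\Mor(X_r,u\otimes X_s))_{r,s\in J}$, with tensor structure coming from the associator $\phi$ and composition of morphisms, connectedness of the functor corresponding to connectedness of $\mathcal{D}$; this is inverse, up to natural tensor equivalence, to the module-category-from-tensor-functor construction of Examples~\ref{ExaQSubGrpEtc}(3), and it sends module equivalences to natural tensor equivalences. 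The main obstacle in the whole argument is the bookkeeping in the two well-definedness steps for $(1)\leftrightarrow(2)$ — pinning down that moving the basepoint within a connected module $\cC^*$-category costs \emph{exactly} an equivariant Morita equivalence and nothing more — since the genuine reconstruction work is already done in Propositions~\ref{PropCatModToAlg}, \ref{PropModCatToCatMod} and Lemma~\ref{PropFinHilb}.
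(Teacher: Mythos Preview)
Your proposal is correct and follows essentially the same route as the paper: the paper's proof also reduces $(1)\leftrightarrow(2)$ to Propositions~\ref{PropCatModToAlg} and~\ref{PropModCatToCatMod} together with the observation that the freedom in choosing the irreducible basepoint $y$ corresponds exactly to equivariant Morita equivalence, and handles $(2)\leftrightarrow(3)$ by invoking Proposition~\ref{PropCorrModTen}. You have simply spelled out in detail the well-definedness checks (moving the basepoint, transporting along a module equivalence, and passing from a Morita bimodule to a categorical equivalence) that the paper compresses into a single clause and a cross-reference.
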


The connectedness of a strong tensor functor $F$ into $J$-bi-graded Hilbert spaces means that it can can not be decomposed as a direct sum $F_1\oplus F_2$ with the $F_i$ strong tensor functors into $J_i$-bi-graded Hilbert spaces, $J = J_1\cup J_2$ with $J_1$ and $J_2$ disjoint.

\begin{proof}
The equivalence between the first two structures is a direct consequence of Propositions~\ref{PropCatModToAlg} and~\ref{PropModCatToCatMod}, where the arbitrariness of the choice of irreducible object corresponds precisely to equivariant Morita equivalence, cf. the remark above Notation \ref{DefCatFinEq}. The equivalence between the last two is a consequence of Proposition~\ref{PropCorrModTen}, under which the connectedness can be easily seen to be preserved.
\end{proof}

Let us give a little more detail on the direct correspondence between tensor functors and ergodic actions. Let $J$ be a set, and $(F_{r s})_{r, s \in J}$ be a connected strong tensor functor from $\Rep(\G)$ into column-finite $J$-bi-graded Hilbert spaces.  Then by Proposition \ref{PropCorrModTen}, $\Hf^J$ has a structure of $\Rep(\G)$-module $\cC^*$-category, in such a way that $F_{r s}(u) \cong \Mor(x_r,u\otimes x_s)$. Hence for $r,s$ elements of $J$, the spaces $\aA_{x_r}^{x_s}$ which were constructed in Section~\ref{SecTan} can be explicitly expressed as
\[
\aA_{x_r}^{x_s} = \bigoplus_{a\in I} \overline{F_{rs}(a)} \otimes \Hsp_a,
\]
since we can identify $\Mor(u\otimes x_s,x_r)$ with the conjugate Hilbert space of $\Mor(x_r,u\otimes x_s)$ by means of the adjoint map.

\section{Categorical description of equivariant maps}
\label{SecEquivarMap}

In this last section, we investigate the relationship between equivariant maps between quantum homogeneous spaces and equivariant functors between module C$^*$-categories.

Let $\X$ and $\Y$ be quantum homogeneous spaces over $\G$, respectively given by the coactions $\alpha \colon \Co{\X} \rightarrow \Co{\X} \otimes \Co{\G}$ and $\beta \colon \Co{\Y} \rightarrow \Co{\Y} \otimes \Co{\G}$.  A $\G$-morphism from $\Y$ to $\X$ is represented by a unital $^*$-algebra homomorphism $\theta$ from $\Co{\X}$ to $\Co{\Y}$ satisfying the $\G$-equivariance condition $(\theta \otimes \id) \circ \alpha = \beta \circ \theta$.

Given such a homomorphism $\theta$, we obtain a $^*$-preserving functor $\theta_\#\colon \cat{D}_\X \rightarrow \cat{D}_\Y$ defined as the extension of scalars $\Hmod{E} \mapsto \Hmod{E} \otimes_{\Co{\X}} \ld{\theta}{\Co{\Y}}$.  We may assume that this functor maps the distinguished object $\Co{\X}$ of $\cat{D}_\X$ to the one of $\cat{D}_\Y$, namely $\Co{\Y}$.  When $u \in \Rep(\G)$ and $\Hmod{E} \in \cat{D}_\X$, let $\psi_{\theta}$ denote the isomorphism
\[
 (\Hsp_u \otimes \Hmod{E}) \otimes_{\Co{\X}} \Co{\Y}\rightarrow \Hsp_u \otimes (\Hmod{E} \otimes_{\Co{\X}} \Co{\Y}), \quad (\xi \otimes x) \otimes y \mapsto \xi \otimes (x \otimes y).
\]
Then $\psi_{\theta}$ can be considered as a natural unitary transformation $\psi_{\theta}\colon \theta_\# ( -\otimes -) \rightarrow - \otimes (\theta_\# -)$ between functors from $\Rep(\G) \times \cat{D}_\X$ to $\cat{D}_\Y$.  This $\psi_{\theta}$ enables one to complete $\theta_\#$ to a module $\cC^*$-category homomorphism between $\cat{D}_\X$ and $\cat{D}_\Y$, cf. Definition \ref{DefMorMod}.

We aim to characterize the $\G$-equivariant morphisms of quantum homogeneous spaces in terms of their associated categories and functors between them.

\begin{Theorem}\label{ThmEquivHomFctor}
Let $\X$ and $\Y$ be quantum homogeneous spaces over $\G$.  Let $(G, \psi)$ be a $\Rep(\G)$-module homomorphism from $\cat{D}_\X$ to $\cat{D}_\Y$ satisfying $G(\Co{\X}) = \Co{\Y}$.  Then there exists a $\G$-equivariant $^*$-homomorphism $\theta$ from $\Co{\X}$ to $\Co{\Y}$ such that $\theta_\#$ is naturally isomorphic to $G$.

Furthermore, two $\Rep(\G)$-module homomorphisms $(G,\psi)$ and $(G,\psi')$ with the same underlying functor give rise to the same homomorphism $\theta$ if and only if $\psi$ and $\psi'$ are conjugate by a unitary self-equivalence of $G$.
\end{Theorem}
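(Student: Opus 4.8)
The plan is to push the reconstruction of Section~\ref{SecTanKr} through the module homomorphism $(G,\psi)$ to obtain $\theta$, and then to recover $(G,\psi)$ from $\theta$ up to module equivalence. Write $\bullet_\X$ and $\bullet_\Y$ for the distinguished objects $\Co{\X}\in\cat{D}_\X$ and $\Co{\Y}\in\cat{D}_\Y$; by Proposition~\ref{PropCatModToAlg} the completion $A_{\bullet_\X}^{\bullet_\X}$ of $\aA_{\bullet_\X}^{\bullet_\X}=\bigoplus_{a\in I}\Mor(u_a\Circt\bullet_\X,\bullet_\X)\otimes\Hsp_a$ is equivariantly isomorphic to $\Co{\X}$, and similarly over $\Y$. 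Given $(G,\psi)$ with $G(\bullet_\X)=\bullet_\Y$, I would first define a linear map $T\colon\aA_{\bullet_\X}^{\bullet_\X}\to\aA_{\bullet_\Y}^{\bullet_\Y}$ on the $a$-component by $f^a\otimes f_a\mapsto\bigl(G(f^a)\circ\psi_{u_a,\bullet_\X}^{-1}\bigr)\otimes f_a$, using $G(\bullet_\X)=\bullet_\Y$ and the unitary $\psi_{u_a,\bullet_\X}\colon G(u_a\Circt\bullet_\X)\to u_a\Circt\bullet_\Y$. Since $T$ acts as the identity on the $\Hsp_a$-legs it is automatically $\G$-equivariant. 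The substantive point is that $T$ is a unital $^*$-algebra homomorphism for the structures of Definitions~\ref{DefPenEinNotComp} and~\ref{DefStar}: multiplicativity is a diagram chase using the associativity coherence~\eqref{EqCModHomFnctr} of $(G,\psi)$ together with naturality of $\psi$ and functoriality of $G$; preservation of $^*$ holds because $G$ is a $\cC^*$-functor and $\psi$, being part of a module homomorphism, is compatible with the Frobenius isomorphisms of Lemma~\ref{LemModCatFrob} built from the fixed duality data of $\Rep(\G)$; and $T$ sends the unit $e_{\bullet_\X}\otimes1$ to $e_{\bullet_\Y}\otimes1$ by the triangle coherence~\eqref{EqCModHomFnctrTrivAct}. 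By Propositions~\ref{PropComp2} and~\ref{PropCatModToAlg}, $T$ completes uniquely to a $\G$-equivariant $^*$-homomorphism $\theta\colon\Co{\X}\to\Co{\Y}$.

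To identify $\theta_\#$ with $G$, I would check, straight from the definition of $\theta_\#$ as extension of scalars along $\theta$, that $\theta_\#(\bullet_\X)=\bullet_\Y$ and that $(\theta_\#,\psi_\theta)$ induces on $\aA_{\bullet_\X}^{\bullet_\X}$ the same map $T$: under the identification $\Mor(u_a\Circt\bullet_\X,\bullet_\X)\cong\Hom_\G(\Hsp_{u_a},\Co{\X})$ of Lemma~\ref{LemSpecFuncModMor}, both $T$ and the map induced by $\theta_\#$ become $S\mapsto\theta\circ S$. I would then set $\nu_{u\Circt\bullet_\X}=\psi_{\theta,u,\bullet_\X}^{-1}\circ\psi_{u,\bullet_\X}\colon G(u\Circt\bullet_\X)\to\theta_\#(u\Circt\bullet_\X)$, a unitary. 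Naturality of $\nu$ on morphisms between objects of the form $u\Circt\bullet_\X$ reduces, via the Frobenius isomorphism, to the fact that $G$- and $\theta_\#$-transport agree on each $\Mor(w\Circt\bullet_\X,\bullet_\X)$ (both being $T$); compatibility of $\nu$ with $\psi$ and $\psi_\theta$ is a coherence check. Since every object of $\cat{D}_\X$ is a subobject of some $u\Circt\bullet_\X$ by Lemma~\ref{LemEqvrHModStblz} and idempotents split, $\nu$ extends uniquely to a unitary module natural isomorphism $G\cong\theta_\#$.

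For the second assertion, suppose $(G,\psi)$ and $(G,\psi')$ yield the same $\theta$. By the previous step there are unitary module natural isomorphisms $\nu\colon(G,\psi)\to(\theta_\#,\psi_\theta)$ and $\nu'\colon(G,\psi')\to(\theta_\#,\psi_\theta)$, and $\eta:=\nu'^{-1}\circ\nu$ is a unitary self-equivalence of $G$. Writing the module-transformation identities $(\id_u\otimes\nu_{\Hmod{E}})\,\psi_{u,\Hmod{E}}=\psi_{\theta,u,\Hmod{E}}\,\nu_{u\Circt\Hmod{E}}$ and its analogue for $(\nu',\psi')$, and eliminating $\psi_\theta$, one obtains exactly $\psi'_{u,\Hmod{E}}=(\id_u\otimes\eta_{\Hmod{E}})\,\psi_{u,\Hmod{E}}\,\eta_{u\Circt\Hmod{E}}^{-1}$, so $\psi$ and $\psi'$ are conjugate by $\eta$. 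Conversely, if such an $\eta$ exists, then $\eta_{\bullet_\X}$ is a scalar because $\bullet_\X$ is irreducible, and combining $\psi'=(\id\otimes\eta)\,\psi\,\eta^{-1}$ with naturality of $\eta$ at the morphisms $f^a$ shows that the maps $T,T'$ induced on $\aA_{\bullet_\X}^{\bullet_\X}$ by $(G,\psi)$ and $(G,\psi')$ coincide---the scalar $\eta_{\bullet_\X}$ cancels---whence $\theta=\theta'$ by uniqueness of the completion.

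I expect the main obstacle to be the coherence bookkeeping: verifying that $T$ respects the twisted product and the twisted $^*$-operation on the algebras $\aA_\bullet^\bullet$, and that the isomorphism $G\cong\theta_\#$ is a genuine \emph{module} natural isomorphism and may be chosen unitary---it is precisely this module-compatibility that powers the proof of the second assertion. The remainder is routine manipulation with the reconstruction machinery of Sections~\ref{SecTan} and~\ref{SecTanKr}.
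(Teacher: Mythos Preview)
Your proposal is correct and follows essentially the same route as the paper's proof. You define $\theta$ on the spectral algebra $\aA_{\bullet_\X}^{\bullet_\X}$ by $f^a\otimes f_a\mapsto G(f^a)\psi_{a,\bullet_\X}^{-1}\otimes f_a$, which is exactly the paper's map $\Psi_a\otimes\id_a$; multiplicativity, unitality and $^*$-compatibility are argued from the same coherence identities~\eqref{EqCModHomFnctr},~\eqref{EqCModHomFnctrTrivAct} and naturality of $\psi$, and the extension to $\Co{\X}\to\Co{\Y}$ uses Proposition~\ref{PropComp2} just as in the paper. The only cosmetic difference is in building the natural isomorphism $G\cong\theta_\#$: the paper writes down an explicit formula $n_{\Hmod{E}}(f,g)=\Psi_a^{\Hmod{E}}(f^a)(f_a\otimes g)$ on dense subspaces of every object and then checks unitarity starting from $\Co{\X}$ and propagating via $\Rep(\G)$-equivariance, whereas you define $\nu$ directly on objects of the form $u\Circt\bullet_\X$ as $\psi_{\theta}^{-1}\circ\psi$ and extend by splitting idempotents. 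These are two packagings of the same construction, and your reduction of naturality to the agreement of $G$- and $\theta_\#$-transport on $\Mor(w\Circt\bullet_\X,\bullet_\X)$ via Frobenius is precisely what underlies the paper's verification that $n$ intertwines $\psi$ and $\psi_\theta$. Your treatment of the second assertion, including the observation that $\eta_{\bullet_\X}$ is scalar so the two induced maps $T,T'$ coincide, is likewise the paper's argument spelled out a bit more explicitly.
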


\begin{proof}
By Proposition~\ref{PropCatModToAlg}, we know that $\Co{\X}$ can be identified with a completion of the space $\aA = \oplus_{a\in I} \Mor(\Hsp_a\otimes \Co{\X},\Co{\X}) \otimes \Hsp_a$, and similarly for $\Co{\Y}$ as a completion of the space $\mathscr{B} = \oplus_{a\in I}\Mor(\Hsp_a\otimes \Co{\Y},\Co{\Y})$.
For any $u \in \Rep(\G)$, the action of $G$ and $\psi_{u, \Co{\X}}^{*}$ induces a linear map
\[
\Psi_u^{\Hmod{E}} \colon \Mor \left (\Hsp_u \otimes \Co{\X}, \Hmod{E} \right ) \rightarrow \Mor\left (\Hsp_u \otimes \Co{\Y}, G\Hmod{E} \right),
\]
sending $f$ to $G(f)\psi_{u,\Co{\X}}^*$. When $\Hmod{E} = \Co{\X}$, we write $\Psi_u^{\Co{\X}} = \Psi_u$, and we put $\theta = \oplus_{a \in I} \Psi_a\otimes \id_a$ as a map from $\aA$ to $\mathscr{B}$.

We first want to show that this is an algebra homomorphism. Let $f$ and $g$ be elements of $\aA$. The effect of $\theta$ on $f g$ can be expressed, using the notation from Definition~\ref{DefPenEinNotComp}, as
\begin{equation}\label{EqFonGH}
(\theta(f g))^c\otimes (\theta(f g))_c =  \lbrack G \left( f^a (\id_a \otimes g^b)((f_a \otimes g_b)^c\otimes \id_{\Co{\X}})\right)\psi_{c,\Co{\X}}^* \rbrack \otimes (f_a \otimes g_b)_c
\end{equation}
where we have dropped the associativity constraint for the module category since the latter is concrete.

By functoriality of $G$, naturality of $\psi$ and coherence of $\psi$, the morphism part in the left leg of the above formula can be written as
\[
G(f^a)\psi_{a,\Co{\X}}^*(\id_a\otimes G(g^b))(\id_a\otimes \psi_{b,\Co{\X}}^*)((f_a \otimes g_b)^c\otimes \id_{\Co{\Y}}),
\]
which can be simplified to $\Psi_a(f^a)(\id_a\otimes\Psi_b(g^b))((f_a \otimes g_b)^c\otimes \id_{\Co{\Y}})$. Since we can write $\theta(f)=\Psi_a(f^a)\otimes f_a$, we conclude that indeed $\theta(f g) = \theta(f) \theta(g)$. In the same way, the unitality of $\theta$ is proven.

Next, let us observe that $\theta$ is compatible with the involution on both algebras.  This is a consequence of the facts that $G$ `commutes' with the morphisms in $R$ and intertwines the $^*$-operations on $\cat{D}_\X$ and $\cat{D}_\Y$, and of naturality of $\psi$. Since $\theta$ is equivariant by construction, it then follows from Proposition \ref{PropComp2} that $\theta$ can be extended uniquely to an equivariant $^*$-homomorphism from $\Co{\X}$ to $\Co{\Y}$, which we denote by the same symbol.

Finally, we have to prove that $\theta_\#$ and $G$ are equivalent.  Let $\mathcal{E}$ be an object of $\cat{D}_\X$, and write $\aA_{\Hmod{E}} = \oplus_{a\in I} \Mor(u_a\otimes \Co{\X},\Hmod{E})\otimes \Hsp_a$, which we know can be identified with a dense subset of $\Hmod{E}$. Similar notation will be used for $\mathscr{B}$. Then for $f \in \aA_{\Hmod{E}}$ and $g \in \mathscr{B}$, we can define an element $n_{\Hmod{E}}(f, g)$ in $\mathscr{B}_{G\Hmod{E}}$
by
\begin{align*}
n_{\Hmod{E}}(f, g) &=\Psi_a^{\Hmod{E}}(f^a)(f_a\otimes g) \\ &= \lbrack \Psi_a^{\Hmod{E}}(f^a) (\id_a \otimes g^b)((f_a \otimes g_b)^c\otimes \id_{\Co{\Y}}) \rbrack \otimes (f_a \otimes g_b)_c.
\end{align*}

This will give a linear map $n_{\Hmod{E}}$ from the algebraic tensor product $\aA_{\Hmod{E}} \otimes \mathscr{B}$ to $\mathscr{B}_{G\Hmod{E}}$. By construction, it extends to the canonical isomorphism $\theta_\# \Co{\X} \simeq \Co{\Y} = G\Co{\X}$ at the object $\Co{\X}$. Using $\Rep(\G)$-equivariance, it then follows that $n_{\Hmod{E}}$ also extends to a unitary from $\theta_\#(\Hmod{E})$ to $G(\Hmod{E})$ for $\Hmod{E}$ of the form $u\Circt \Co{\X}$ for some representation $u$ of $\G$. By the connectedness of $\cat{D}_\X$ and linearity, we deduce that this holds for arbitrary $\Hmod{E}$. Hence $n_{\Hmod{E}}$ induces a natural unitary transformation $n\colon \theta_\# \Hmod{E} \rightarrow G\Hmod{E}$.

The way in which $n$ is constructed shows that the canonical $\psi_{\theta}$ is interchanged with $\psi$, i.e.
\[(\id\otimes \eta_{\Hmod{E}})\circ(\psi_{\theta})_{u,\Hmod{E}} = \psi_{u,\Hmod{E}}\circ \eta_{u\otimes \Hmod{E}}.\]
Indeed, taking $\xi\in \Hsp_u, f\in \mathscr{A}_{\Hmod{E}}$ and $g\in \Co{\Y}$, we have that \[\eta_{u\otimes \Hmod{E}}((\xi\otimes f)\otimes g) = G((\xi\otimes f)^a)\psi_{a,\Co{\X}}^*((\xi\otimes f)_a\otimes g).\] On the other hand,
\begin{align*}
\psi_{u,\Hmod{E}}^*(\id\otimes \eta_{\Hmod{E}})(\xi\otimes (f\otimes g)) &= \psi_{u,\Hmod{E}}^* \lbrack \xi\otimes G(f^c)\psi_{c,\Co{\X}}^*(f_c\otimes g)\rbrack \\
&= G(\id_u\otimes f^c)\psi_{u,c\otimes \Co{\X}}^*(1\otimes \psi_{c,\Co{\X}}^*)(\xi\otimes f_c\otimes g) \\
&= G(\id_u\otimes f^c)\psi_{u\otimes c,\Co{\X}}^*(\xi\otimes f_c\otimes g) \\
&= G(\id_u\otimes f^c)\psi_{u\otimes c,\Co{\X}}^*((\xi\otimes f_c)^a(\xi\otimes f_c)_a\otimes g) \\
&= G((\id_u\otimes f^c)((\xi\otimes f_c)^a\otimes \id_{\Co{\X}}))\psi_{a,\Co{\X}}^*((\xi\otimes f_c)_a\otimes g),
\end{align*}
which then reduces to the expression above.

It follows that if we have a different $\psi'$ which leads to the same $\theta$, we can construct by means of the two $n$-maps for $\psi$ and $\psi'$ a unitary self-equivalence of $G$ which conjugates $\psi$ and $\psi'$. Conversely, if $\mu$ is a natural unitary equivalence from $G$ to itself, the $\mu$-conjugated natural transformation \[\psi^\mu = (\id_u \otimes \mu_{\Co{\X}}) \psi \mu_{u \otimes \Co{\X}}^*\colon G(u \otimes \Co{\X}) \rightarrow u \otimes \Co{\Y}\] gives the same map $\Mor(u \otimes \Co{\X}, \Co{\X}) \rightarrow \Mor(u \otimes \Co{\Y}, \Co{\Y})$ as the one induced by $\psi$.
\end{proof}

\begin{Exa}\label{ExaQSubGrpHomogenEqvHoms}
Let $\mathbb{K} < \mathbb{H}$ be an inclusion of quantum subgroups of $\G$.  Then, the restriction functor $\Rep(\mathbb{H}) \rightarrow \Rep(\mathbb{K})$ is a $\Rep(\G)$-module homomorphism, and maps the trivial representation of $\mathbb{H}$ to the one of $\mathbb{K}$.  The induced $\G$-equivariant homomorphism $\CoL{\mathbb{H}\backslash\G} \rightarrow \CoL{\mathbb{K}\backslash\G}$ is the canonical inclusion of fixed point subalgebras for the respective left translation actions.
\end{Exa}

We now want to interpret Theorem \ref{ThmEquivHomFctor} in the context of bi-graded Hilbert spaces. We keep $\X$ and $\Y$ fixed quantum homogeneous spaces for $\G$. In the following,  we let $J$ (resp. $J'$) be an index set of the irreducible objects in $\cat{D}_\X$ (resp. $\cat{D}_\Y$). We denote the index corresponding to $\Co{\X}$ (resp. $\Co{\Y})$ by $\bullet$ (resp. $*$).  The $J \times J$-graded (resp. $J' \times J'$-graded) Hilbert space associated with the action of $u \in \cat{C}$ on $\cat{D}_\X$ (resp. $\cat{D}_\Y$) is denoted by $(F^\X_{r s}(u))_{r, s \in J}$ (resp. $(F^\Y_{p q}(v))_{p, q \in J'}$), and the corresponding unitaries by \[\phi_{rs,u,v}^{\cdot}\colon F_{rs}^{\cdot}(u\otimes v) \rightarrow \oplus_t F_{rt}^{\cdot}(u)\otimes F_{ts}^{\cdot}(v).\] Then if $\theta\colon \Co{\X}\rightarrow \Co{\Y}$ is an equivariant $^*$-homomorphism, we have the $J' \times J$-graded Hilbert space $\oplus_{p,r} F_{pr}$ associated with $\theta_\#$, where $F_{pr} = \Mor(x_p, \theta_\# x_r))$ for $p \in J', r \in J$, cf. Section \ref{SubSecFunctrNatTrasf}.

From Theorem \ref{ThmEquivHomFctor}, we then obtain the following corollary.

\begin{Cor}\label{CorEqvHomGrFunctrPict}
Let $\X$ and $\Y$ be quantum homogeneous spaces for $\G$.  The equivariant homomorphism from $\Co{\X}$ to $\Co{\Y}$ are in one-to-one correspondence with the classes of families of Hilbert spaces $F_{pr}$, $p\in J'$ and $r\in J$, and unitary maps
\[
\psi^u_{p r}\colon \bigoplus_{s \in J} F_{p s} \otimes F^\X_{s r}(u) \rightarrow \bigoplus_{q \in J'} F^\Y_{p q}(u) \otimes F_{q r}
\]
for $u \in \Rep(\G)$, $r \in J$, and $p \in J'$, such that $\psi^o_{p, r}$ is $\delta_{p,r}$ times the identity, $F_{p,\bullet} = \delta_{p,*}$, the diagrams
\[
\xymatrix@C=4em{
\oplus_{s} F_{p s} \otimes F^\X_{s r}(u) \ar[r]^{\psi^u_{p r}} \ar[d]_{\oplus_s \id \otimes F^\X_{s r}(T)} & \oplus_{q} F^\Y_{p q}(u) \otimes F_{q r} \ar[d]^{\oplus_q F^\Y_{p q}(T) \otimes \id}  \\
\oplus_{s} F_{p s} \otimes F^\X_{s r}(v) \ar[r]_{\psi^v_{p r}} &  \oplus_{q} F^\Y_{p q}(v) \otimes F_{q r}
}
\]
are commutative for any $T \in \Mor(u, v)$, and \[
\xymatrix@C=2em{
& \oplus_{q,t} F_{pq}^{\Y}(u)\otimes F_{qt} \otimes F_{tr}^{\X}(v)  \ar[rd]^{\oplus_q \id_{pq} \otimes \psi^v_{qr}}&  \\
\oplus_{s,t} F_{p s} \otimes F^\X_{s t}(u) \otimes F^\X_{tr}(v) \ar[ru]^{\oplus_t \psi^u_{p t}\otimes \id_{tr}} \ar[d]_{\oplus_s \id_{ps}\otimes \phi_{sr,u,v}^{\X}} &&  \oplus_{q,w} F^\Y_{p q}(u)\otimes F^\Y_{q w}(v) \otimes F_{w r}\ar[d]_{\oplus_w \phi_{pw,u,v}^{\Y}} \\ \oplus_{s} F_{p s} \otimes F^\X_{s r}(u\otimes v)  \ar[rr]_{\psi^{u\otimes v}_{pr}} &&\oplus_{w} F^\Y_{p w}(u\otimes v) \otimes F_{w r}
}
\]
is commutative.

Here two families $(F_{pr},\psi^u_{qt})$ and $(G_{pr},\mu^u_{qt})$ belong to the same class if and only if there are unitaries $U_{rs}\colon F_{pr}\rightarrow G_{pr}$ such that \[(\oplus_w (\id_{qw}\otimes U_{ws}))\psi_{qt}^{u} = \mu_{qt}^u(\oplus_s (U_{qs}\otimes \id_{st}))\] for all $q\in J'$, $t\in J$ and $u\in \Rep(\G)$.
\end{Cor}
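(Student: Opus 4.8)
The plan is to translate Theorem~\ref{ThmEquivHomFctor} through the dictionary of Proposition~\ref{PropCorrModTen}, which identifies $\Rep(\G)$-module $\cC^*$-categories on $\Hf^J$ with connected strong tensor functors $(F^{\X}_{rs})$, and likewise module homomorphisms with the graded data appearing in the statement. Concretely, the equivalence $\cat{D}_\X\simeq\Hf^J$ sends $u\otimes x_s$ to the object whose $r$-th component is $F^{\X}_{rs}(u)$, so that $\Mor(x_r,u\otimes x_s)\cong F^{\X}_{rs}(u)$, and similarly for $\Y$ with the index set $J'$. Under this identification a functor $G\colon\cat{D}_\X\to\cat{D}_\Y$ with $G(\Co{\X})=\Co{\Y}$ is, up to equivalence, determined on objects by the Hilbert spaces $F_{pr}=\Mor(x_p,G x_r)$ — here I would invoke Lemma~\ref{LemEquiii} and the semisimplicity of $\cat{D}_\Y$ to see that $G$ is recovered from this bigraded family, together with the natural transformation $\psi$, which in the graded picture is precisely the unitary $\psi^u_{pr}\colon\bigoplus_s F_{ps}\otimes F^{\X}_{sr}(u)\to\bigoplus_q F^{\Y}_{pq}(u)\otimes F_{qr}$ obtained by applying $G$ and composing with $\psi$. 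I would check that the coherence diagrams~\eqref{EqCModHomFnctrTrivAct} and~\eqref{EqCModHomFnctr} for $(G,\psi)$ become, under this translation, exactly the two commuting diagrams in the statement, with the normalizations $\psi^o_{pr}=\delta_{p,r}\id$ and $F_{p,\bullet}=\delta_{p,*}$ encoding $G(\Co{\X})=\Co{\Y}$ and the compatibility with the unit $e$.

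With this dictionary in place, the corollary is essentially the composite of two bijections. First, Theorem~\ref{ThmEquivHomFctor} gives a bijection between $\G$-equivariant $^*$-homomorphisms $\theta\colon\Co{\X}\to\Co{\Y}$ and $\Rep(\G)$-module homomorphisms $(G,\psi)$ with $G(\Co{\X})=\Co{\Y}$, modulo conjugating $\psi$ by a unitary self-equivalence of $G$. Second, the translation above converts the pair $(G,\psi)$ (up to natural isomorphism of $G$) into the family $(F_{pr},\psi^u_{pr})$, and converts "conjugation of $\psi$ by a unitary self-equivalence $\mu$ of $G$" into "replacing $(F_{pr},\psi^u_{pr})$ by $(G_{pr},\mu^u_{pr})$ via unitaries $U_{rs}\colon F_{pr}\to G_{pr}$ satisfying the intertwining relation $(\oplus_w(\id_{qw}\otimes U_{ws}))\psi^u_{qt}=\mu^u_{qt}(\oplus_s(U_{qs}\otimes\id_{st}))$". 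I would spell this out by noting that a natural unitary $\mu\colon G\to G'$ of underlying functors is, in the graded picture, exactly a family of unitaries $U_{pr}\colon \Mor(x_p,Gx_r)\to\Mor(x_p,G'x_r)$, and that naturality against the module structure forces precisely the displayed relation; a self-equivalence is the case $G=G'$. Combining the two bijections and quotienting on both sides by these equivalences yields the asserted one-to-one correspondence.

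The main obstacle — really the only nontrivial point — is verifying that the two coherence pentagons in the statement are genuinely equivalent to diagrams~\eqref{EqCModHomFnctrTrivAct} and~\eqref{EqCModHomFnctr}, and in particular that no extra condition is lost or gained when one passes from the category $\Hf^J$ with its (strict, concrete) associativity $\phi$ to the abstract module category: one must track the associators $\phi^{\X}_{sr,u,v}$ and $\phi^{\Y}_{pw,u,v}$ of the two module structures and confirm that the big hexagon in the statement is exactly the image of~\eqref{EqCModHomFnctr} under $\Mor(x_p,-)$, after using the natural isomorphisms $\Mor(x_p,u\otimes(v\otimes x_r))\cong\bigoplus_{q,t}F^{\Y}_{pq}(u)\otimes F_{qt}\otimes\cdots$ to split every $\Hom$-space. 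This is a finite diagram chase combining functoriality of $G$, naturality of $\psi$, and the splitting isomorphisms~\eqref{EqHuDec}; once it is done, everything else is the bookkeeping of transporting the equivalence relation through the two bijections, which is routine. I would therefore present the proof as: (i) recall the graded dictionary from Proposition~\ref{PropCorrModTen}; (ii) rephrase $(G,\psi)$ and its self-equivalences in terms of $(F_{pr},\psi^u_{pr})$ and the $U_{rs}$, checking the coherence translation; (iii) quote Theorem~\ref{ThmEquivHomFctor} and conclude.
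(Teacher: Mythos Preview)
Your proposal is correct and matches the paper's approach: the paper states this result as a direct corollary of Theorem~\ref{ThmEquivHomFctor} with no further proof, the understood translation being exactly the one you outline via the bi-graded dictionary of the Appendix (Propositions~\ref{PropFunctSSGrHsp}, \ref{PropNatTransSSGrTrans}, and \ref{PropCorrModTen}). Your identification of the one genuine verification --- that the hexagon in the statement is the image of diagram~\eqref{EqCModHomFnctr} under $\Mor(x_p,-)$ --- is apt, and the rest is indeed bookkeeping.
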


In practice, one only needs to verify the above assumptions for all irreducible $u$ (in which case the naturality condition simplifies), or for tensor products of a $\otimes$-generating object (in which case the constraint condition simplifies). Moreover, the fact that the above Hilbert spaces are often one-dimensional in special cases makes the problem of determining the possible $\psi$ more tractable.

Important invariants of $(\X, \Y,\theta)$ are the families of integer-valued matrices $(\dim F^\X_{r s}(u_a))_{r s}$ and $(\dim F^\Y_{p q}(u_a))_{p q}$ for $a \in I$, and $(\dim(F_{p r}))_{pr}$.  These are multiplicity matrices as considered in~\cite{Was2} and~\cite{Tom1} (see Remark~\ref{RemEqvKgrp}).

Let us examine them more closely in the particular case when the larger algebra $\Co{\Y}$ is of full quantum multiplicity~\cite{BDV1,DVV1}.  This is the case if and only if $\cat{D}_\Y$ is based on a singleton $\{y\}$.  Thus, the functor $\theta_\#\colon \cat{D}_\X \rightarrow \cat{D}_\Y$ itself can be classified among the $\cC^*$-functors by the dimension of the vector spaces $F_{r} = \Mor(\theta_\#(x_r), y)$ for $r \in J$.  The next result is useful in determining the coideals inside the full quantum multiplicity ones even when there is no trace, c.f.~\cite[Corollary~4.21]{Tom1}.

\begin{Prop}\label{PropFullQMultEmbEigen}
Let $\X$ and $\Y$ be quantum homogeneous spaces over $\G$.  Assume that $\Co{\Y}$ is of full quantum multiplicity, and that there is a $\G$-equivariant homomorphism $\theta$ from $\Co{\X}$ to $\Co{\Y}$.  Then, for any $u \in \Rep(\G)$, the matrix $(\dim F^{\X}_{r s}(u))_{r, s \in J}$ has an integer-valued eigenvector for the eigenvalue $\dim F^{\Y}(u)$.
\end{Prop}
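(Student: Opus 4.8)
The plan is to feed $\theta$ through the extension-of-scalars construction and then extract a numerical identity by counting multiplicities in the semi-simple category $\cat D_\Y$. Given the equivariant $^*$-homomorphism $\theta\colon\Co{\X}\to\Co{\Y}$, we have at our disposal the functor $\theta_\#\colon\cat D_\X\to\cat D_\Y$, $\Hmod E\mapsto\Hmod E\otimes_{\Co{\X}}\Co{\Y}$ (with $\Co{\Y}$ viewed as a $\Co{\X}$-bimodule via $\theta$), together with the canonical natural unitaries $\psi_{u,\Hmod E}\colon\theta_\#(u\Circt\Hmod E)\to u\Circt\theta_\#(\Hmod E)$ making $(\theta_\#,\psi)$ a $\Rep(\G)$-module homomorphism, and with $\theta_\#(\Co{\X})=\Co{\Y}$. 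Since $\Co{\Y}$ is of full quantum multiplicity, $\cat D_\Y$ is based on a singleton $\{y\}$: hence $\Co{\Y}\cong y$, and every object of $\cat D_\Y$ is a finite multiple of $y$. In particular, writing $F_r=\Mor(\theta_\#(x_r),y)$ as in the discussion preceding the statement and $v_r=\dim F_r=\dim\Mor(y,\theta_\#(x_r))$, we get $\theta_\#(x_r)\cong v_r\cdot y$ for each $r\in J$, and the vector $v=(v_r)_{r\in J}$ has nonnegative integer entries with $v_\bullet=\dim\Mor(\Co{\Y},y)=1$, so $v\neq0$.

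First I would apply $\theta_\#$ to the decomposition $u\Circt x_r\cong\bigoplus_{s\in J}F^\X_{sr}(u)\otimes x_s$ in $\cat D_\X$, in which the multiplicity space is $F^\X_{sr}(u)=\Mor(x_s,u\Circt x_r)$; this yields $\theta_\#(u\Circt x_r)\cong\bigoplus_s F^\X_{sr}(u)\otimes\theta_\#(x_s)\cong\bigl(\sum_s v_s\dim F^\X_{sr}(u)\bigr)\cdot y$. On the other hand $u\Circt\theta_\#(x_r)\cong v_r\cdot(u\Circt y)\cong\bigl(v_r\dim F^\Y(u)\bigr)\cdot y$, where $F^\Y(u)=\Mor(y,u\Circt y)$. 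Since the natural unitary $\psi_{u,x_r}$ is an isomorphism between these two objects of the semi-simple category $\cat D_\Y$, comparing the multiplicity of $y$ on the two sides gives, for every $r\in J$,
\[
\sum_{s\in J}\dim F^\X_{sr}(u)\,v_s=\dim F^\Y(u)\,v_r .
\]

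It remains to recast this as the eigenvector identity in the statement. Frobenius reciprocity (Lemma~\ref{LemModCatFrob}, combined with the adjoint bijection $\Mor(X,Y)\cong\Mor(Y,X)$) gives $\dim F^\X_{sr}(u)=\dim\Mor(x_s,u\Circt x_r)=\dim\Mor(x_r,\bar u\Circt x_s)=\dim F^\X_{rs}(\bar u)$ and, in the same way, $\dim F^\Y(u)=\dim F^\Y(\bar u)$. Substituting these in the displayed identity and then replacing $u$ by $\bar u$ (legitimate since $u$ was arbitrary) turns it into $\sum_{s}\dim F^\X_{rs}(u)\,v_s=\dim F^\Y(u)\,v_r$ for all $r$; that is, the nonzero integer vector $v$ is an eigenvector of $(\dim F^\X_{rs}(u))_{r,s\in J}$ for the eigenvalue $\dim F^\Y(u)$, which is the assertion (and in fact the same $v$ serves simultaneously for all $u\in\Rep(\G)$).

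I expect no serious obstacle here; the one point needing care is the bookkeeping of multiplicity spaces versus their conjugates when pushing objects through $\theta_\#$ and $\Mor(-,y)$ — it is immaterial for this statement because only dimensions enter, but it is the step where sloppiness would bite, and one can cross-check the displayed identity directly against the coherence diagram of Corollary~\ref{CorEqvHomGrFunctrPict} specialised to $J'=\{*\}$.
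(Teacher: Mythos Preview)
Your argument is correct and mirrors the paper's proof: both exploit the isomorphism $\theta_\#(u\otimes x_s)\cong u\otimes\theta_\# x_s$ and count the multiplicity of the unique irreducible $y\in\cat D_\Y$ on each side to obtain the eigenvector identity for $v_r=\dim F_r$. The paper stops at the left-eigenvector form $\sum_r v_r\dim F^\X_{rs}(u)=\dim F^\Y(u)\,v_s$, whereas you add a harmless Frobenius--duality step to recast it as a right eigenvector; this extra step is not needed for the proposition as stated.
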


\begin{proof}
The vector $(\dim F_r)_{r \in J}$ satisfies
\begin{multline*}
\sum_{r \in J} \dim F_r \dim F^\X_{r s}(u) = \dim \Mor\left(\theta_\# ( u \otimes x_s), \Co{\Y}\right) \\
 = \dim \Mor(u \otimes \theta_\# x_s, \Co{\Y}) = \dim F^\Y(u) \dim (F_s)
\end{multline*}
for any $s \in J$ (the above sum makes sense because $(F^\X_{r s}(u))_{r, s \in J}$ is banded).  Hence it is an eigenvector of the eigenvalue $\dim F^{\Y}(u)$.
\end{proof}

\appendix
\renewcommand{\thesection}{A.\arabic{section}}
\setcounter{section}{0}

\section*{Appendix. Concrete C\texorpdfstring{$^*$}{*}-categories}\label{SecCatStuff}

In this appendix, we pick up the discussion which we started in Section~\ref{SecCCat}. It is, essentially, an elaborate write-out of the remark appearing in the proof of Theorem 2.5 of~\cite{Eti1}.

\section{Concrete semi-simple $\cC^*$-categories}

As we will show in Lemma~\ref{LemEquii}, there is essentially only one semi-simple $\cC^*$-category based on a given set $J$. This can easily be shown by using Lemma~\ref{LemEquiii}, but we would like to have a more concrete formula for the inverse of such an equivalence functor. To accomplish this, we first establish some preliminaries results. The first goal is to generalize the direct sum construction in the setting of $\cC^*$-categories.

\begin{Def}\label{DefTens}
Let $\cat{D}$ be a $\cC^*$-category. Let $X$ be an object of $\cat{D}$, and $\Hsp$ a finite-dimensional Hilbert space. An \emph{$\Hsp$-amplification of $X$} is an object $\Hsp\otimes X$ together with a linear map $\theta_X^{\Hsp}\colon \Hsp \rightarrow \Mor(X,\Hsp\otimes X)$ such that
\begin{enumerate}
\item For all $\xi,\eta\in \Hsp$, we have $\theta_X^{\Hsp}(\xi)^*\theta_X^{\Hsp}(\eta) = \langle \xi,\eta\rangle\id_X$.
\item If $\xi_i$ is an orthonormal basis of $\Hsp$, then $\sum_i \theta_X^{\Hsp}(\xi_i)\theta_X^{\Hsp}(\xi_i)^* = \mathrm{id}_{\Hsp \otimes X}$.
\end{enumerate}
\end{Def}

Note that, if $\Hsp = 0$, the second condition above implies that the $\Hsp$-amplification is a zero object.  Similarly, if $\Hsp = \C$, the $\Hsp$-amplification is equivalent to the identity functor.

\begin{Lem}
Let $\cat{D}$ be a $\cC^*$-category admitting finite direct sums, and $\mathscr{H}$ a Hilbert space of finite dimension. Then any object of $\cat{D}$ admits an $\Hsp$-amplification.  The ensuing operation $\Hf\times \cat{D}\rightarrow \cat{D}$ can be extended to an $\Hf$-module $\cC^*$-category structure on $\cat{D}$.
\end{Lem}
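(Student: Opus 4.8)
The plan is to deduce both assertions from a single universal-property observation: an $\Hsp$-amplification, when it exists, is unique up to a \emph{unique} structure-preserving isomorphism. Concretely, I would first check that if $(Y,\theta)$ and $(Y',\theta')$ are two $\Hsp$-amplifications of the same object $X$, then for any orthonormal basis $(\xi_i)$ of $\Hsp$ the morphism $w=\sum_i\theta'(\xi_i)\theta(\xi_i)^*\in\Mor(Y,Y')$ is a unitary (use conditions (1) and (2) of Definition~\ref{DefTens} for both $\theta$ and $\theta'$), is independent of the chosen basis (expand in another basis and apply (1)), and is the unique morphism with $w\,\theta(\xi)=\theta'(\xi)$ for all $\xi$; moreover this $w$ is natural in $X$. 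This little lemma is the only place any computation is really needed.

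Existence is then just the finite-direct-sum construction. Given $X$ and $\Hsp$ with $\dim\Hsp=n$, form the $n$-fold direct sum $X^{\oplus n}$, which comes equipped with isometries $v_1,\dots,v_n\in\Mor(X,X^{\oplus n})$ satisfying $v_i^*v_j=\delta_{ij}\id_X$ and $\sum_i v_iv_i^*=\id_{X^{\oplus n}}$. Fixing an orthonormal basis $(\xi_i)$ of $\Hsp$ and setting $\Hsp\otimes X:=X^{\oplus n}$ and $\theta_X^{\Hsp}(\sum_i c_i\xi_i):=\sum_i c_i v_i$, the axioms (1)--(2) of an amplification become exactly the relations among the $v_i$. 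For $\Hsp=0$ one takes the zero object, and for $\Hsp=\C$ one takes $X$ itself with $\theta_X^{\C}(1)=\id_X$, as already noted after Definition~\ref{DefTens}.

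Next I would fix, once and for all, an amplification $(\Hsp\otimes X,\theta_X^{\Hsp})$ for every pair $(\Hsp,X)$, and define the action on morphisms: for $S\in B(\Hsp,\Hsp')$ and $T\in\Mor(X,Y)$, put
\[
M(S,T)=S\otimes T:=\sum_i \theta_Y^{\Hsp'}(S\xi_i)\,T\,\theta_X^{\Hsp}(\xi_i)^{*}\in\Mor(\Hsp\otimes X,\Hsp'\otimes Y),
\]
for an orthonormal basis $(\xi_i)$ of $\Hsp$. Independence of the basis, bilinearity in $(S,T)$, and $^*$-compatibility $(S\otimes T)^{*}=S^{*}\otimes T^{*}$ follow from axiom (1); functoriality $(S'\otimes T')(S\otimes T)=(S'S)\otimes(T'T)$ and $\id_{\Hsp}\otimes\id_X=\id_{\Hsp\otimes X}$ follow from axiom (2) for $\Hsp$. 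This produces the bilinear $^*$-functor $M\colon\Hf\times\cat{D}\to\cat{D}$ restricting to the stated operation on objects.

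Finally I would obtain the unit and associativity constraints from the uniqueness lemma. The chosen amplification $\C\otimes X$ is canonically unitarily isomorphic to $(X,\id_X)$, giving the natural unitary $e_X\colon\mathbbm{1}\otimes X=\C\otimes X\to X$. For $\phi$, one checks that $\Hsp\otimes(\Hsp'\otimes X)$ with the map $\xi\otimes\xi'\mapsto\theta_{\Hsp'\otimes X}^{\Hsp}(\xi)\,\theta_X^{\Hsp'}(\xi')$ (defined on the product orthonormal basis, extended linearly) satisfies (1)--(2) relative to $\Hsp\otimes\Hsp'$, hence is an $(\Hsp\otimes\Hsp')$-amplification of $X$; the uniqueness lemma then yields the canonical natural unitary $\phi_{\Hsp,\Hsp',X}\colon(\Hsp\otimes\Hsp')\otimes X\to\Hsp\otimes(\Hsp'\otimes X)$. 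Naturality of $e$ and $\phi$, and the pentagon and triangle coherence diagrams of Definition~\ref{DefModCat}, are then automatic: in every case the two composites to be compared are morphisms intertwining the same iterated $\theta$-maps between the same iterated amplifications, so they agree by uniqueness. In short, there is no essential obstacle here; the only real content is the direct-sum construction of one amplification together with the short uniqueness computation, and the mild care needed is purely bookkeeping — ensuring that all comparison maps invoked are the canonical ones, which is precisely what the uniqueness clause guarantees.
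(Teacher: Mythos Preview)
Your argument is correct, and the underlying construction is the paper's: the same direct-sum realization of $\Hsp\otimes X$ via isometries $v_i$, the same formula for $S\otimes T$, and your comparison unitary $w=\sum_i\theta'(\xi_i)\theta(\xi_i)^*$ is exactly the paper's explicit associator $\phi_{\Hsp,\mathscr{K},X}=\sum_{i,j}\theta_{\mathscr{K}\otimes X}^{\Hsp}(f_i)\theta_X^{\mathscr{K}}(g_j)\theta_X^{\Hsp\otimes\mathscr{K}}(f_i\otimes g_j)^*$. The only difference is organizational: you isolate the uniqueness-of-amplification lemma up front and use it to dispatch naturality and the coherence diagrams without calculation, whereas the paper just writes down the formulas and leaves those routine verifications implicit; your packaging is a bit cleaner in that respect.
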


We recall that $\Hf$ is the category of finite-dimensional Hilbert spaces.

\begin{proof} Choose a fixed orthonormal basis $(e_i)_{i = 1}^n$ for each $\Hsp$. For an object $X\in \cat{D}$, define $\Hsp\otimes X$ as the direct sum $\oplus_{i=1}^n X$ of $n$ copies of $X$. With $v_i$ denoting the $i$-th isometric injection $X \rightarrow\oplus_i X$, the $\theta_X^{\Hsp}(\xi) = \sum_i\langle e_i,\xi\rangle v_i$ are easily seen to satisfy the conditions for an $\Hsp$-amplification. The resulting construction is obviously functorial in $X$. If $x$ is an operator $\Hsp \rightarrow \mathscr{K}$, we choose an orthonormal basis $(f_i)_i$ for $\Hsp$ and define $x\otimes \id_X$ to be the operator $\sum_i\theta_X^{\mathscr{K}}(x f_i)\theta_X^{\Hsp}(f_i)^*$ from $\Hsp\otimes X$ to $\mathscr{K}\otimes X$. Again, this is clearly independent of the chosen basis for $\Hsp$, and will give functoriality on the $\Hsp$-component. Finally, the associator for the module structure can be made as follows: given Hilbert spaces $\Hsp$ and $\mathscr{K}$ with respective bases $(f_i)$ and $(g_j)$, we define \[\phi_{\Hsp,\mathscr{K},X} =\sum_{i,j} \theta_{\mathscr{K}\otimes X}^{\Hsp}(f_i)\theta_X^{\mathscr{K}}(g_j)\theta_X^{\Hsp\otimes \mathscr{K}}(f_i\otimes g_j)^*\] as a morphism $(\Hsp\otimes \mathscr{K})\otimes X \rightarrow \Hsp\otimes (\mathscr{K}\otimes X)$.
\end{proof}

As a consequence of the $\Hf$-module structure, we obtain a natural isomorphism
\begin{equation*}
\Mor(\Hsp \otimes X, \mathscr{K} \otimes Y) \simeq \mathscr{K}\otimes \overline{\Hsp} \otimes \Mor(X, Y).
\end{equation*}
In the presentation of the right hand side, composition of morphisms involves the concatenation of the form $\Hsp \otimes \bar{\Hsp} \rightarrow \C$ `in the middle' by means of the inner product.

\begin{Not} Let $\cat{D}$ be a semi-simple $\cC^*$-category based on the set $J$. Let $r\in J$ and $X \in \cat{D}$. We denote by $X(r)$ the Hilbert space $\Mor(X_r,X)$.
\end{Not}

\begin{Lem}\label{LemEqui}
Let $\cat{D}$ be a semi-simple $\cC^*$-category based on an index set $J$. Then there is a natural unitary equivalence $X\rightarrow \oplus_{r\in J} X(r)\otimes X_r $ for $X \in \cat{D}$.
\end{Lem}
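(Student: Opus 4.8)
The plan is to construct the equivalence explicitly from the amplification maps $\theta_{X_r}^{X(r)}\colon X(r)\to\Mor(X_r,X(r)\otimes X_r)$ supplied by the $\Hf$-module structure of the previous lemma, writing $X(r)=\Mor(X_r,X)$ as above. First I would record that, $\cat{D}$ being semi-simple and based on $J$, each $X(r)$ is finite-dimensional and only finitely many are nonzero, so $\bigoplus_{r\in J}X(r)\otimes X_r$ is a bona fide object of $\cat{D}$ and $X\mapsto\bigoplus_{r}X(r)\otimes X_r$ is a $\cC^*$-functor (the composite of $X\mapsto(\Mor(X_r,X))_{r}$ with amplification and finite direct sum). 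Choosing for each $r$ an orthonormal basis $(\xi_i^{(r)})_i$ of $X(r)$, I would set
\[
v_X=\bigoplus_{r\in J}\Bigl(\sum_i\theta_{X_r}^{X(r)}(\xi_i^{(r)})\,(\xi_i^{(r)})^*\Bigr)\colon X\longrightarrow\bigoplus_{r\in J}X(r)\otimes X_r,
\]
and then check, via a one-line computation using linearity of $\theta_{X_r}^{X(r)}$ and antilinearity of $^*$, that the inner summands do not depend on the chosen bases, so that $v_X$ is well defined.

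Next I would prove that $v_X$ is unitary. Condition (1) in the definition of an amplification gives $\theta_{X_r}^{X(r)}(\xi_i^{(r)})^*\theta_{X_r}^{X(r)}(\xi_j^{(r)})=\delta_{ij}\id_{X_r}$, so that $v_X^*v_X=\sum_{r}\sum_i\xi_i^{(r)}(\xi_i^{(r)})^*$; and condition (2) together with $(\xi_i^{(r)})^*\xi_j^{(r)}=\delta_{ij}\id_{X_r}$ gives $v_Xv_X^*=\bigoplus_{r}\id_{X(r)\otimes X_r}$. The only substantive point is that $\sum_{r}\sum_i\xi_i^{(r)}(\xi_i^{(r)})^*=\id_X$, i.e.\ that $v_X$ is surjective, and this is where semi-simplicity is genuinely used. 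Here I would invoke the decomposition $X\cong\bigoplus_{r}m_rX_r$: such an isomorphism amounts to isometries $w_{r,k}\colon X_r\to X$ ($1\le k\le m_r$) with mutually orthogonal ranges and $\sum_{r,k}w_{r,k}w_{r,k}^*=\id_X$. Since $w_{r,k}^*w_{r,l}=\delta_{kl}\id_{X_r}$ and $\dim X(r)=m_r$, the $w_{r,k}$ form an orthonormal basis of $X(r)$; computing $v_X^*v_X$ with this basis (legitimate by the basis-independence above) yields exactly $\sum_{r,k}w_{r,k}w_{r,k}^*=\id_X$.

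Finally, for naturality, given $f\colon X\to Y$ the functor $X\mapsto\bigoplus_{r}X(r)\otimes X_r$ acts on the $r$-th block through $f\mapsto f(r)\otimes\id_{X_r}$, where $f(r)\colon X(r)\to Y(r)$ sends $\xi$ to $f\circ\xi$. Unwinding the $\Hf$-action on morphisms, the $r$-th block of $(\bigoplus_{r}f(r)\otimes\id_{X_r})\circ v_X$ simplifies to $\sum_k\theta_{X_r}^{Y(r)}(f\circ w_{r,k})\,w_{r,k}^*$ for an orthonormal basis $(w_{r,k})_k$ of $X(r)$. I would then expand each vector $f\circ w_{r,k}\in Y(r)$ in an orthonormal basis $(\eta_{r,j})_j$ of $Y(r)$ and, using the trivial identity $(f^*\circ\eta)^*=\eta^*\circ f$ to match expansion coefficients, reorganize this into $\sum_j\theta_{X_r}^{Y(r)}(\eta_{r,j})\,\eta_{r,j}^*\circ f$, which is the $r$-th block of $v_Y\circ f$. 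This shows $v_\bullet$ is a natural transformation; being objectwise unitary, it is the desired natural unitary equivalence. I expect the surjectivity identity of the second paragraph to be the only real obstacle — everything else is bookkeeping with the amplification maps — and I note that $v_X^*\colon\bigoplus_{r}X(r)\otimes X_r\to X$ may be described invariantly as the morphism corresponding to the identity under $\Mor(\bigoplus_{r}X(r)\otimes X_r,X)\cong\bigoplus_{r}\overline{X(r)}\otimes X(r)$.
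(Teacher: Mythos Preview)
Your argument is correct, but it takes a different route from the paper's. The paper proceeds via the Yoneda lemma: for any object $Y$ it observes that the composition map
\[
\bigoplus_{r\in J}\Mor(X_r,X)\otimes\Mor(Y,X_r)\longrightarrow\Mor(Y,X)
\]
is an isomorphism by semi-simplicity, identifies the source with $\Mor\bigl(Y,\bigoplus_{r}X(r)\otimes X_r\bigr)$ via the amplification, and then invokes Yoneda to conclude. Your explicit $v_X$ is in fact the morphism Yoneda produces (your closing remark, that $v_X^*$ corresponds to the identity under $\Mor(\bigoplus_r X(r)\otimes X_r,X)\cong\bigoplus_r\overline{X(r)}\otimes X(r)$, is exactly this identification). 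The trade-off: the paper's argument is two lines and gets naturality for free, but leaves the \emph{unitarity} of the resulting isomorphism to a small further check; your direct verification makes unitarity transparent and furnishes a concrete formula that is handy when one later unpacks the equivalence $\cat{D}\simeq\Hf^J$ and the functor into $\cat{E}_\textrm{f}^J$. One tiny omission worth recording in your computation of $v_Xv_X^*$: the off-diagonal blocks vanish because $(\xi_j^{(s)})^*\xi_i^{(r)}\in\Mor(X_r,X_s)=0$ for $r\neq s$; you only wrote the diagonal case.
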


\begin{proof}
Let $Y$ be another object of $\cat{D}$.  Considering the central support of range projections for morphisms in $\Mor(Y, X)$, we see that the map
\[
\bigoplus_{r \in J}\Mor(X_r, X) \otimes \Mor(Y, X_r) \rightarrow \Mor(Y, X)
\]
induced by composition of morphisms is an isomorphism.  The left hand side of the above is, by definition of the amplification, canonically isomorphic to $\Mor\left(Y, \oplus_{r\in J} X(r) \otimes X_r\right)$.  By the Yoneda lemma, we obtain the assertion.
\end{proof}

The next definition provides the canonical semi-simple $\cC^*$-category with which we will want to compare an arbitrary one.

\begin{Def}\label{ExaCat} Let $J$ be a set. A \emph{$J$-graded Hilbert space} is a Hilbert space $\Hsp$ endowed with a direct sum decomposition $\Hsp = \oplus_{r \in J} \Hsp_r$ (the right hand side should be understood as the Hilbert space direct sum). They form a $\cC^*$-category $\mathcal{H}^J$ by considering as morphisms the grading-preserving operators,
\begin{align*}
\Mor(\Hsp, \mathscr{K}) &= \{T \in B(\Hsp,\mathscr{K})\mid \forall r\in J\colon T(\mathscr{H}_r)\subseteq \mathscr{K}_r\} \\
&= \Bigl \{ (T_r)_{r \in J} \in \prod_{r \in J} B(\Hsp_r, \mathscr{K}_r) \mid \sup_{r \in J} \left \| T_r \right \| < \infty \Bigr \}.
\end{align*}

The full subcategory of $J$-graded \emph{finite-dimensional} Hilbert spaces is denoted $\Hf^J$.
\end{Def}

The category $\Hf^J$ then forms a semi-simple $\cC^*$-category, based on the set $J$ in a natural way. Namely, an irreducible object for the label $r\in J$ is given by the graded Hilbert space $\mathbb{C}_r$ which has $\mathbb{C}$ as component at place $r$ and $0$ at the other places.

\begin{Lem}\label{LemEquii}
Let $\cat{D}$ be a semi-simple $\cC^*$-category based on a set $J$. Then the categories $\cat{D}$ and $\Hf^J$ are unitarily equivalent, an adjoint pair of equivalences being given by
\begin{align*}
X &\mapsto \bigoplus_{r \in J} X(r), &
\mathscr{H} &\mapsto \bigoplus_{r\in J} \mathscr{H}_r\otimes X_r,
\end{align*}
where $\mathscr{H}_r$ denotes the $r$-th component of $\mathscr{H}$.
\end{Lem}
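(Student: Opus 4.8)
The plan is to verify directly that the two stated assignments are mutually quasi-inverse $\cC^*$-functors, and then to upgrade the pair to an adjoint equivalence. Write $F\colon \cat{D}\to \Hf^J$ for $X\mapsto \bigoplus_{r\in J} X(r)$ and $G\colon \Hf^J\to \cat{D}$ for $\mathscr{H}\mapsto \bigoplus_{r\in J}\mathscr{H}_r\otimes X_r$. First I would check these are well defined: since any $X\in\cat{D}$ is a finite direct sum of simple objects, $X(r)=\Mor(X_r,X)$ is finite-dimensional and vanishes for all but finitely many $r$, so $F(X)\in\Hf^J$; dually, a finite-dimensional $\mathscr{H}$ has $\mathscr{H}_r=0$ for all but finitely many $r$, so $G(\mathscr{H})$ is a genuine (finite) direct sum in $\cat{D}$. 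On morphisms, $F$ sends $f\in\Mor(X,Y)$ to post-composition $X(r)\to Y(r)$, $S\mapsto fS$; this is grading-preserving, uniformly bounded by $\norm{f}$, and $^*$-preserving, because with respect to the inner product $\langle S,T\rangle=S^*T$ the Hilbert space adjoint of $S\mapsto fS$ is exactly $U\mapsto f^*U$ (if $S^*(f^*U-gU)=0$ for all $S$, then taking $S=f^*U-gU$ forces $gU=f^*U$). Hence $F$ is a $\cC^*$-functor, and $G$ is one via the $\Hf$-module functoriality established above, sending $T=(T_r)_r$ to $\bigoplus_r T_r\otimes\id_{X_r}$.

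For $G\circ F\cong\id_{\cat{D}}$ I would simply invoke Lemma~\ref{LemEqui}, which provides precisely a natural \emph{unitary} equivalence $X\xrightarrow{\ \sim\ }\bigoplus_{r\in J}X(r)\otimes X_r=G(F(X))$. For $F\circ G\cong\id_{\Hf^J}$ I compute the $r$-component of $F(G(\mathscr{H}))$, namely $\Mor\bigl(X_r,\bigoplus_{s}\mathscr{H}_s\otimes X_s\bigr)$; using the natural isomorphism $\Mor(X_r,\mathscr{K}\otimes X_s)\cong\mathscr{K}\otimes\Mor(X_r,X_s)$ coming from the amplification structure, together with $\Mor(X_r,X_s)=\C$ for $r=s$ and $0$ otherwise, this is canonically and unitarily $\mathscr{H}_r$. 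Collecting over $r$ yields a natural unitary $\mathscr{H}\xrightarrow{\ \sim\ }F(G(\mathscr{H}))$. Thus $F$ and $G$ are unitarily quasi-inverse, so each is an equivalence. (Alternatively, $F$ carries $X_r$ to the graded Hilbert space $\C_r$, and $\{\C_r\}_{r\in J}$ is a maximal family of mutually non-isomorphic simple objects of $\Hf^J$, whence $F$ is an equivalence by Lemma~\ref{LemEquiii}, with $G$ exhibited as a quasi-inverse by the preceding computation.)

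To obtain an \emph{adjoint} pair I would take the unit $\eta_{\mathscr{H}}\colon\mathscr{H}\to F(G(\mathscr{H}))$ from the second paragraph and the counit $\varepsilon_X\colon G(F(X))\to X$ from Lemma~\ref{LemEqui}, and verify one of the two triangle identities on the simple objects $X_r$ (on which both $F$ and $G$ act, up to the chosen identifications, as the identity); for an equivalence the remaining triangle identity is then automatic. This exhibits $(F,G,\eta,\varepsilon)$ as an adjoint equivalence with all structure morphisms unitary, which is the assertion.

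The main obstacle I anticipate is bookkeeping rather than conceptual: making the various canonical identifications genuinely \emph{natural} in all variables and genuinely \emph{unitary}, i.e.\ tracking the inner products $\langle S,T\rangle=S^*T$ through the amplification isomorphisms so that no spurious scalar factors creep in, and checking that the module-category associators built in the $\Hf$-amplification construction are compatible with the decomposition in Lemma~\ref{LemEqui}. None of this is deep, but it is where the care is required.
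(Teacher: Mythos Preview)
Your proposal is correct and follows essentially the same route as the paper: the paper's proof is a one-line sketch saying to use Lemma~\ref{LemEqui} and Definition~\ref{DefTens} to produce invertible unit and counit maps for the stated functors, and you have simply unpacked that sketch in detail, including the explicit verification that $F$ and $G$ are $\cC^*$-functors, the computation of $F\circ G$ via the amplification isomorphism, and the passage to an adjoint equivalence. The only part not explicitly in the paper is your appeal to the standard fact that for an equivalence one triangle identity implies the other, which is fine; otherwise the arguments coincide.
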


\begin{proof}
An equivalence between $\cat{D}$ and $\Hf^J$ can be established by using Lemma~\ref{LemEqui} and Definition~\ref{DefTens} to define invertible unit and co-unit maps for the stated functors.
\end{proof}

\section{Functors and natural transformations}
\label{SubSecFunctrNatTrasf}

The goal of this section is to give an equally concrete description of functors between semi-simple $\cC^*$-categories, and natural transformations between them.

Let $J$ and $J'$ be index sets.  Let $\Hsp = \oplus_{p \in J', r \in J} \Hsp_{p r}$ be a Hilbert space endowed with a direct sum decomposition over the set $J' \times J$. We also assume that $\Hsp$ is \emph{column-finite} in the sense that $\sum_{p} \dim(\Hsp_{p r})$ is finite for all $r$. In particular all $\Hsp_{p r}$ are finite-dimensional. Then one has a functor $F^\Hsp$ from $\Hf^J$ to $\Hf^{J'}$ given by $(F^\Hsp \mathscr{K})_p = \oplus_r \Hsp_{p r} \otimes \mathscr{K}_r$ on objects, and $(F^\Hsp(T))_p = \oplus_r \id_{pr} \otimes T_r$ on morphisms.

If $J''$ is another index set and $\Hsp'$ is a column-finite $J'' \times J'$-graded Hilbert space, the composition of functors $F^{\Hsp'}$ and $F^\Hsp$ is given by $F^\mathscr{K}$, where the $J'' \times J$-graded Hilbert space $\mathscr{K}$ is given by the $l^{\infty}(J')$-balanced tensor product
\[
(\Hsp' \bigotimes_{l^{\infty}(J')} \Hsp)_{v r} = \bigoplus_{p \in J'} (\Hsp'_{v p}\otimes \Hsp_{p r}) \quad (v \in J'', r \in J).
\]

Let $\cat{D}$ (resp. $\cat{D}'$) be a semi-simple $\cC^*$-category based on an index set $J$ (reps. $J'$), with a system of irreducible objects $(X_r)_{r \in J}$ (resp. $(Y_p)_{p \in J'}$).  The next proposition shows that any functor between abstract semi-simple $\cC^*$-categories is induced by a column-finite $J' \times J$-graded Hilbert space as above.

\begin{Prop}\label{PropFunctSSGrHsp}
Let $F$ be a $\cC^*$-functor from $\cat{D}$ to $\cat{D}'$.  Up to the unitary equivalence of Lemma~\ref{LemEquii}, $F$ is naturally equivalent to the functor induced by the $J' \times J$-graded Hilbert space $\Hsp^F$ whose $(p, r)$-th component is $\Mor(Y_p, F(X_r))$.
\end{Prop}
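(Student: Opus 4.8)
The plan is to exhibit an explicit natural unitary equivalence between $F$ and the functor $F^{\Hsp^F}$ induced by the graded Hilbert space with components $\Hsp^F_{p,r} = \Mor(Y_p, F(X_r))$. First I would verify that $\Hsp^F$ is indeed column-finite: since $\cat{D}'$ is semi-simple and $F(X_r)$ is a single object, only finitely many $p \in J'$ satisfy $\Mor(Y_p, F(X_r)) \neq 0$, and each such space is finite-dimensional by semi-simplicity. This makes $F^{\Hsp^F}$ well-defined as a $\cC^*$-functor from $\Hf^J$ to $\Hf^{J'}$ in the sense of the preceding paragraph.

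Next I would transport everything to the abstract categories via Lemma~\ref{LemEquii}, so that the claim becomes: $F$ is naturally equivalent to the functor $\cat{D} \to \cat{D}'$ sending $X \mapsto \oplus_{p \in J'} \bigl(\oplus_{r \in J} \Hsp^F_{p,r} \otimes X(r)\bigr) \otimes Y_p$. The key computation is to apply $F$ to the canonical decomposition $X \cong \oplus_{r \in J} X(r) \otimes X_r$ from Lemma~\ref{LemEqui}. Since $F$ is a $\cC^*$-functor it preserves amplifications by finite-dimensional Hilbert spaces (the amplification of Definition~\ref{DefTens} is characterized by the relations in conditions (1) and (2), which are $^*$-algebraic and hence preserved by any $\cC^*$-functor) and preserves finite direct sums, so $F(X) \cong \oplus_{r \in J} X(r) \otimes F(X_r)$. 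Now I would decompose each $F(X_r)$ in $\cat{D}'$ using Lemma~\ref{LemEqui} again: $F(X_r) \cong \oplus_{p \in J'} \Mor(Y_p, F(X_r)) \otimes Y_p = \oplus_{p} \Hsp^F_{p,r} \otimes Y_p$. Substituting and reorganizing the double sum (using the $\Hf$-module associator and the natural isomorphism $\Hsp \otimes (\mathscr{K} \otimes Y) \cong (\Hsp \otimes \mathscr{K}) \otimes Y$) gives exactly the image of $X$ under $F^{\Hsp^F}$.

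Then I would check naturality: given $f \in \Mor(X,Y)$ in $\cat{D}$, one uses the natural isomorphism $\Mor(\Hsp \otimes X, \mathscr{K} \otimes Z) \simeq \mathscr{K} \otimes \overline{\Hsp} \otimes \Mor(X,Z)$ recorded after the $\Hf$-module lemma to see that $f$ is encoded by the family $(f(r))_r$ of its components $X(r) \to Y(r)$, i.e.\ the maps $\Mor(X_r, X) \to \Mor(X_r, Y)$ given by postcomposition; and that $F(f)$, under the two decompositions above, is encoded by the family $\id_{\Hsp^F_{p,r}} \otimes f(r)$ acting on $\Hsp^F_{p,r} \otimes X(r)$ — which is precisely the definition of $F^{\Hsp^F}$ on morphisms. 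The naturality squares for the equivalence then follow because each decomposition isomorphism is itself natural (this is the content of Lemmas~\ref{LemEqui} and~\ref{LemEquii}, obtained from the Yoneda lemma).

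The main obstacle I anticipate is bookkeeping rather than conceptual: one must be careful that the isomorphism $F(\Hsp \otimes X) \cong \Hsp \otimes F(X)$ is natural in both $\Hsp$ and $X$ and compatible with the module associators, so that the two successive applications of Lemma~\ref{LemEqui} (once in $\cat{D}$, once in $\cat{D}'$ applied to each $F(X_r)$) glue together coherently into a single natural transformation rather than a mere object-wise isomorphism. Since $F$ is not assumed to carry any module structure a priori, the fact that it automatically commutes with amplification up to canonical natural isomorphism is exactly what needs to be pinned down; once that is in hand, the rest is a routine reindexing of direct sums.
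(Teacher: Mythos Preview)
Your proposal is correct and follows essentially the same approach as the paper. The paper's version is slightly more economical: instead of arguing abstractly that $F$ preserves amplifications and then applying Lemma~\ref{LemEqui} twice, it writes down the natural transformation in one stroke via the composition formula $\phi_p(f\otimes g)=F(g)\circ f$ for $f\in\Mor(Y_p,F(X_r))$ and $g\in\Mor(X_r,X)$, with unitarity and naturality read off from semi-simplicity---your structural argument unwinds to exactly this map.
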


\begin{proof} First of all, the graded Hilbert space $\oplus_{p,r} \Mor(Y_p,F(X_r))$ is indeed column-finite, as the $F(X_r)$ splits into a finite number of irreducible objects.

A natural equivalence as in the statement of the proposition must then be given by unitary maps
\[
\phi_p\colon \bigoplus_{r \in J} \Hsp^F_{p r} \otimes X(r) \rightarrow (F X)(p)
\]
for $p \in J'$.  On the direct summand at $r$, we define $\phi_p$ as the map
\begin{equation*}
\Mor(Y_p, F(X_r)) \otimes \Mor(X_r, X) \ni f \otimes g \mapsto F(g) \circ f \in \Mor(Y_p, F X).
\end{equation*}
Then the resulting map is indeed unitary by the semi-simplicity of $\cat{D}$.  The compatibility with the morphisms in $\cat{D}$ is apparent from the above definition of $\phi_p$.
\end{proof}

Suppose we are given two $J' \times J$-graded Hilbert spaces $\Hsp$ and $\mathscr{K}$, and an operator $T \in B(\Hsp, \mathscr{K})$ which respects the grading.  Then, we obtain a natural transformation $\eta^T$ of $F^\Hsp$ into $F^\mathscr{K}$ by the formula
\[
(\eta^T_{\mathscr{M}})_p = \bigoplus_{r \in J} T_{p r} \otimes \id_{\mathscr{M}_r}\colon F^\Hsp(\mathscr{M})_p \rightarrow F^\mathscr{K}(\mathscr{M})_p,
\]
because the norm of this operator is uniformly bounded by $\norm{T}$.  Thus, we obtain a morphism from $F^\Hsp$ to $F^\mathscr{K}$ in the category $\Fun(\Hf^J, \Hf^{J'})$ (see Remark~\ref{RemFuncCat}).

Conversely, let $F$ and $G$ be functors from $\cat{D}$ to $\cat{D}'$, and $\eta$ be a natural transformation of uniformly bounded norm from $F$ to $G$.  Then the induced maps
\[
T^\eta_{p r}\colon \Mor(Y_p, F(X_r)) \rightarrow \Mor(Y_p, G(X_r)), \quad f \mapsto \eta_{X_r} \circ f
\]
has a norm bounded from above by $\norm{\eta}$.  Now, from the way $F$ and $\Hsp^F$ is identified in Proposition~\ref{PropFunctSSGrHsp}, one sees that the above correspondences $T \mapsto \eta^T$ and $\eta \mapsto T^\eta$ are inverse to each other.  We record this for reference in the following proposition.

\begin{Prop}\label{PropNatTransSSGrTrans}
Let $F$ and $G$ be functors from $\cat{D}$ to $\cat{D}'$.  Then morphisms from $F$ to $G$ in $\Fun(\cat{D}, \cat{D}')$ can be naturally identified with grading preserving bounded operators from $\Hsp^F$ to $\Hsp^G$.
\end{Prop}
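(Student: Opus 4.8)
The plan is to assemble the discussion immediately preceding the statement into a pair of mutually inverse, structure-preserving assignments. First I would use Proposition~\ref{PropFunctSSGrHsp} to reduce to the concrete situation: $F$ is naturally equivalent to $F^{\Hsp^F}$ and $G$ to $F^{\Hsp^G}$, where $\Hsp^F_{pr} = \Mor(Y_p, F(X_r))$ and $\Hsp^G_{pr} = \Mor(Y_p, G(X_r))$, and these equivalences are the explicit unitaries $\phi_p$ of that proposition. Since the functor $\Fun(\cat{D},\cat{D}') \to \Fun(\Hf^J,\Hf^{J'})$ obtained by conjugating with the equivalences of Lemma~\ref{LemEquii} and Proposition~\ref{PropFunctSSGrHsp} is fully faithful, a morphism from $F$ to $G$ in $\Fun(\cat{D},\cat{D}')$ amounts to the same data as a morphism from $F^{\Hsp^F}$ to $F^{\Hsp^G}$.

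Next I would spell out the two assignments. Given a grading-preserving bounded operator $T\colon \Hsp^F\to\Hsp^G$, the natural transformation $\eta^T$ already constructed before the statement has components $(\eta^T_{\mathscr{M}})_p = \bigoplus_{r} T_{pr}\otimes \id_{\mathscr{M}_r}$, and one checks at once that $\norm{\eta^T_{\mathscr{M}}} = \sup_{p,r}\norm{T_{pr}} \leq \norm{T}$, so $\eta^T$ is indeed a morphism in $\Fun$. Conversely, given a uniformly bounded natural transformation $\eta$, the operators $T^\eta_{pr}(f) = \eta_{X_r}\circ f$ on $\Hsp^F_{pr}=\Mor(Y_p,F(X_r))$ are grading preserving and satisfy $\norm{T^\eta_{pr}}\leq\norm{\eta_{X_r}}$. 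The identity $T^{\eta^T} = T$ is an immediate computation from the definition of $\phi_p$.

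The one point requiring care is $\eta^{T^\eta} = \eta$, together with the matching of norms. Here I would use naturality of $\eta$ with respect to the isometric injections $X_r\to\mathscr{M}$, equivalently the decomposition $\mathscr{M}\cong\bigoplus_r \mathscr{M}(r)\otimes X_r$ of Lemma~\ref{LemEqui}: naturality forces $\eta_{\mathscr{M}}$ to be block diagonal along this decomposition (after applying the unitaries of Proposition~\ref{PropFunctSSGrHsp} on both $F\mathscr{M}$ and $G\mathscr{M}$), with $(p,r)$-block equal to $T^\eta_{pr}\otimes\id_{\mathscr{M}(r)}$; this is exactly $\eta^{T^\eta}_{\mathscr{M}}$. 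The same argument yields $\norm{\eta} = \sup_{\mathscr{M}}\norm{\eta_{\mathscr{M}}} = \sup_{p,r}\norm{T^\eta_{pr}}$, so the bijection is isometric. Since composition and the $^*$-operation in $\Fun(\cat{D},\cat{D}')$ are computed componentwise, the same holds for grading-preserving operators, and the bijection intertwines both; this gives the asserted \emph{natural} identification (naturality in $F$ and $G$ being transparent from the construction). The main obstacle, and it is a mild one, is precisely this block-diagonality step that pins down $\eta_{\mathscr{M}}$ from its values on the simple objects; the rest is bookkeeping.
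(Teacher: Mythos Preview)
Your proposal is correct and follows the same route as the paper: the paper's argument is precisely the discussion preceding the proposition, which constructs the maps $T\mapsto\eta^T$ and $\eta\mapsto T^\eta$ and asserts, with reference to the identification of Proposition~\ref{PropFunctSSGrHsp}, that they are mutually inverse. You have simply filled in the details the paper leaves implicit, in particular the block-diagonality step for $\eta^{T^\eta}=\eta$ via naturality along the decomposition of Lemma~\ref{LemEqui}.
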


\section{Concrete semi-simple tensor $\cC^*$-categories}

We next apply the above constructions to the endomorphism tensor category $\End(\cat{D})_\textrm{f}$ associated with a semi-simple $\cC^*$-category $\cat{D}$.

\begin{Not}
Let $J$ be an index set, and denote by $\cat{E}^{J}$ the $\cC^*$-category of column-finite $J \times J$-graded Hilbert spaces $\Hsp = \oplus_{r,s\in J}\Hsp_{rs}$. As morphisms, we take the bounded operators $v\colon \Hsp\rightarrow \mathscr{K}$ which preserve the grading.
\end{Not}

By the results of Section~\ref{SubSecFunctrNatTrasf}, we can identify $\cat{E}^J$ with the tensor $\cC^*$-category of $\cC^*$-endofunctors on $\Hf^J$.  Thus, the tensor product, is given by the $l^{\infty}(J)$-balanced tensor product, and the unit object $\mathbbm{1}_{J}$ is given by $l^2(J)$ with the diagonal $J\times J$-grading $(\mathbbm{1}_{J})_{s t} = \delta_{s, t} \C$.

\begin{Lem}
The maximal rigid subcategory $\cat{E}^J_\textrm{f}$ of $\cat{E}^J$ has as its objects those $\Hsp$ which satisfy the condition \[\sup_{r}\sum_s (\dim(\Hsp_{rs}) + \dim(\Hsp_{s r}))<\infty.\] In particular, all $\mathscr{H}_{rs}$ are finite-dimensional, and only a finite number of $\Hsp_{rs}$ are non-zero on each `row' and `column', i.e.~ the grading is \emph{banded}. The dual $d(\Hsp)$ of $\Hsp$ can then be given by $d(\Hsp)_{rs} = \overline{\Hsp_{s r}} = \Hsp_{s r}^*$ with duality morphisms
\begin{align*}
R_{\Hsp}\colon l^2(J) &\rightarrow \bigoplus_{r,s\in J} \overline{\Hsp_{rs}}\otimes \Hsp_{rs}, \quad \delta_s \mapsto \sum_{r,i} \overline{\xi^{(r,s)}_i} \otimes \xi^{(r,s)}_i\\
\bar{R}_{\Hsp}\colon l^2(J) &\rightarrow \bigoplus_{r,s\in J} \Hsp_{rs}\otimes \overline{\Hsp_{rs}}, \quad \delta_r \mapsto \sum_{s,i} \xi^{(r,s)}_i \otimes  \overline{\xi^{(r,s)}_i},
\end{align*}
where the $\xi^{(r,s)}_i$ form an orthogonal basis of $\Hsp_{rs}$.
\end{Lem}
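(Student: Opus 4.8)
The plan is to establish the two halves of the statement -- the description of the objects of $\cat{E}^J_\textrm{f}$ and the formula for the dual -- at the same time: first I would check that, under the stated dimension bound, the displayed triple $(d(\Hsp),R_{\Hsp},\bar R_{\Hsp})$ really is a dual in $\cat{E}^J$, and then I would read the bound back off from the existence of an arbitrary dual. Throughout one works inside $\cat{E}^J$ regarded as a tensor $\cC^*$-category with the $l^{\infty}(J)$-balanced tensor product and unit object $\mathbbm{1}_J=l^2(J)$, and ``dual'' means in the sense of Definition~\ref{DefTensor}.

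For the first half, fix $\Hsp$ with $\sup_r\sum_s(\dim\Hsp_{rs}+\dim\Hsp_{sr})<\infty$. This condition makes every $\Hsp_{rs}$ finite-dimensional and the grading banded, so $d(\Hsp)$, defined by $d(\Hsp)_{rs}=\overline{\Hsp_{sr}}$, is again a column-finite graded Hilbert space, i.e.\ an object of $\cat{E}^J$. The operators $R_{\Hsp}$ and $\bar R_{\Hsp}$ are block-diagonal for the $J$-grading of $\mathbbm{1}_J$, with $\norm{R_{\Hsp}\delta_s}^2=\sum_r\dim\Hsp_{rs}$ and $\norm{\bar R_{\Hsp}\delta_r}^2=\sum_s\dim\Hsp_{rs}$ (independently of the chosen orthonormal bases $\xi^{(r,s)}_i$); hence $\norm{R_{\Hsp}}^2=\sup_s\sum_r\dim\Hsp_{rs}$ and $\norm{\bar R_{\Hsp}}^2=\sup_r\sum_s\dim\Hsp_{rs}$ are finite, so $R_{\Hsp}$ and $\bar R_{\Hsp}$ are genuine morphisms of $\cat{E}^J$. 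Finally I would expand the two conjugate equations \eqref{EqDualityMors} for this triple in the balanced tensor product: each graded component of $(\bar R_{\Hsp}^{*}\otimes\id_{\Hsp})(\id_{\Hsp}\otimes R_{\Hsp})$ and of $(R_{\Hsp}^{*}\otimes\id_{d(\Hsp)})(\id_{d(\Hsp)}\otimes\bar R_{\Hsp})$ collapses to the ordinary finite-dimensional ``zig-zag'' identity for a single block $\Hsp_{rs}$ with the conjugation data of Examples~\ref{ExaTenso}(1) -- concretely, to the completeness relation $\sum_i\langle\xi^{(r,s)}_i,\xi\rangle\,\xi^{(r,s)}_i=\xi$. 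So the conjugate equations hold and $\Hsp\in\cat{E}^J_\textrm{f}$, with dual $d(\Hsp)$ and conjugation morphisms as stated.

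For the converse, suppose $(\bar\Hsp,R,\bar R)$ is \emph{any} dual of $\Hsp$ in $\cat{E}^J$, so that $R\colon\mathbbm{1}_J\to\bar\Hsp\otimes\Hsp$ and $\bar R\colon\mathbbm{1}_J\to\Hsp\otimes\bar\Hsp$ are \emph{bounded} grading-preserving operators satisfying \eqref{EqDualityMors}. Reading the conjugate equations blockwise and identifying $\overline{H}\otimes K$ with $\Hom(\overline{H},K)$ as usual, I would show that the blocks of $R$ form families of \emph{invertible} operators between the blocks of $\overline{\bar\Hsp}$ and of $\Hsp$, whose adjoint inverses constitute the blocks of $\bar R$; this already forces $\dim\bar\Hsp_{sr}=\dim\Hsp_{rs}$, hence $\bar\Hsp\cong d(\Hsp)$, consistently with the uniqueness of duals from the remarks after Definition~\ref{DefTensor}. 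The norms $\norm{R}$ and $\norm{\bar R}$ then control, uniformly over the grading, the squared Hilbert--Schmidt norms of these operators and of their inverses; combining this with the inequality $\dim\Hsp_{rs}\le\norm{\rho}_{\mathrm{HS}}^{2}\norm{\rho^{-1}}_{\mathrm{HS}}^{2}$, valid for any invertible $\rho$ on $\Hsp_{rs}$, yields $\sup_s\sum_r\dim\Hsp_{rs}<\infty$ and $\sup_r\sum_s\dim\Hsp_{rs}<\infty$, which is exactly the condition in the statement (and in particular $\Hsp$ is banded).

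The componentwise zig-zag verification of the first half, and the identification of the dual object, are entirely routine. The step I expect to need the most care is the converse: one has to respect the fact that $\End(\mathbbm{1}_J)=l^{\infty}(J)$ is not $\C$ -- so the relevant ``dimensions'' are bounded functions on $J$ rather than scalars -- and then argue that the uniform boundedness of the conjugation morphisms, which is precisely what membership in the rigid subcategory $\cat{E}^J_\textrm{f}$ encodes, forces the uniform dimension bound, independently of which solution of the conjugate equations one starts from.
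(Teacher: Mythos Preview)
Your outline is correct and follows essentially the same route as the paper. Both directions are handled the same way: in the forward direction you (and the paper) check that the dimension bound makes $R_{\Hsp},\bar R_{\Hsp}$ bounded and that the conjugate equations reduce blockwise to the finite-dimensional zig-zag; in the converse you both read the blocks of an arbitrary pair $(R,\bar R)$ as mutually inverse (anti-)linear maps between $\bar\Hsp_{rs}$ and $\Hsp_{sr}$ whose Hilbert--Schmidt norms are controlled by $\|R\|$ and $\|\bar R\|$.

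The only difference is in the very last step. The paper finishes by contradiction: if $\sum_s\dim\Hsp_{r_n s}\ge n$, then among the eigenvalues of the blockwise positive operators there is one $\lambda\le\|R\|^2/n$, while the bound on $\bar R$ forces $\lambda^{-1}\le\|\bar R\|^2$. Your direct inequality $\dim\le\|\rho\|_{\mathrm{HS}}^{2}\|\rho^{-1}\|_{\mathrm{HS}}^{2}$ is an equivalent repackaging of the same eigenvalue estimate. One point worth making explicit when you write it out: the two Hilbert--Schmidt controls coming from $R$ and $\bar R$ are sums over \emph{opposite} indices (one is $\sup_r\sum_s$, the other $\sup_s\sum_r$), so a naive Cauchy--Schwarz on the sum will not close up; what does work is to pull out the uniform \emph{pointwise} bound $\|\rho_{rs}\|_{\mathrm{HS}}^{2}\le\|R\|^{2}$ on one factor and then use the summed bound $\sum\|\rho^{-1}\|_{\mathrm{HS}}^{2}\le\|\bar R\|^{2}$ on the other (and symmetrically for the transposed sum). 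With that bookkeeping your argument goes through and gives the same bound $\|R\|^{2}\|\bar R\|^{2}$ on the row and column dimension sums that the paper's eigenvalue contradiction yields.
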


\begin{proof} The restriction on the dimensions of the $\Hsp_{rs}$ ensures that both operators $R_{\Hsp}$ and $\bar{R}_{\Hsp}$ are bounded. It is then straightforward to check that they satisfy the snake identities for a duality.

Conversely, suppose that $\oplus_{rs} \Hsp_{rs}$ admits a dual $\oplus_{rs}\mathscr{G}_{rs}$ by means of duality morphisms $(R,\bar{R})$. Then the latter decompose into maps \begin{align*} R_{rs}\colon \mathbb{C}&\rightarrow \mathscr{G}_{rs}\otimes \mathscr{H}_{sr}, & \bar{R}_{rs}\colon \mathbb{C}& \rightarrow \mathscr{H}_{rs}\otimes \mathscr{G}_{sr}.\end{align*} Let us write \begin{align*} \mathcal{J}_{rs}(\xi) = (\xi^*\otimes \id)(R_{rs}(1)) \in \Hsp_{sr}, && \mathcal{I}_{rs}(\eta) = (\eta^*\otimes \id)(\bar{R}_{rs}(1)) \in \mathscr{G}_{sr}\end{align*} for $\xi \in \mathscr{G}_{rs}$ and $\eta\in \mathscr{H}_{rs}$. Then $\mathcal{J}_{rs}$ gives an anti-linear map from $\mathscr{G}_{rs}$ to $\mathscr{H}_{sr}$, and $\mathcal{I}_{rs}$ from $\mathscr{H}_{rs}$ to $\mathscr{G}_{sr}$. The snake identities \eqref{EqDualityMors} imply that $\mathcal{I}_{rs}$ is the inverse of $\mathcal{J}_{sr}$.

By the boundedness of $R$ and $\bar{R}$, we obtain that $\sup_r \sum_s \mathrm{Tr}(\mathcal{J}_{rs}^*\mathcal{J}_{rs}) = \|R\|^2$, and similarly  $\sup_r \sum_s \mathrm{Tr}(\mathcal{I}_{rs}^*\mathcal{I}_{rs}) =\|\bar{R}\|^2$. Since $\mathcal{I}_{rs} = \mathcal{J}_{sr}^{-1}$, the trace property allows us to rewrite the latter equality as $\sup_s \sum_r \mathrm{Tr}((\mathcal{J}_{rs}^*\mathcal{J}_{rs})^{-1}) = \|\bar{R}\|^2$.

Suppose now that that the condition $\sup_{r}\sum_s (\dim(\Hsp_{rs}) + \dim(\Hsp_{s r}))<\infty$ is not satisfied. Then by symmetry we may assume that there exists a sequence $r_n$ such that $\sum_s \dim(\Hsp_{r_n,s})\geq n$. This implies that we can also find $s_n$ and a strictly positive eigenvalue $\lambda$ of $\mathcal{J}_{r_n,s_n}^*\mathcal{J}_{r_n,s_n}$ such that $\lambda \leq \frac{\|R\|^2}{n}$. But as $\lambda^{-1}\leq \|\bar{R}\|^2$, this gives a contradiction.
\end{proof}

We now show that if $\cat{D}$ is a semi-simple $\cC^*$-category based on an index set $J$, then $\End(\cat{D})_\textrm{f}$ is tensor equivalent with $\cat{E}_\textrm{f}^J$.

\begin{Prop}\label{PropConc}
Let $\cat{D}$ be a semi-simple $\cC^*$-category, based on an index set $J$. Then the categories $\End(\cat{D})_\textrm{f}$ and $\cat{E}_\textrm{f}^J$ are tensor equivalent, by means of the associations
\begin{align*}
F &\mapsto \bigoplus_{(r,s)\in J\times J} \Mor(X_r, F(X_s)), &
\Hsp &\mapsto \Bigl\lbrack X\mapsto \bigoplus_{r,s\in J} \Hsp_{rs}\otimes X(s)\otimes X_r \Bigr\rbrack.
\end{align*}
\end{Prop}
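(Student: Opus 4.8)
The plan is to reduce the statement to identifications that are already in place, namely Lemma~\ref{LemEquii} (which makes $\cat{D}$ equivalent to $\Hf^J$) and the identification recorded in Section~\ref{SubSecFunctrNatTrasf} of $\cat{E}^J$ with the tensor $\cC^*$-category of $\cC^*$-endofunctors on $\Hf^J$. First I would fix, by Lemma~\ref{LemEquii}, an adjoint equivalence $E\colon\cat{D}\rightarrow\Hf^J$, $X\mapsto\bigoplus_r X(r)$, with quasi-inverse $E'\colon\mathscr{H}\mapsto\bigoplus_r\mathscr{H}_r\otimes X_r$, together with the unit and co-unit of $E\dashv E'$. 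Conjugation $F\mapsto E\circ F\circ E'$ then transports $\End(\cat{D})$ to $\End(\Hf^J)$; since conjugation by an equivalence respects composition of functors up to the canonical natural isomorphisms built from the (co)unit of $E\dashv E'$, it is a strong tensor $\cC^*$-functor for the composition tensor structures, and it is an equivalence. Composing with $\End(\Hf^J)\cong\cat{E}^J$ (under which composition of endofunctors becomes the $l^\infty(J)$-balanced tensor product and $\id_{\Hf^J}$ becomes $\mathbbm{1}_J$) yields a tensor equivalence $\Phi\colon\End(\cat{D})\rightarrow\cat{E}^J$.

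Next I would check that $\Phi$ and its quasi-inverse are the two associations in the statement. On an object $F\in\End(\cat{D})$, Proposition~\ref{PropFunctSSGrHsp} applied to $F$ itself (with $\cat{D}'=\cat{D}$, $J'=J$) shows that $\Phi(F)$ is precisely the column-finite $J\times J$-graded Hilbert space with $(r,s)$-component $\Mor(X_r,F(X_s))$; in particular it lands in $\cat{E}^J$. Running the inverse through the same dictionary, a column-finite $\Hsp$ is sent to $E'\circ F^{\Hsp}\circ E$, and unwinding the definitions of $E$, $E'$ and $F^{\Hsp}$ gives exactly $X\mapsto\bigoplus_{r,s}\Hsp_{rs}\otimes X(s)\otimes X_r$. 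On morphisms and natural transformations the correspondence is the one of Proposition~\ref{PropNatTransSSGrTrans}. That these two functors are mutually quasi-inverse, and the verification of the tensor coherence isomorphisms $\Phi(F)\otimes\Phi(G)\rightarrow\Phi(F\circ G)$ and $\mathbbm{1}_J\rightarrow\Phi(\id_{\cat{D}})$, come straight from the (co)unit of $E\dashv E'$, so no genuinely new computation is needed.

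Finally I would pass to the maximal rigid subcategories: a strong tensor $\cC^*$-functor carries objects admitting a dual to objects admitting a dual (this is how Lemma~\ref{LemRig} is argued), so the tensor equivalence $\Phi$ restricts to an equivalence $\End(\cat{D})_\textrm{f}\cong\cat{E}_\textrm{f}^J$, still compatible with the tensor structures. If one wants the concrete matching, it suffices to observe that on the $\cat{E}^J$ side $\cat{E}_\textrm{f}^J$ is the banded subcategory described in the preceding Lemma, while on the $\End(\cat{D})$ side $\End(\cat{D})_\textrm{f}$ consists of the adjointable endofunctors with uniformly bounded unit and co-unit as in Example~\ref{ExaTenso}(3); but once $\Phi$ is known to be a tensor equivalence this matching is automatic.

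The main obstacle is not a deep one: it is the bookkeeping needed to confirm that ``conjugation by an equivalence'' really is a strong tensor functor — i.e.\ that the isomorphisms $(EFE')\circ(EGE')\rightarrow E(F\circ G)E'$ assembled from the co-unit of $E\dashv E'$ satisfy the coherence axioms — together with checking that column-finiteness (respectively bandedness) is carried along in both directions. These are routine, so the proof is essentially the assembly of Lemma~\ref{LemEquii}, Propositions~\ref{PropFunctSSGrHsp} and~\ref{PropNatTransSSGrTrans}, and the tensor identification $\End(\Hf^J)\cong\cat{E}^J$.
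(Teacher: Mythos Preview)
Your proposal is correct and follows essentially the same route as the paper's own proof: the paper simply remarks that the mutually inverse tensor equivalences $\End(\cat{D})\leftrightarrow\cat{E}^J$ have already been set up via Propositions~\ref{PropFunctSSGrHsp} and~\ref{PropNatTransSSGrTrans} together with the identification $\End(\Hf^J)\cong\cat{E}^J$, and then notes that equivalences preserve duality to restrict to the rigid subcategories. Your write-up just makes the conjugation-by-$E$ step and the tensor-coherence bookkeeping explicit, which the paper leaves implicit.
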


\begin{proof} We have already remarked that there are mutually inverse tensor equivalences $\End(\cat{D})\leftrightarrow \cat{E}^J$. Since equivalences preserve duality, they restrict to equivalences between $\End(\cat{D})_\textrm{f}$ and $\cat{E}_\textrm{f}^J$.
\end{proof}

\section{Module $\cC^*$-categories and bi-graded tensor functors}

This section essentially establishes that also in the categorical set-up, there is an equivalence between modules and representations. Combined with the material of the previous sections, it allows one to present a concrete and workable version of a semi-simple module $\cC^*$-category.

\begin{Lem}\label{LemEnd}
Let $\cat{C}$ be a tensor $\cC^*$-category, and $\cat{D}$ a $\cC^*$-category. Then there is an equivalence between $\cat{C}$-module $\cC^*$-category structures
$M$ on $\cat{D}$ and strong tensor $\cC^*$-functors $F\colon \cat{C}\rightarrow \End(\cat{D})$.
\end{Lem}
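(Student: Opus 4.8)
The plan is to realize the correspondence as an instance of currying, and to exploit the fact that the tensor product on $\End(\cat{D})$ is \emph{strictly} the composition of endofunctors (Example~\ref{ExaTenso}), so that the two kinds of data match with no slack. Starting from a $\cat{C}$-module $\cC^*$-category structure $(M,\phi,e)$ on $\cat{D}$, I would set $F(U)=M(U,-)\colon\cat{D}\to\cat{D}$. Since $M$ is a bilinear $\cC^*$-functor, each $F(U)$ is a $\cC^*$-endofunctor, and for a morphism $f\in\Mor(U,V)$ the family $\bigl(M(f,\id_X)\bigr)_{X\in\cat{D}}$ is a natural transformation $F(U)\Rightarrow F(V)$. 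Because a $\cC^*$-functor restricts to a $^*$-homomorphism on each endomorphism algebra, it is automatically contractive on morphisms, so $\norm{M(f,\id_X)}^2=\norm{M(f^*f,\id_X)}\le\norm{f}^2$; hence this natural transformation is uniformly bounded and is a genuine morphism of $\End(\cat{D})$, making $F\colon\cat{C}\to\End(\cat{D})$ a $\cC^*$-functor. The associativity constraint then supplies natural unitaries $\psi_{U,V}=\phi_{U,V,-}^{*}\colon F(U)\otimes F(V)=M(U,M(V,-))\to M(U\otimes V,-)=F(U\otimes V)$, and the unit constraint supplies $c$ with $c_X=(e_X)^{*}\colon \mathbbm{1}_{\End(\cat{D})}=\id_{\cat{D}}\to M(\mathbbm{1},-)=F(\mathbbm{1})$. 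Naturality of $\psi$ in $U$ and $V$ is precisely naturality of $\phi$ in its first two arguments.

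Next I would check that the coherence axioms transport. Reading the two diagrams following Definition~\ref{DefModCat} with $X$ left as a free argument, the module ``pentagon'' becomes exactly the associativity coherence for $(F,\psi)$ (after substituting $M(W,X)$ for the third slot of $\phi$, which is where the adjoints $\phi_{\bullet}^{*}=\psi_{\bullet}$ line up correctly), and the module triangle~\eqref{EqCoh2} becomes exactly the unit coherence relating $\psi_{U,\mathbbm{1}}$, $\psi_{\mathbbm{1},U}$ and $c$; hence $(F,\psi,c)$ is a strong tensor $\cC^*$-functor. Conversely, given a strong tensor $\cC^*$-functor $(F,\psi,c)\colon\cat{C}\to\End(\cat{D})$, I would define $M(U,X)=F(U)(X)$, with $M(f,g)=(F(f))_Y\circ F(U)(g)=F(V)(g)\circ(F(f))_X$ for $f\in\Mor(U,V)$, $g\in\Mor(X,Y)$ (the two expressions agree by naturality of $F(f)$); this is a bilinear $^*$-functor $\cat{C}\times\cat{D}\to\cat{D}$. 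Setting $\phi_{U,V,X}=\bigl((\psi_{U,V})_X\bigr)^{*}$ and $e_X=(c_X)^{*}$ and running the translation of the previous step in reverse turns the tensor-functor coherences into the module coherences, so $(\cat{D},M,\phi,e)$ is a $\cat{C}$-module $\cC^*$-category.

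Finally I would verify that the two constructions are mutually inverse: starting from $(M,\phi,e)$, passing to $F(U)=M(U,-)$ and back to $M'(U,X)=F(U)(X)$ returns $(M,\phi,e)$ on the nose (the double adjoints on the unitary constraints cancel), and symmetrically on the tensor-functor side. This gives a bijective correspondence; moreover one checks directly that it matches the natural notions of morphism on the two sides (a $\cat{C}$-module functor in the sense close to Definition~\ref{DefMorMod}, respectively a natural tensor transformation between the associated strong tensor functors), which upgrades the bijection to an equivalence. I do not expect a genuine conceptual obstacle here; the only delicate point is the bookkeeping — keeping careful track of where adjoints and inverses of the unitary constraints enter when matching $\phi$ with $\psi$ and $e$ with $c$, and confirming (as above) that a $\cC^*$-functor is contractive, so that $F$ really lands in $\End(\cat{D})$ with its uniformly bounded natural transformations rather than in a larger category.
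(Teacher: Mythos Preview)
Your proposal is correct and follows precisely the approach of the paper: the paper's proof simply records the currying correspondence $M\mapsto\bigl[U\mapsto M(U,-)\bigr]$ and $F\mapsto\bigl[(U,X)\mapsto F(U)(X)\bigr]$ and declares the structural morphisms to transport ``in the obvious ways'', whereas you have written out those obvious ways in full (including the pleasant observation that contractivity of $\cC^*$-functors guarantees the uniform boundedness needed to land in $\End(\cat{D})$). There is nothing to correct.
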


\begin{proof}
For module structures $M$ and tensor functors $F$, we have the associations
\begin{align*} M &\mapsto \lbrack F_M\colon U\mapsto M(U,-)\rbrack, &
F&\mapsto \lbrack M_F\colon (U,X)\mapsto F(U)(X)\rbrack,
\end{align*}
mapping all other structural morphisms in the obvious ways. These maps are clearly inverses to each other.
\end{proof}

We can now state the following useful result.

\begin{Prop}\label{PropCorrModTen}
Let $\mathcal{C}$ be a tensor $\cC^*$-category, and let $J$ be a set. Then there is an equivalence between
\begin{enumerate}
\item module $\cC^*$-structures on $J$-based semi-simple $\cC^*$-categories, and
\item strong tensor $\cC^*$-functors $\cat{C}\rightarrow \cat{E}_\textrm{f}^J$.
\end{enumerate}
Given a module $\cC^*$-category $(\cat{D},M,\phi,e)$, the corresponding tensor functor $\cat{C}\rightarrow \cat{E}_\textrm{f}^J$ is given by
\[
F\colon U \rightarrow \bigoplus_{r,s} \Mor(X_r,M(U, X_s)).
\]
Writing the right hand side above as $\oplus_{r,s} F_{rs}(U)$, the coherence maps for tensoriality are encoded as isometries
\begin{equation}\label{EqHMatStr}
F_{r s}(U) \otimes F_{s t}(V) \rightarrow F_{r t}(U \otimes V), \quad f \otimes g \mapsto \phi_{U,V,X_t}^{*} \circ (\id_U\otimes g) \circ f,
\end{equation}
\end{Prop}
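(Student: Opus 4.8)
The plan is to realise the claimed equivalence as the composite of two equivalences already established in this appendix. Fix once and for all a $J$-based semi-simple $\cC^*$-category $\cat{D}$ (for concreteness $\cat{D}=\Hf^J$, so that $\End(\cat{D})\simeq\cat{E}^J$ by Section~\ref{SubSecFunctrNatTrasf}). By Lemma~\ref{LemEnd}, $\cat{C}$-module $\cC^*$-structures on $\cat{D}$ are the same thing as strong tensor $\cC^*$-functors $\cat{C}\to\End(\cat{D})$; and by Proposition~\ref{PropConc} the tensor $\cC^*$-categories $\End(\cat{D})_\textrm{f}$ and $\cat{E}_\textrm{f}^J$ are tensor equivalent. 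So, given a module structure on an \emph{arbitrary} $J$-based semi-simple $\cC^*$-category, I would first transport it, along the essentially unique unitary equivalence to $\cat{D}$ (cf.\ Lemma~\ref{LemEquii}), to a module structure on $\cat{D}$; then apply Lemma~\ref{LemEnd} to obtain a strong tensor functor $F_M=M(-,\ast)\colon\cat{C}\to\End(\cat{D})$; and finally compose with the tensor equivalence $\End(\cat{D})_\textrm{f}\simeq\cat{E}_\textrm{f}^J$. The reverse assignment composes a strong tensor functor $\cat{C}\to\cat{E}_\textrm{f}^J$ with the fully faithful tensor embedding $\cat{E}_\textrm{f}^J\hookrightarrow\cat{E}^J\simeq\End(\cat{D})$ and then inverts Lemma~\ref{LemEnd}.

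The one point needing care is the passage between $\End(\cat{D})$ and its rigid part $\End(\cat{D})_\textrm{f}$, since the functor $F_M$ produced by Lemma~\ref{LemEnd} a priori only lands in $\End(\cat{D})$. In the situation of interest, when $\cat{C}$ is rigid (e.g.\ $\cat{C}=\Rep(\G)$), this is settled at once by Lemma~\ref{LemRig}, which guarantees that a strong tensor functor out of a rigid category has image in the rigid part; concretely, the Frobenius isomorphism of Lemma~\ref{LemModCatFrob} exhibits $M(\bar{U},-)$ as an adjoint of $M(U,-)$ with uniformly bounded unit and counit. In the reverse direction the composite functor lands back in $\cat{E}_\textrm{f}^J$ by construction. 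Since the correspondence of Lemma~\ref{LemEnd} is a bijection, restricting it to these sub-collections yields the asserted equivalence.

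It remains to read off the explicit formulas. The object map of the tensor equivalence in Proposition~\ref{PropConc} sends $F\in\End(\cat{D})_\textrm{f}$ to $\bigoplus_{r,s}\Mor(X_r,F(X_s))$, so unwinding $M\mapsto F_M$ through it gives $U\mapsto\bigoplus_{r,s}\Mor(X_r,M(U,X_s))$, i.e.\ $F_{rs}(U)=\Mor(X_r,M(U,X_s))$ as stated. For the coherence isometries: as a tensor functor into $\End(\cat{D})$, $F_M$ carries the unitary structure maps $\psi_{U,V}=\phi_{U,V,-}^{*}\colon M(U,M(V,-))\to M(U\otimes V,-)$, the inverse of the module associator; transporting this natural unitary transformation through the tensor equivalence of Proposition~\ref{PropConc} --- which turns composition of endofunctors into the balanced tensor product and natural transformations into grading-preserving operators, cf.\ Proposition~\ref{PropNatTransSSGrTrans} --- one extracts exactly the map $f\otimes g\mapsto\phi_{U,V,X_t}^{*}\circ(\id_U\otimes g)\circ f$ of~\eqref{EqHMatStr}, where $\id_U\otimes g$ stands for $M(U,-)$ applied to $g\in\Mor(X_s,M(V,X_t))$ and $\phi_{U,V,X_t}^{*}$ is the relevant component of the inverse associator.

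I expect the genuine work here to be entirely bookkeeping rather than conceptual, since Lemmas~\ref{LemEnd}, \ref{LemRig} and Propositions~\ref{PropConc}, \ref{PropNatTransSSGrTrans} already carry the load. The error-prone step --- and the one I would spend most of the write-up on --- is checking that the various ``obvious'' identifications ($\cat{D}\simeq\Hf^J$, the $\Hf$-module structure on $\cat{D}$, and the tensor structure of $\End(\cat{D})_\textrm{f}\simeq\cat{E}_\textrm{f}^J$) compose to give \emph{precisely} the normalisation in~\eqref{EqHMatStr}, rather than a unitarily conjugate variant, and that the unit and hexagon coherence diagrams for the resulting structure maps hold automatically because the corresponding diagrams hold upstream in $\End(\cat{D})$.
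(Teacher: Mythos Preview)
Your approach is essentially the same as the paper's: combine Lemma~\ref{LemEnd} with Lemma~\ref{LemRig} to pass from module structures to strong tensor functors into $\End(\cat{D})_\textrm{f}$, then compose with the tensor equivalence of Proposition~\ref{PropConc}. The paper's proof is just these two sentences; your write-up adds the explicit unwinding of the formula~\eqref{EqHMatStr} and flags the rigidity hypothesis needed for Lemma~\ref{LemRig}, which the paper invokes without comment.
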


\begin{proof} By Lemma~\ref{LemEnd} and Lemma~\ref{LemRig}, a $\cat{C}$-module $\cC^*$-category structure on a semi-simple $\cC^*$-category $\mathcal{D}$ based on $J$ is equivalent to giving a strong tensor $\cC^*$-functor from $\cat{C}$ to $\End(\cat{D})_\textrm{f}$. Composing with the tensor equivalence from Proposition~\ref{PropConc}, we obtain the correspondence stated in the proposition.
\end{proof}

\end{document}